%% file: main.tex
\documentclass[11pt]{article}
\usepackage[T1]{fontenc}
\usepackage{multirow}
\usepackage{booktabs,longtable,xcolor}
\usepackage{adjustbox}
\usepackage{threeparttable}
\usepackage{amsmath,amsthm,amsfonts,amssymb,amscd}
\usepackage[scr]{rsfso}
\usepackage{algorithm}
\usepackage{algpseudocode}
\makeatletter
\let\STATE\State

\makeatother

\usepackage[utf8]{inputenc} 
\usepackage{array} 
\usepackage{stackrel}
 \usepackage{amsmath}
\usepackage{enumerate} 
\usepackage{color}
\usepackage{float}
\usepackage{soul}
\usepackage{subcaption}
\usepackage[color]{showkeys}
\usepackage{graphicx}
\graphicspath{{./}{figures/}}
\usepackage{mathpazo}
\usepackage{fancyhdr}
\usepackage{empheq}
\usepackage{hyperref}
\hypersetup{
  colorlinks=true,
  linkcolor=red,
  citecolor=blue,
  urlcolor=blue
}
\usepackage[left=2cm,right=2cm,top=2cm,bottom=2cm]{geometry}
\usepackage{float}
\usepackage{booktabs}

\def\XX{\mathbb{X}}
\def\YY{\mathbb{Y}}

\def\hat{\widehat}
\def\Tilde{\widetilde}
\def\Bar{\overline}
\def\la{\langle}
\def\ra{\rangle}
\def\ve{\varepsilon}
\def\B{\mathbb{B}}
\def\h{\hfill\Box}
\def\R{\mathbb{R}}
\def\ox{\bar{x}}

\def\oz{\bar{z}}

\def\ov{\bar{v}}

\def\conv{\mbox{\rm conv}\,}
\def\cone{\mbox{\rm cone}\,}
\def\ri{\mbox{\rm ri}\,}

\def\Im{\mbox{\rm Im}\,}
\def\para{\mbox{\rm par}\,}
\def\gph{\mbox{\rm gph}\,}
\def\epi{\mbox{\rm epi}\,}

\def\Span{\mbox{\rm span}\,}
\def\aff{\mbox{\rm aff}\,}

\def\dom{\mbox{\rm dom}\,}
\def\Ker{\mbox{\rm Ker}\,}

\def\cl*co{\mbox{\rm cl}^*\mbox{\rm co}\,}

\def\cl{\mbox{\rm cl}\,}
\def\d{\mbox{\rm dist}}

\def\h{\hfill\triangle}

\def\oR{\Bar{\R}}

\def\al{\alpha}

\def\N{\mathbb{N}}

\def\hs7{\hspace*{7pt}}

\def\Id{\mathbb{I}}

\DeclareMathOperator{\prox}{\mathbf{prox}}
\renewcommand{\theequation}{\thesection.\arabic{equation}}

\def\h{\hfill\Box}
\def\kk{\kappa}
\DeclareMathOperator*{\argmin}{arg\,min}
\usepackage{url}
\begin{document}

\newtheorem{Theorem}{Theorem}[section]
\newtheorem{Assumption}[Theorem]{Assumption}
\newtheorem{Conjecture}[Theorem]{Conjecture}
\newtheorem{Proposition}[Theorem]{Proposition}
\newtheorem{Remark}[Theorem]{Remark}
\newtheorem{Lemma}[Theorem]{Lemma}
\newtheorem{Corollary}[Theorem]{Corollary}
\newtheorem{Definition}[Theorem]{Definition}
\newtheorem{Example}[Theorem]{Example}
\newtheorem{Fact}[Theorem]{Fact}
\newtheorem*{pf}{Proof}
\renewcommand{\theequation}
{\thesection.\arabic{equation}}
\normalsize
\normalfont
\medskip
\def\endproof{$\h$\vspace*{0.1in}}

\title{\bf A Globally Convergent Newton Method
for Nonsmooth Optimization
in the absence of Smooth Surrogates}
\date{}
\author{TRAN T. A. NGHIA\footnote{Department of Mathematics and Statistics, Oakland University, Rochester, MI 48309, USA; email: nttran@oakland.edu}
\and NGHIA V. VO\footnote{Department of Mathematics and Statistics, Oakland University, Rochester, MI 48309, USA; email: nghiavo@oakland.edu}\and  KHOA V. H. VU\footnote{Department of Electrical and Computer Engineering, Oakland University, Rochester, MI 48309, USA; email: khoavu@oakland.edu} }

\maketitle
\vspace{-0.2in}
\begin{abstract}
We propose a globally convergent Newton method for convex composite optimization problems with polyhedral regularizers. The method rests on the observation that the proximal gradient step yields not only a new point but also a subgradient, which determines the face on which the linearization of the regularizer is exact. Minimizing a second-order model of the objective over this face produces a Newton direction that involves no generalized second-order information about the regularizer and is a descent direction for the objective itself. The method is therefore globalized by an Armijo line search on the original objective which, unlike forward--backward envelope techniques, requires only evaluations of the smooth component. We establish global convergence and local quadratic convergence to tilt-stable minimizers without any nondegeneracy condition, and show that the unit step is always accepted when the smooth component is quadratic, which yields a line-search-free method. The subproblems require only projections onto faces, which are explicit for the Lasso and nonnegative least squares and computable in linear time for the fused Lasso. Numerical experiments show that the method compares favorably with state-of-the-art solvers.

\end{abstract}

\noindent{\bf Mathematics Subject Classification (2020)}. 49J52, 49J53, 49M15, 65K10, 90C25
\vspace{0.1in}

\noindent{\bf Keywords:} Nonsmooth Newton methods, Polyhedral regularization, Generalized Hessian, Variational Analysis, Convex Optimization 

\section{Introduction}\label{sec:intro}

In this paper, we develop a second-order method for solving the composite optimization problem
\begin{equation}\label{eq:intro-CP}
\min_{x\in\mathbb{X}}\ \varphi(x):=f(x)+g(x),
\end{equation}
where $\mathbb{X}$ is a Euclidean space, $f:\mathbb{X}\to\mathbb{R}$ is a convex, twice continuously differentiable function with a Lipschitz continuous gradient, and $g:\mathbb{X}\to\overline{\mathbb{R}}:=\mathbb{R}\cup\{+\infty\}$ is a proper convex polyhedral function. Problems of this form arise throughout statistics, signal and image processing, and machine learning, where a smooth data-fidelity term is combined with a regularizer to promote a desired structure of the solution.

Polyhedral regularizers are among the most popular choices when the solution is expected to be sparse, piecewise constant, or grouped by magnitude. A classical example is the Least Absolute Shrinkage and Selection Operator (Lasso)~\cite{Tibshirani96}, in which $g$ is the $\ell_1$ norm. To promote piecewise constancy as well, the fused Lasso~\cite{Tibshirani05} adds the one-dimensional total variation seminorm, which penalizes differences between consecutive components. To cluster coefficients by magnitude, the Octagonal Shrinkage and Clustering Algorithm for Regression (OSCAR)~\cite{Bondell08} and Sorted-$\ell_1$ Penalized Estimation (SLOPE)~\cite{Bogdan2015} penalize the coefficients according to their sorted absolute values.

There are numerous iterative strategies for solving~\eqref{eq:intro-CP}, which can broadly be divided into first-order and second-order methods. First-order methods are the most widely used. Examples include ISTA and FISTA~\cite{BeckTeboulle09}, the alternating direction method of multipliers~\cite{Boyd11}, and primal--dual splitting~\cite{ChambollePock11}. However, first-order methods converge at best sublinearly in general, with the optimal worst-case rate $O(1/k^2)$ for the objective values~\cite{NemirovskiYudin83,Nesterov18}, 
and linearly under quadratic growth~\cite{drusvyatskiy18}. They may therefore be slow when high accuracy is required, which motivates the development of fast second-order methods.

Extending Newton's method to~\eqref{eq:intro-CP}, however, raises two difficulties. The first concerns the model. At each iteration, Newton's method minimizes a quadratic approximation of the objective built from its Hessian. Because $g$ is nondifferentiable, it is not clear which second-order information about $g$ should enter the model. The second concerns globalization. For smooth problems, a Newton direction computed with a positive definite Hessian, or an approximation of it, is a descent direction, and a line search on the objective guarantees global convergence. For composite problems, this guarantee no longer holds and thus globalization mechanism is then required for convergence.

Proximal Newton-type methods, such as those of Lee, Sun, and Saunders~\cite{LeeSunSaunders14} and of Byrd, Nocedal, and Oztoprak~\cite{byrd2016inexact}, overcome both difficulties by approximating only $f$ and keeping $g$ intact. At each iteration, they solve
\begin{equation}\label{eq:intro-PN}
\min_{x\in\mathbb{X}}\Big\{\langle\nabla f(x_k),x-x_k\rangle+\frac12\langle H_k(x-x_k),x-x_k\rangle+g(x)\Big\},
\end{equation}
where $H_k$ is the Hessian of $f$ at $x_k$ or an approximation of it. This model requires no second-order information about $g$. Moreover, since $g$ enters~\eqref{eq:intro-PN} exactly, the step from $x_k$ to a solution of the subproblem is a descent direction for $\varphi$ itself, and a line search on the original objective globalizes the method. Mordukhovich, Yuan, Zeng, and Zhang~\cite{mordukhovich2023globally} recently established global convergence and superlinear rates for this approach under subregularity-type conditions. The price is that each subproblem~\eqref{eq:intro-PN} is itself a nonsmooth composite problem of the same type as~\eqref{eq:intro-CP}, and solving it iteratively accounts for most of the computational cost.

A second family of methods avoids this cost by applying Newton's method directly to the optimality condition $0\in\nabla f(x)+\partial g(x)$, reformulated as a nonsmooth equation through the proximal mapping of $g$. Each iteration then reduces to solving a linear system. Semismooth Newton methods~\cite{QiSun93,FacchineiPang03} build this system from an element of the Clarke generalized Jacobian of the proximal mapping and converge locally superlinearly under suitable regularity conditions. Combined with an augmented Lagrangian method applied to the dual problem, this approach underlies the highly efficient SSNAL solvers of Li, Sun, and Toh~\cite{LST18,LST18Fused}. Its effectiveness, however, hinges on computing generalized Jacobian elements, which requires a problem-specific derivation that can be delicate for structured regularizers such as the total variation seminorm.

A different line of work, rooted in second-order variational analysis, replaces the Hessian of $g$ by its generalized Hessian, namely the coderivative of $\partial g$~\cite{Mordukhovich80,Mordukhovich06}. The positive definiteness of the generalized Hessian characterizes tilt-stable minimizers in the sense of Poliquin and Rockafellar~\cite{PoliquinRock98}. This characterization underlies the coderivative-based Newton methods of Mordukhovich and Sarabi~\cite{MordukhovichSarabi21} and of Khanh, Mordukhovich, Phat, and Tran~\cite{KhanhMordukhovichPhat24,KhanhMordukhovichPhat25}. Closely related are the semismooth$^*$ and SCD Newton methods of Gfrerer and Outrata~\cite{GfrererSIOPT21,GfrererCOA25,Gfrerer25}. These methods converge locally superlinearly under tilt stability or metric regularity, although computing generalized Hessians or subspace-containing derivatives of complex regularizers remains a challenge in practice. Since their directions are computed from second-order information alone, globalization requires an additional device. The prevailing choice is the forward--backward envelope~\cite{P13,S17,T18}, a continuously differentiable function sharing its minimizers with $\varphi$ on which a line search can be performed. This envelope is powerful and extends to nonconvex and nonpolyhedral problems. Using it as a merit function in the design of optimization methods, however, may come at a higher computational cost; see Remark~\ref{rem:FBE_comparison}. 

\newpage
A third line of work develops Newton methods on manifolds. When $g$ is partly smooth in the sense of Lewis~\cite{Lewis02}, it is smooth along an active manifold, which the proximal gradient method identifies in finitely many iterations provided that the nondegeneracy condition $-\nabla f(\bar x)\in\operatorname{ri}\partial g(\bar x)$ holds at the minimizer $\bar x$~\cite{HareLewis04,LiangFadiliPeyre17}. Lewis and Wylie~\cite{LewisWylie2021} and Bareilles, Iutzeler, and Malick~\cite{BIM23} exploit this property by switching to Newton steps on the identified manifold, where $\varphi$ is smooth and classical line searches apply. Nondegeneracy, however, involves the unknown minimizer and thus cannot be verified in advance; when it fails, the performance of identification-based methods may deteriorate significantly~\cite{FMP18}.

In summary, existing Newton-type methods for~\eqref{eq:intro-CP} must compromise between the cost of their subproblems, the nature of their globalization, and the assumptions underlying their fast local convergence. Semismooth and coderivative-based Newton methods solve linear systems and converge superlinearly, but the latter are typically globalized through the forward--backward envelope. Manifold-based methods converge quadratically and perform a line search on $\varphi$ along the identified manifold, but identification requires nondegeneracy. Proximal Newton methods perform a line search on $\varphi$ itself, at the price of a nonsmooth subproblem at every iteration. This raises the following question:
\begin{center}
\emph{Can a Newton-type method for~\eqref{eq:intro-CP} avoid nonsmooth composite subproblems, converge globally without a smooth surrogate, and converge quadratically without any nondegeneracy assumption?}
\end{center}

\medskip
\noindent\textbf{Our Contributions.} In this paper, we answer this question affirmatively for polyhedral regularizers. Our approach rests on a simple observation about the proximal gradient step. Given $x_k$, let
\[
y_k:=\operatorname{prox}_{\alpha g}\big(x_k-\alpha\nabla f(x_k)\big)\quad\text{and}\quad z_k:=\frac{x_k-y_k}{\alpha}-\nabla f(x_k).
\]
The pair $(y_k,z_k)$ lies on the graph of $\partial g$ and therefore carries more information than the point $y_k$ alone: the subgradient $z_k$ defines an affine minorant $x\mapsto g(y_k)+\langle z_k,x-y_k\rangle$ of $g$ that touches $g$ at $y_k$. The crucial fact is that the set on which this minorant touches $g$ is known explicitly. Indeed, the Fenchel--Young identity yields
\begin{equation}\label{eq:intro-exact}
g(x)=g(y_k)+\langle z_k,x-y_k\rangle\quad\Longleftrightarrow\quad x\in\partial g^*(z_k)=\big\{x\in\mathbb{X}\ \big|\ z_k\in\partial g(x)\big\},
\end{equation}
while outside $\partial g^*(z_k)$ the minorant lies strictly below $g$. In other words, the subdifferential $\partial g^*(z_k)$ is the largest region on which the linearization of $g$ at $y_k$ is exact, and on this region the nonsmoothness of $\varphi$ disappears entirely, $\varphi$ being the sum of $f$ and an affine function. Since $g$ is polyhedral, this region is a polyhedron determined explicitly by $z_k$, which we call the face selected by $z_k$.

It is then natural to take the Newton step for $\varphi$ within this face:
\begin{equation}\label{eq:intro-sub}
\widehat{x}_k\in\operatorname*{argmin}_{x\in\partial g^*(z_k)}\ \langle\nabla f(y_k)+z_k,\,x-y_k\rangle+\frac12\langle H_k(x-y_k),x-y_k\rangle ,
\end{equation}
where $H_k$ is $\nabla^2 f(y_k)$ or a positive definite regularization of it. In view of~\eqref{eq:intro-exact}, problem~\eqref{eq:intro-sub} and the proximal Newton subproblem~\eqref{eq:intro-PN} centered at $y_k$ have identical objectives on the face, up to an additive constant. Problem~\eqref{eq:intro-sub} thus retains the exact model of $g$ used by proximal Newton methods while discarding precisely the region where this model is nonsmooth; it is a quadratic program over a polyhedron, free of any generalized second-order object of $g$. Nor is the restriction a loss near a solution. Let $\bar x$ be a minimizer of~\eqref{eq:intro-CP} and set $\bar z:=-\nabla f(\bar x)\in\partial g(\bar x)$. Since $g$ is polyhedral, the graph of $\partial g$ coincides, near $(\bar x,\bar z)$, with a translate of a finite union of polyhedral cones \cite{MordukhovichSarabi16}. This local conic structure yields some $\varepsilon>0$ such that
\begin{equation}\label{eq:intro-face}
\bar x\in\partial g^*(z)\subset\partial g^*(\bar z)\qquad\text{for all } (y,z)\in\operatorname{gph}\partial g\cap\big(\mathbb{B}_\varepsilon(\bar x)\times\mathbb{B}_\varepsilon(\bar z)\big).
\end{equation}
Hence, as soon as $(y_k,z_k)$ is close to $(\bar x,\bar z)$, the face $\partial g^*(z_k)$ contains the solution, and~\eqref{eq:intro-sub} becomes a Newton step for a smooth problem over a polyhedron containing $\bar x$. The curvature needed for such a step is supplied by tilt stability, which for~\eqref{eq:intro-CP} implies that 
\begin{equation}\label{eq:intro-tilt}
\langle\nabla^2 f(\bar x)w,w\rangle>0\qquad\text{for all } w\in\operatorname{par}\partial g^*(\bar z)\setminus\{0\},
\end{equation}
where $\mathrm{par}\partial g^* (\bar{z})$ is the parallel subspace of $\partial g^* (\bar{z})$ in $\XX$; see \cite[Theorem 4.3]{CuiHoheiseiNghiaSun24}. By~\eqref{eq:intro-face}, every direction within $\partial g^*(z_k)$ lies in $\operatorname{par}\partial g^*(\bar z)$, so~\eqref{eq:intro-tilt} provides uniform curvature of $f$ along the faces visited by the method. Together, \eqref{eq:intro-face} and~\eqref{eq:intro-tilt} drive the \emph{local quadratic convergence} of the method. Moreover, when $f$ is quadratic, the Newton step over $\partial g^*(z_k)$ is exact: once the selected face contains $\bar x$, the next iterate is $\bar x$ itself, and the method achieves \emph{finite termination}. Neither property requires $-\nabla f(\bar x)$ to lie in the relative interior of $\partial g(\bar x)$. To illustrate this, consider the problem
\begin{equation}\label{eq:intro-ex}
\min_{x\in\mathbb{R}^2}\ \varphi(x):=\frac12\,x^\top\begin{pmatrix}2&3\\3&5\end{pmatrix}x+\begin{pmatrix}-4\\-1\end{pmatrix}^{\!\top}x+2\|x\|_1,
\end{equation}
whose unique minimizer is $\bar x=(1,0)$. Here $f$ is quadratic and $-\nabla f(\bar x)=(2,-2)$, which lies on the relative boundary of $\partial(2\|\cdot\|_1)(\bar x)=\{2\}\times[-2,2]$. Thus the nondegeneracy condition fails at $\bar x$, although $\bar x$ is tilt stable since the quadratic form is positive definite. The left plot of Figure~\ref{fig:intro} shows the iterates of several methods started from $x_0=(-4,-2)$. GESN reaches $\bar x$ in a single iteration, in accordance with the finite termination property, whereas ZeroFPR \cite{T18}, SSNAL \cite{LST18}, GSSN \cite{GfrererCOA25}, ISTA, and FISTA \cite{BeckTeboulle09} require considerably more iterations. This behavior does not depend on the particular starting point. Over 225 different starting points, the median number of iterations of GESN remains the lowest among all methods, as shown in the right plot of Figure~\ref{fig:intro}. This brings us to the question of globalization.

\begin{figure}[H]
  \centering
  \begin{subfigure}[t]{0.559\linewidth}
    \centering
    \includegraphics[width=\linewidth]{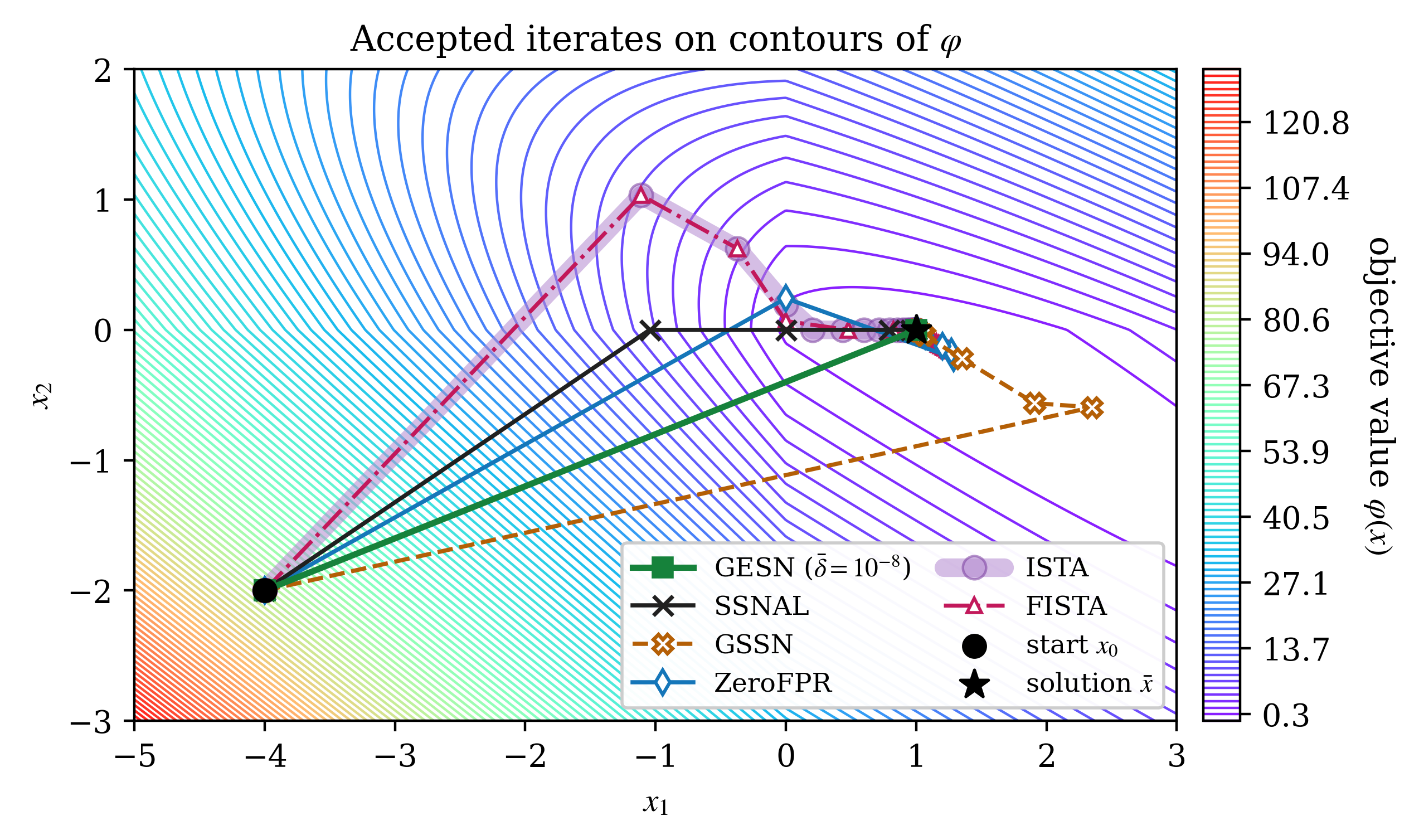}
  \end{subfigure}\hfill
  \begin{subfigure}[t]{0.421\linewidth}
    \centering
    \includegraphics[width=\linewidth]{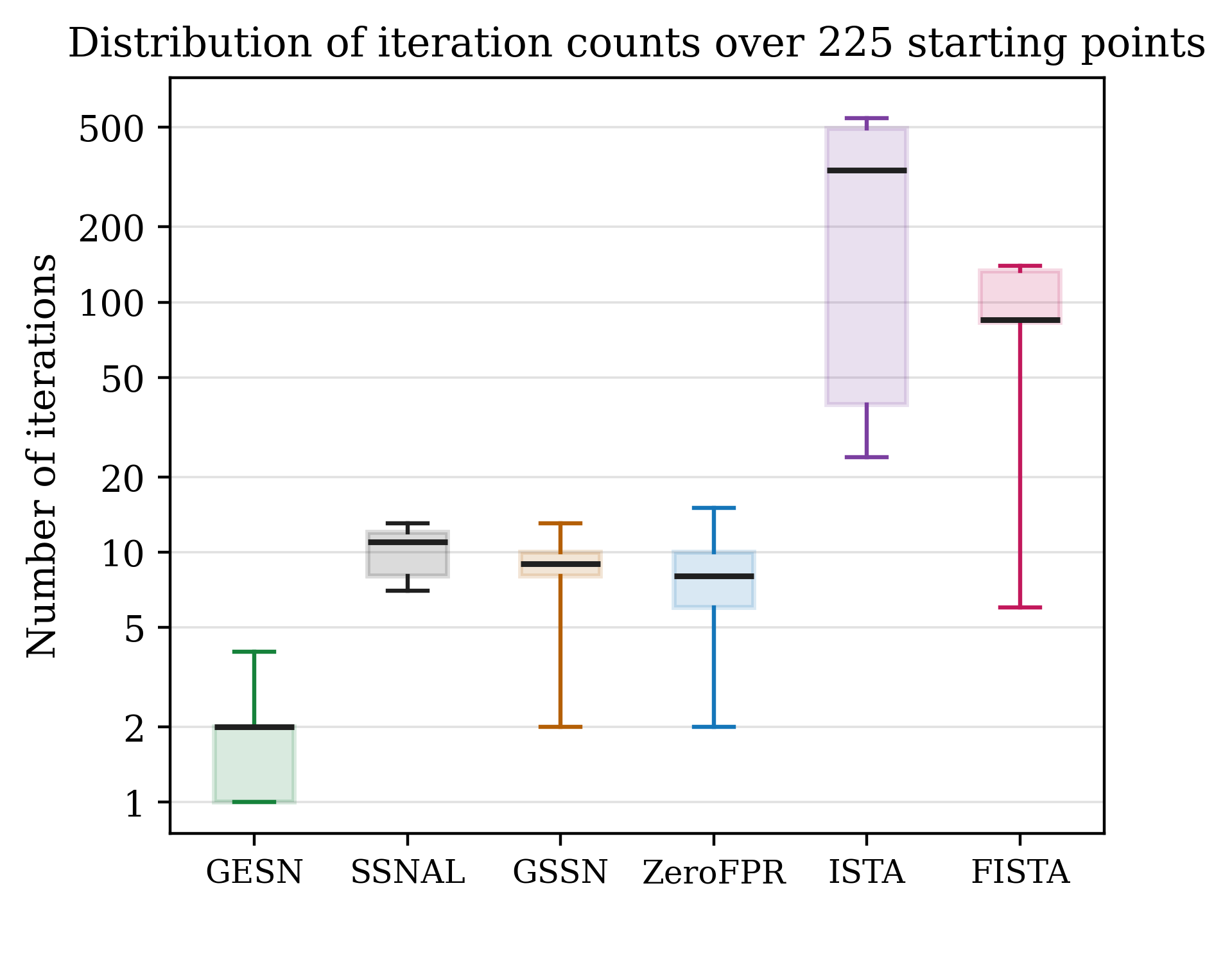}
  \end{subfigure}
  \caption{Left: Trajectories from $x_0=(-4,-2)$ of GESN (ours) versus SSNAL\cite{LST18}, GSSN \cite{GfrererCOA25},
  ZeroFPR \cite{T18}, ISTA and FISTA \cite{BeckTeboulle09} on problem~\eqref{eq:intro-ex}. Right: Distribution of the number of iterations until convergence for
  $225$ starting points sampled on a uniform $15\times15$ grid over
  $[-3.75,6.25]\times[-4.75,5.25]$.}
  \label{fig:intro}
\end{figure}

The face restriction also settles this question. Since $y_k$ and $\widehat x_k$ both lie in $\partial g^*(z_k)$, on which $g$ is affine as mentioned in \eqref{eq:intro-exact}, $\varphi$ behaves along the segment joining them exactly as a smooth function does. Combining this observation with the optimality condition of~\eqref{eq:intro-sub} gives, for $d_k:=y_k-\widehat x_k$, a nonsmooth counterpart of the descent identity of the classical Newton method:
\begin{equation}\label{eq:intro-descent}
\varphi'(y_k;-d_k)=-\langle\nabla f(y_k)+z_k,\,d_k\rangle\le-\langle H_k d_k,d_k\rangle .
\end{equation}
Consequently, whenever $H_k$ is positive definite, $-d_k$ is a descent direction for $\varphi$ itself, and the Armijo condition for $\varphi$ along it reduces to a test involving $f$ only:
\begin{equation}\label{eq:intro-LS}
f(y_k-\beta d_k)\le f(y_k)+\beta\langle z_k,d_k\rangle-\sigma\beta\langle z_k+\nabla f(y_k),d_k\rangle .
\end{equation}
Here, the contributions are twofold. First, the proposed approach dispenses
with the forward-backward envelope and its associated parameter,
thereby allowing a range of proximal gradient stepsizes twice as
large as that permitted by the coderivative-based
\cite{KhanhMordukhovichPhat25}. Second, the proposed
line-search condition reduces the computational cost of backtracking,
as each trial requires only a reevaluation of $f$; see
Remark~\ref{rem:FBE_comparison}. For quadratic $f$, GESN further
dispenses with backtracking altogether: the unit stepsize is accepted
at every iteration.

The rest of the paper is organized as follows. Section~\ref{sec:prelim} recalls preliminaries from convex and variational analysis. Section~\ref{sec:polyhedral} introduces the face-restricted Newton method and establishes its local quadratic convergence, while Section~\ref{sec:global} develops GESN and its global and local convergence theory. Section~\ref{sec:efficient_implementation} is devoted to the subproblem solver, Section~\ref{sec:numerical} reports numerical experiments, and Section~\ref{sec:conclude} concludes the paper.
\section{Preliminaries}\label{sec:prelim}

Throughout this paper, we denote by $\XX$ a finite-dimensional Euclidean space endowed with the inner product $\la\cdot,\cdot\ra$ and the corresponding Euclidean norm $\|\cdot\|$. For a subspace $S \subseteq \XX$, the symbol $P_{S}$ stands for the {\em orthogonal projection} onto $S$ and $S^{\bot}$ is the {\em orthogonal complement} of $S$. Given a nonempty set $C\subseteq \XX$, the {\em span}, {\em affine hull}, {\em convex conic hull}, {\em convex hull}, {\em closure} and {\em relative interior} of $C$ are denoted, respectively, by $\Span C$, $\aff C$, $\cone C$, $\conv C$, $\cl C$ and $\ri C$.  For a linear operator $A:\XX\to \YY$, we write $\Ker A$ and $\Im A$, respectively, for the {\em kernel/null space} and {\em image/range} of $A$. The set $\mathbb{B}_{\varepsilon}(\bar{x})$ is  the {\em closed ball} in $\XX$ centered at $\bar{x}\in \XX$ with radius $\varepsilon>0$. 
\subsection{Basic definitions and preliminaries}
Let $C$ be  a closed convex set in $\XX$. The {\em support function} of $C$ is given by 
\begin{equation}\label{eq:support}
\sigma_C(v) := \sup_{x\in C}\,\la v,x \ra \quad \forall v\in \XX.
\end{equation}
The {\em normal cone} to $C$ at $x \in C$ is defined as 
\begin{equation}\label{def:N}
N_{C}(x) := \{v \in \XX \mid \la v,y - x\ra \le 0, \forall y \in C  \}.
\end{equation}
The {\em parallel subspace} of $C\subseteq \XX$ is given by 
\begin{equation}\label{def:Par}
 {\rm par}\, C:= \Span(C - x) \quad \text{ for all }\quad  x\in C.   
\end{equation}
Note that the parallel subspace of any convex cone $K\subset \XX$ can be calculated as ${\rm par}\, K = \Span K = K - K.$

For a proper, lower semi-continuous, convex function $\varphi:\XX\to \oR$ with $\,\,\oR:=\R\cup\{+\infty\}$, the {\em effective domain} and the {\em epigraph} of $\varphi$ are defined, respectively, by
\[
\dom \varphi:= \{x \in \XX \mid \varphi(x) < \infty \} \quad \text{and} \quad \epi \varphi:= \{(x,t) \in \XX \times \R \mid \varphi(x) \le t \}.
\]
The \emph{convex subdifferential} of $\varphi$ at $x\in \dom\varphi$ is defined by
\begin{equation}\label{def:convex-subdiff}
\partial \varphi(x):=\big\{v\in \XX \mid \varphi(y)\ge \varphi(x)+\la v,y-x\ra,\ \forall y\in \XX\big\}.
\end{equation}
The normal cone $N_C(x)$ in \eqref{def:N} is indeed the subdifferential \eqref{def:convex-subdiff} of the {\em indicator function} $\delta_{C}(\cdot)$ at $x$, which is defined by $\delta_{C}(x) = 0$ whenever $x \in C$, and $\infty$ otherwise.

A proper function $f:\XX\to\overline{\R}$ is {\em strongly convex} if there is a constant $\sigma >0$ such that for all $x,y \in \XX$ and $\lambda \in[0,1]$, we have
\[
f((1-\lambda)x+\lambda y) \leq (1-\lambda)f(x) + \lambda f(y) -\dfrac{\sigma}{2}\lambda(1-\lambda)\|x-y\|^2.
\]
When $f$ is strongly convex with constant $\sigma >0$, we say $f$ is $\sigma$-strongly convex and $\sigma$ is the modulus of strong convexity. If, in addition, $f$ is twice differentiable, it is well-known that $f$ is $\sigma$-strongly convex if and only if $$\la\nabla^2 f(x)w,w\ra \geq \sigma\|w\|^2 \quad \text{for all} \quad x,w \in \XX.$$
When a proper, l.s.c function $f$ is strongly convex, it admits a unique global minimizer; see, e.g., \cite[Theorem~5.25]{Beck-1storder}.

The \emph{Legendre-Fenchel conjugate} of $\varphi$ is the convex function $\varphi^{*}: \XX \to \Bar{\R}$ defined by
\[
\varphi^*(v) := \sup_{x\in \XX}\, \{\la v, x\ra - \varphi(x)\}\ \text{ for all }\ v\in \XX.
\]
When $\varphi(x)=\delta_C(x)$ is the indicator function to a convex set $C$, its conjugate $\delta_C^*$ is exactly the support function $\sigma_C$ \eqref{eq:support} to $C$. 

A fundamental result in Convex Analysis is the Fenchel-Young's identity occasionally used in our paper, which states that 
\begin{equation}\label{def:fenyou}
    v \in \partial \varphi(x) \quad \text{if and only if} \quad \varphi(x) + \varphi^{*}(v) = \la v,x \ra .
\end{equation} 

Let us next recall the definition of {\em proximal operator} of a convex function $\varphi$, being defined as
\begin{equation}\label{def:prox}
\prox_{\alpha \varphi}(x) := \argmin_{y\in \XX}\left\{\varphi(y) + \frac{1}{2\alpha}\|y-x\|^2 \right\} \quad \forall\, x\in \XX, \forall\, \al>0.
\end{equation}
The proximal mapping $\prox_{\alpha \varphi}:\XX\to \XX$ associated with a convex function $\varphi$ is a single-valued mapping with full domain. Moreover, it can be expressed equivalently as the {\em resolvent operator} $(\Id + \alpha\partial \varphi)^{-1}$, where $\Id:\XX\to \XX$ is the {\em identity operator}. By the first-order optimality condition, we have
\begin{equation}\label{eq:res}
y=\prox_{\alpha\varphi}(x)
\quad\text{if and only if}\quad
\frac{x-y}{\alpha}\in \partial\varphi(y).
\end{equation}

The (one-sided) {\em directional derivative} of an extended-real-valued function $\varphi: \XX \to \overline{\R}$ at the point $\ox\in \dom \varphi$ in the direction $d\in \XX$ is the limit 
\begin{equation}\label{eq:direc-deri}
    \varphi' (\ox;d):= \lim_{\beta\to 0^+} \dfrac{\varphi (\ox+\beta d) - \varphi (\ox)}{\beta},
\end{equation}
if the limit exists as either a real number or $\pm \infty$. In the case $\varphi$ is convex, the directional derivative $\varphi' (\ox;d)$ exists in every direction $d\in \XX$.




Finally, we recall, for completeness, several notions of
convergence rates.

\begin{Definition}[Rates of convergence]
Let $\{x^k\}\subset\mathbb{R}^n$ be a sequence converging to $\bar{x}$ as
$k\to\infty$, with $x^k\neq\bar{x}$ for all $k\in\mathbb{N}$. The convergence
rate is said to be:
\begin{enumerate}[\bf (i)]
    \item \textup{R-linear} if
    \[
        \limsup_{k\to\infty}
        \bigl(\|x^k-\bar{x}\|\bigr)^{1/k}
        <1,
    \]
    i.e., there exist $\mu\in(0,1)$, $c>0$, and $k_0\in\mathbb{N}$ such that $\|x^k-\bar{x}\| 
        \leq c\mu^k$ for all $k\geq k_0$.

    \item \textup{Q-linear} if
    \[
        \limsup_{k\to\infty}
        \frac{\|x^{k+1}-\bar{x}\|}
             {\|x^k-\bar{x}\|}
        <1,
    \]
    i.e., there exist $\mu\in(0,1)$ and $k_0\in\mathbb{N}$ such that $\|x^{k+1}-\bar{x}\|
        \leq \mu\|x^k-\bar{x}\|$ for all $k\geq k_0$

    \item \textup{Q-quadratic} if
    \[
        \limsup_{k\to\infty}
        \frac{\|x^{k+1}-\bar{x}\|}
             {\|x^k-\bar{x}\|^2}
        <\infty,
    \]
    i.e., there exist $c>0$ and $k_0\in\mathbb{N}$ such that $ \|x^{k+1}-\bar{x}\|
        \leq c\|x^k-\bar{x}\|^2$ for all $k\geq k_0.$
\end{enumerate}
\end{Definition}


\subsection{Tools from Variational Analysis}
In this section, we mainly follow the notations from the classical monographs \cite{Mordukhovich06,Mordukhovich24,Rockafellar98} on Variational Analysis. Given a closed (possibly nonconvex) set $\Omega\subset \XX$ and $\ox\in \Omega$. The {\em regular normal cone} (a.k.a. Fr\'echet normal cone) to $\Omega$ at $\ox$ is defined by 
\begin{equation}\label{def:Frechet-cone}
    \widehat{N}_\Omega (\ox):=\left\{v\in \XX\;\Bigg|\;\limsup_{x\xrightarrow{\Omega}\ox}\dfrac{\la v, x - \ox\ra}{\|x-\ox\|} \le 0\right\},
\end{equation}
where $x\xrightarrow{\Omega}\ox$ means that $x\in \Omega$ and $x\to \ox$. The {\em limiting normal cone}  (a.k.a. Mordukhovich normal cone) to $\Omega$ at $\ox\in \Omega$ is given by 
\begin{equation}\label{def:limit-cone}
    N_{\Omega}(\ox) := \big\{v\in \XX \mid \exists\,  \{x_k\}\xrightarrow{\Omega}\ox, \{v_k\}\to v \text{ such that }v_k \in \widehat{N}_{\Omega}(x_k)\; \forall k\in \N\big\}.
\end{equation}
Both the regular and limiting normal cones return exactly to the convex normal cone introduced in \eqref{def:N} when the set $\Omega$ is convex, but they are different in general.

For a set-valued mapping $F: \mathbb{X} \rightrightarrows \mathbb{X}$ from $\XX$ to itself, we associate with it the {\em graph} given by
\[
\gph F := \{(x,y) \in \mathbb{X} \times \mathbb{X} \mid y\in F(x)\}.
\]
The {\em limiting coderivative} of a set-valued mapping $F:\XX \rightrightarrows \XX$ at $(\ox,\ov)\in \gph F$ is defined as
\begin{equation}\label{def:limit-code}
D^{*}F(\ox|\,\ov)(w) := \{z \in \XX \mid (z,-w) \in N_{\mathrm{gph}\,F}(\ox,\ov) \} \quad \text{for all} \quad w \in \XX.
\end{equation}
When $F$ is the convex subdifferential mapping $\partial \varphi$ and $\ov\in \partial \varphi(\ox)$, the limiting coderivative of $\partial \varphi$ at $\ox$ for $\ov$ is known as the {\em generalized Hessian} (a.k.a. {\em second-order limiting subdifferential}) \cite{Boris92,Mordukhovich06,PoliquinRock98} of $\varphi$ at $\ox$ for $\ov$ denoted by:
\begin{equation}\label{def:secondsub}
\partial^{2}\varphi(\ox|\,\ov)(w) := (D^{*}\partial \varphi)(\ox|\,\ov)(w) \quad \text{for all} \quad w \in \XX,
\end{equation}
which plays an important role in our paper and several nonsmooth Newton methods developed recently in \cite{KhanhMordukhovichPhat25,KhanhMordukhovichPhat24,MordukhovichSarabi21}. It is worth noting that if $\varphi$ is convex and twice continuously differentiable around $\ox$ with $\ov = \nabla \varphi(\ox)$, then its generalized Hessian reduces to the regular Hessian mapping
\[
\partial^{2}\varphi(\ox|\, \ov)(w) =  \nabla^2 \varphi(\ox)w \quad \text{for all} \quad w \in \XX.
\]

If $\varphi$ is not twice differentiable, designing Newton methods is a huge challenge, as the regular Hessian does not exist anymore. Naturally, replacing the Hessian by the generalized Hessian \eqref{def:secondsub} is a possibility; see, e.g., \cite{MordukhovichSarabi21,Mordukhovich24,KhanhMordukhovichPhat24,KhanhMordukhovichPhat25}. This actually leads us to the definition of {\em tilt stability} at a minimizer $\ox$ introduced by Poliquin and Rockafellar in \cite{PoliquinRock98}, which is shown to be equivalent to the positive definiteness of $\partial^2 \varphi(\ox|\,0)$; see \cite[Theorem~1.3]{PoliquinRock98} or  Theorem~\ref{theo:Tilt} below. 

\begin{Definition}[Tilt stability]\label{Den:Tilt} Given an l.s.c. convex function $\varphi:\mathbb{X} \to \Bar{\R}$, a point $\ox \in \dom \varphi$ is called a \emph{tilt-stable} minimizer of $\varphi$ if there exists $\gamma > 0$ such that the mapping 
\begin{equation*}
M_{\gamma}(v) := \argmin\{\varphi(x) - \la v,x \ra \mid x \in \B_{\gamma}(\ox)\} \quad \text{for } \quad v \in \XX
\end{equation*}
is single-valued and Lipschitz continuous on some neighborhood of $0 \in \XX$ with $M_{\gamma}(0) = \{\ox\}.$
\end{Definition}

By convexity of $\varphi$, together with the property $M_{\gamma}(0)=\{\ox\}$, it follows immediately that any tilt-stable minimizer $\ox$ is necessarily the unique global minimizer of $\varphi$. Accordingly, we refer to $\bar{x}$ as the tilt-stable minimizer of $\varphi$.

Tilt stability also lies behind the theory of nonsmooth Newton methods developed recently in \cite{MordukhovichSarabi21, KhanhMordukhovichPhat24,GfrererCOA25}. The following theorem, a direct consequence of \cite[Theorem~3.5]{BorisNghia15} and \cite[Example~13.30]{Rockafellar98}, gives a complete characterization of tilt stability and will be useful for our later analysis.  

\begin{Theorem}{\rm (A characterization of tilt stability via generalized Hessian)}\label{theo:Tilt} Consider an l.s.c. convex function $\varphi:\mathbb{X}\to \overline{\R}$ with $\ox\in \dom \varphi$ and $0\in \partial \varphi (\ox)$. Then $\ox$ is the \emph{tilt stable minimizer} of $\varphi$ if and only if the generalized Hessian $\partial^2 \varphi$ is positive definite around $(\ox,0)$ in the sense that there exist $\kappa,r>0$ satisfying 
\begin{equation}\label{eq:tilt}
    \la z,w\ra \ge \kappa\|w\|^2 \quad \forall z\in \partial^2 \varphi (x|\,v)(w),\; \forall\, (x,v)\in \mathbb{B}_{r}(\ox,0)\cap \gph \partial \varphi.
\end{equation}
\end{Theorem}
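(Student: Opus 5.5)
The theorem is stated as a direct consequence of \cite[Theorem~3.5]{BorisNghia15} and \cite[Example~13.30]{Rockafellar98}, so the plan is to assemble these two results rather than argue from scratch. I would start from the second-order characterization of tilt stability in \cite[Theorem~3.5]{BorisNghia15}. For a function that is prox-regular and subdifferentially continuous at $\ox$ for $0$, it states that $\ox$ is a tilt-stable local minimizer with modulus $\kappa^{-1}$ if and only if there is a neighborhood $U\times V$ of $(\ox,0)$ such that
\[
\la z,w\ra\ge \kappa\|w\|^2\quad\text{for all } z\in\partial^2\varphi(x|\,v)(w),\ (x,v)\in\gph\partial\varphi\cap(U\times V),\ w\in\XX .
\]
The first step is therefore to check the standing hypotheses for an l.s.c. convex $\varphi$. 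This is exactly where \cite[Example~13.30]{Rockafellar98} enters: a proper l.s.c. convex function is prox-regular and subdifferentially continuous at every point of its domain for every subgradient there. In particular this holds at $\ox$ for $0\in\partial\varphi(\ox)$. Also, in the convex case the limiting subdifferential used in \cite{BorisNghia15} coincides with the convex subdifferential \eqref{def:convex-subdiff}. Hence the generalized Hessian \eqref{def:secondsub} built from the limiting normal cone to $\gph\partial\varphi$ is the same object in both statements.

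The second step is to shrink the product neighborhood $U\times V$ to a ball $\mathbb{B}_r(\ox,0)$ contained in it, which gives \eqref{eq:tilt}. Conversely, a ball $\mathbb{B}_r(\ox,0)$ is itself a neighborhood of the required product form (take $U\times V\subset\mathbb{B}_r(\ox,0)$), so both directions of the equivalence transfer. The third step is to reconcile the notions of minimizer. Definition~\ref{Den:Tilt} localizes to $\mathbb{B}_\gamma(\ox)$, just as the local tilt-stability definition in \cite{BorisNghia15} does, so the definitions agree verbatim. By convexity, any local minimizer is global, as remarked after Definition~\ref{Den:Tilt}, so no discrepancy arises.

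I expect the main obstacle to be bookkeeping rather than analysis. One must make sure the conventions match between the sources: the sign convention in the coderivative \eqref{def:limit-code}, namely $(z,-w)\in N_{\gph F}$, and whether the inequality is stated for all $w$ or only on a domain. If a discrepancy in the coderivative sign appears, I would resolve it using the symmetry of $N_{\gph\partial\varphi}$ for convex $\varphi$, which follows from maximal monotonicity. Since the paper already cites this combination, the proof reduces to these verifications.
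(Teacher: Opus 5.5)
Your proposal is correct and follows the paper's route exactly. The paper gives no argument beyond citing \cite[Theorem~3.5]{BorisNghia15} together with \cite[Example~13.30]{Rockafellar98}, and the checks you list are exactly what ``direct consequence'' leaves implicit: prox-regularity and subdifferential continuity of convex functions, coincidence of the convex and limiting subdifferentials, ball versus product neighborhood, and local versus global minimizer. If the cited theorem turns out to be phrased via the regular coderivative of $\partial\varphi$ rather than $\partial^2\varphi$, add the routine limiting argument on a slightly smaller ball.

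The one thing to discard is your fallback appeal to a \emph{symmetry} of $N_{\gph\partial\varphi}$. No such symmetry holds in general: for $\varphi=|\cdot|$ on $\R$ one has $(-1,1)\in N_{\gph\partial\varphi}(0,1)$ but $(1,-1)\notin N_{\gph\partial\varphi}(0,1)$. It is also never needed, since \cite{BorisNghia15} uses the same convention $(z,-w)\in N_{\gph F}$ as \eqref{def:limit-code}.
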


\section{Newton methods with effective subspaces for polyhedral regularization}\label{sec:polyhedral}

In this section, we propose some nonsmooth Newton methods for solving the following composite optimization problem
\begin{equation}\label{p:CP}
    \min_{x\in \XX}\quad \varphi(x):=f(x)+g(x).
\end{equation}
Throughout this section, unless otherwise stated, we assume that $f:\XX\to \R$ is a twice continuously differentiable and convex function, whereas $g:\XX\to \overline{\R}$ is a proper piecewise linear/polyhedral convex function. Recall that a proper convex function $g$ is {\em piecewise linear/polyhedral} precisely when its epigraph is polyhedral, namely, when $\epi g$ can be represented as the intersection of finitely many closed half-spaces in $\XX\times\R$; see \cite[Definition~2.47 and Theorem~2.49]{Rockafellar98}.

Recently, Nghia et al. \cite{NVV25} proposed a nonsmooth Newton method for problem \eqref{p:CP} based on effective subspaces. Given an approximation pair $(y_k,z_k)\in\mathrm{gph}\,\partial g$ sufficiently close to the optimality pair $(\bar{x},-\nabla f(\bar{x}))$, their method constructs the effective subspace $L_k:=\mathrm{par}\,\partial g^*(z_k)$ and computes a Newton direction $-d_k$ by minimizing a quadratic model over $L_k$. This approach avoids computing generalized second-order information of $g$ and achieves local quadratic convergence under tilt stability. We first recall the method of \cite{NVV25} in {\bf Algorithm~\ref{algo:poly1}}.

\begin{algorithm}[H]
\caption{Newton method with effective subspaces}
\label{algo:poly1}
Input: $x_0\in \mathbb{X}, \eta > 0$.
\begin{algorithmic}[1]
  \STATE Find $(y_k,z_k)\in \gph \partial g$ satisfying
  \begin{equation}\label{eq:bound-cond0}
      \|y_k - \ox\| + \|z_k - \oz\| \le \eta \|x_k - \ox\|.
  \end{equation}
  \STATE Define the effective subspace $L_k:={\rm par}\, \partial g^*(z_k)$ and find an optimal solution $d_k$ of the problem
  \begin{equation}\label{p:Sub}
  \min_{d\in L_k}\quad \frac{1}{2}\la \nabla^2 f(y_k)d,d\ra-\la z_k+\nabla f(y_k),d\ra. 
  \end{equation}
  \STATE Update $x_{k+1}=y_k-d_k$. 
\end{algorithmic}
\end{algorithm}

The Newton direction $-d_k$ generated by the classical Newton's method 
\begin{equation}\label{Al:Newton}
x_0\in \XX,\quad\quad x_{k+1}=x_k-d_k\quad \mbox{with}\quad d_k:=[\nabla^2 \varphi(x_k)]^{-1}\nabla \varphi (x_k),
\end{equation}
is a descent direction, given that the Hessian $\nabla^2 \varphi (x_k)$ is positive definite. In contrast, this property does not generally hold for the effective subspace Newton direction computed in {\bf Algorithm~\ref{algo:poly1}}. This is illustrated below.

\begin{Example}[Failure of descent with the effective-subspace subproblem]
\label{ex:old-subproblem-linesearch-failure}
{\rm Consider the composite problem
\[
\min_{(x_1,x_2)\in\mathbb R^2}\ \varphi(x_1,x_2):=\frac12 x_1^2-x_1x_2+x_2^2+x_1-x_2+\delta_{\mathbb R^2_+}(x_1,x_2),
\]
where $f(x_1,x_2):=\frac12 x_1^2-x_1x_2+x_2^2+x_1-x_2$ and $g(x_1,x_2):=\delta_{\mathbb R^2_+}(x_1,x_2)$. The gradient and Hessian of $f$ are, respectively,
\[
\nabla f(x_1,x_2)
=
\begin{pmatrix}
x_1-x_2+1\\
-x_1+2x_2-1
\end{pmatrix} \quad \text{and} \quad
\nabla^2 f(x)=\begin{pmatrix}1&-1\\-1&2\end{pmatrix}.
\] 
Let $\alpha=1$ and choose $x_0=(0,1)$. Since $\nabla f(x_0)=(0,1)$, we get
\[
y_0=\prox_g(x_0-\nabla f(x_0))
=P_{\mathbb R^2_+}(0,0)=(0,0).
\]
The corresponding subgradient is
\[
\begin{aligned}
z_0 &=x_0-y_0-\nabla f(x_0)=(0,0), \text{ satisfying}\\
z_0 &\in \mathbb R^2_- = N_{\mathbb R^2_+}(0,0)= \partial \delta_{\R^2_+}(0,0) = \partial g(y_0).
\end{aligned}
\]

We now compute the effective-subspace Newton direction. Since $g=\delta_{\mathbb R^2_+}$, we have $g^*=\delta_{\mathbb R^2_-}$ and thus
$\partial g^*(z_0)=N_{\mathbb R^2_-}(0,0)=\R^2_+$. This gives 
\[
L_0:=\para\partial g^*(z_0)= \R^2.
\]
Moreover, $\nabla f(y_0)=(1,-1)$, and $z_0+\nabla f(y_0)=(1,-1)$. Hence, the effective-subspace constrained subproblem \eqref{p:Sub} reduces to
\[
\min_{d\in\mathbb R^2}\,\,\,\,\, \frac{1}{2} d^\top
\begin{pmatrix}
1 & -1\\
-1 & 2
\end{pmatrix}
d
-
\left\langle
\begin{pmatrix}
1\
-1
\end{pmatrix},
d
\right\rangle,
\]
whose unique minimizer is $d_0:=(1,0)$. However, vector $-d_0$ is not a descent direction for
$\varphi$ at $y_0$. Indeed, the one-sided directional derivative \eqref{eq:direc-deri} of $g$ at $y_0$ in the direction of $v \in\R^2$ is
\begin{equation}\label{eq:ex_directional}
g'(y_0;v)
=\delta'_{\mathbb R^2_+}(y_0;v)
=
\begin{cases}
    0, & v\in \mathbb R^2_+,\\
    +\infty, & v\notin \mathbb R^2_+.
\end{cases}
\end{equation}
Since $-d_0=(-1,0)\notin\mathbb R^2_+$, we have
\[
\varphi'(y_0;-d_0)
=\langle \nabla f(y_0),-d_0\rangle+g'(y_0;-d_0)
=+\infty.
\]
Thus, $-d_0$ is not a descent direction for $\varphi$ at $y_k$. Furthermore, for any $\beta>0$,
\[
g(y_0-\beta d_0) =  g\left(-\beta,0\right) = +\infty
\]
and therefore $\varphi(y_0-\beta d_0)=+\infty$. Consequently, any Armijo-type line search starting from $y_0$ along $-d_0$ fails to terminate after finitely many steps.}
\end{Example}

This is one of the crucial motivations for our modified Newton method below.

\begin{algorithm}[H]
\caption{Newton method with effective sets}
\label{algo:modified-Dk}
Input: $x_0\in \mathbb{X}, \alpha > 0$.
\begin{algorithmic}[1]
  \STATE Compute 
  \begin{equation}\label{eq:ykzk}
  y_k := \prox_{\alpha g}\big(x_k-\alpha\nabla f(x_k)\big) \quad \text{and} \quad z_k := \frac{x_k-y_k}{\alpha} - \nabla f(x_k).
  \end{equation}

  \STATE Let $\mathcal{D}_k := y_k - \partial g^* (z_k)$ and compute $d_k\in\mathbb R^{n}$ the solution of
    \begin{equation}\label{eq:new_constrained_reg_quad}
        \min_{d\in\mathcal{D}_k}\quad       \frac{1}{2} \la\nabla^2 f(y_k)d,d\ra
            -
            \left\langle z_k+\nabla f(y_k),d\right\rangle.
    \end{equation}
  \STATE Update $x_{k+1}=y_k-d_k$. 
\end{algorithmic}
\label{algo:mod_local}
\end{algorithm}

Any solution $d_k$ of the subproblem in \eqref{eq:new_constrained_reg_quad} satisfies the first-order optimality condition, i.e.,
\begin{equation*}
    - \nabla^2 f(y_k)d_k + z_k + \nabla f(y_k) \in N_{\mathcal{D}_k}(d_k),
\end{equation*}
which is equivalent to having 
\begin{equation*}
    \langle -\nabla^2 f(y_k) d_k + z_k + \nabla f(y_k), d-d_k \rangle \le 0 \qquad \forall\, d\in \mathcal{D}_k,
\end{equation*}
or one may prefer
\begin{equation}\label{eq:first-optim}
    \langle \nabla^2 f(y_k) d_k - \big(z_k + \nabla f(y_k)\big), d-d_k \rangle \ge 0 \qquad \forall\, d\in \mathcal{D}_k.
\end{equation}

Next, let us examine the effective set $\mathcal{D}_k$. 
\begin{Proposition}\label{prop:D_k}
When $g$ is merely a proper l.s.c. convex function, we have the following assertions for each $k\in \N\cup \{0\}$:
\begin{enumerate}
    \item[{\rm {\bf (i)}}] $\Span \mathcal{D}_k = \Span \big(\partial g^* (z_k) - y_k\big) =  L_k$.
    \item[{\rm {\bf (ii)}}] $\mathcal{D}_k \subset \mathrm{Ker}\, \partial^2 g(y_k|\,z_k)$.
\end{enumerate}
\end{Proposition}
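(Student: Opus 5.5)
Part (i) reduces to facts about the convex set $C_k:=\partial g^*(z_k)$. Since $z_k\in\partial g(y_k)$ by \eqref{eq:res} (or by the assumption $(y_k,z_k)\in\gph\partial g$), Fenchel--Young \eqref{def:fenyou} gives $y_k\in C_k$. Hence $\mathcal{D}_k=y_k-C_k=-(C_k-y_k)$, and
\[
\Span\mathcal{D}_k=\Span(C_k-y_k)=\para C_k=L_k
\]
by the definition \eqref{def:Par}, where $y_k\in C_k$ is used as base point. Note that $C_k$ is closed and convex, being the subdifferential of the l.s.c. convex function $g^*$.

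For (ii), I read $\Ker\partial^2 g(y_k|z_k)$ as $\{w\mid 0\in\partial^2 g(y_k|z_k)(w)\}$. By \eqref{def:limit-code}, the claim is that $(0,-w)\in N_{\gph\partial g}(y_k,z_k)$ for every $w\in\mathcal{D}_k$. It suffices to produce a regular normal, since $\widehat N\subset N$ by \eqref{def:limit-cone}. Write $w=y_k-x$ with $x\in C_k$, that is, $z_k\in\partial g(x)$. For any $(u,v)\in\gph\partial g$ near $(y_k,z_k)$, monotonicity of $\partial g$ applied to the pairs $(u,v)$ and $(x,z_k)$ gives $\la v-z_k,u-x\ra\ge0$. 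I then want to estimate
\[
\la (0,-w),(u-y_k,v-z_k)\ra=-\la w,v-z_k\ra=\la x-y_k,v-z_k\ra .
\]
Rewrite $x-y_k=(x-u)+(u-y_k)$. Then $\la x-u,v-z_k\ra\le0$ by monotonicity, and $\la u-y_k,v-z_k\ra\le\|u-y_k\|\|v-z_k\|=o(\|(u-y_k,v-z_k)\|)$. Therefore
\[
\limsup\frac{\la(0,-w),(u,v)-(y_k,z_k)\ra}{\|(u,v)-(y_k,z_k)\|}\le0,
\]
which is exactly \eqref{def:Frechet-cone}. So $(0,-w)\in\widehat N_{\gph\partial g}(y_k,z_k)\subset N_{\gph\partial g}(y_k,z_k)$, hence $0\in\partial^2 g(y_k|z_k)(w)$.

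The main obstacle is the sign bookkeeping in (ii): choosing the split $x-y_k=(x-u)+(u-y_k)$ so that monotonicity handles one term and the quadratic product term is $o(\cdot)$. If the paper's kernel convention instead uses $(0,w)$, I would apply the same argument to $-w$, noting that the order of $x$ and $y_k$ can be swapped symmetrically. Everything else is direct from the definitions.
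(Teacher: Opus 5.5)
Your proposal is correct and follows essentially the paper's own argument. Part (i) comes straight from the definitions once $y_k\in\partial g^*(z_k)$. For (ii), the paper likewise uses monotonicity of $\partial g$ against the pair $(x,z_k)$ to show that $(0,-d)$ is a regular normal to $\gph\partial g$ at $(y_k,z_k)$; it bounds the leftover term $\langle v-y_k,w-z_k\rangle$ by $\frac12\|(v-y_k,w-z_k)\|^2$ rather than by $\|u-y_k\|\|v-z_k\|$, which is only a cosmetic difference.

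Your closing hedge should be dropped. It is unnecessary, since \eqref{def:limit-code} uses exactly the $(z,-w)$ convention you handled. It is also not sound as stated: swapping the roles of $x$ and $y_k$ would produce a regular normal at $(x,z_k)$, not at $(y_k,z_k)$.
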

\begin{proof}
Assertion {\bf (i)} is clear from the definition of $\mathcal{D}_k$ and $L_k$. To verify {\bf (ii)}, pick $d\in \mathcal{D}_k$ and write $d=y_k - x$ for some $x\in \partial g^* (z_k)$. By the Fenchel-Young's identity, we have $(y_k,z_k),(x,z_k)\in \gph \partial g$. Now take any pair $(v,w)\in \gph \partial g$ and observe by the monotonicity of $\partial g$ that 
\begin{equation*}
    \la v-y_k,w-z_k\ra + \la d,w - z_k\ra = \la v-x,w-z_k\ra \ge 0,
\end{equation*}
and hence $\la -d,w-z_k\ra \le \la v-y_k,w-z_k\ra$. Therefore,
\begin{equation*}
\begin{aligned}
    \big\la (0,-d),(v-y_k,w-z_k)\big\ra = \la -d, w-z_k\ra &\le \la v-y_k,w-z_k\ra\\ &\le \dfrac{1}{2}\big(\|v-y_k\|^2 + \|w-z_k\|^2\big) \\
    &= \dfrac{1}{2}\big\|(v-y_k,w-z_k)\big\|^2,\quad \forall\, (v,w)\in \gph \partial g.
\end{aligned}
\end{equation*}
This yields in turn that $$\limsup_{(v,w)\xrightarrow{{\rm gph}\,\partial g}(y_k,z_k)} \dfrac{\big\la (0,-d),(v,w)-(y_k,z_k)\big\ra}{\big\|(v,w)-(y_k,z_k)\big\|}= 0.$$
By \eqref{def:Frechet-cone}--\eqref{def:limit-cone}, we have $(0,-d)\in \widehat{N}_{{\rm gph}\, \partial g}(y_k,z_k)\subset N_{{\rm gph}\, \partial g}(y_k,z_k)$. It follows that $0\in \partial^2 g(y_k|\, z_k)(d)$, i.e., $d\in \mathrm{Ker}\, \partial^2 g(y_k|\, z_k)$. The proof is complete.
\end{proof}

\begin{Remark}\rm 
The replacement of the effective subspace
$L_k=\operatorname{par}\partial g^*(z_k)$ in {\bf Algorithm~\ref{algo:poly1}} by the effective set $\mathcal D_k=y_k-\partial g^*(z_k)$ in {\bf Algorithm~\ref{algo:modified-Dk}}
is motivated by the following two observations.

\begin{enumerate}
\item
Since $\operatorname{span}\mathcal D_k=L_k$, the set $\mathcal D_k$
generates the same effective directions as $L_k$. However, while $L_k$
retains only the linear directions parallel to the polyhedral face
$\partial g^*(z_k)$, the set $\mathcal D_k$ preserves its boundary and
one-sided feasible geometry. Thus, it rules out directions that belong
to the whole subspace $L_k$ but leave the relevant face.

\item
If $d_k\in\mathcal D_k$, then
$y_k,y_k-d_k\in\partial g^*(z_k)$. By convexity of $\partial g^* (z_k)$, we also have
\[
    y_k-\beta d_k
    =(1-\beta)y_k+\beta(y_k-d_k)
    \in\partial g^*(z_k)
    \qquad\forall\,\beta\in[0,1].
\]
The Fenchel-Young's identity \eqref{def:fenyou} asserts that
\begin{equation*}
\begin{aligned}
    g(y_k - \beta d_k) + g^* (z_k) &= \langle y_k - \beta d_k, z_k\rangle,\\
    g(y_k) + g^* (z_k) &= \langle y_k,z_k\rangle.
\end{aligned}
\end{equation*}
Subtracting the second equation by the first one yields
\begin{equation*}
    g(y_k-\beta d_k) - g(y_k) = -\beta \langle d_k, z_k\rangle,
\end{equation*}
i.e.,
\begin{equation}\label{eq:g-affine}
    g(y_k-\beta d_k)
    =
    g(y_k)-\beta\langle d_k,z_k\rangle,
    \qquad \beta\in[0,1].
\end{equation}
Hence, the map $\beta \mapsto g(y_k-\beta d_k)$ is affine with respect to $\beta\in [0,1]$. This property is particularly useful for the
line-search procedure in {\bf Algorithm~\ref{alg:Mod_regularized_ESN}}, since the Armijo condition for
$\varphi=f+g$ can then be implemented using evaluations of the smooth
component $f$ only.
\end{enumerate}
\end{Remark}

The following example demonstrates that the modified Newton step in {\bf Algorithm~\ref{algo:modified-Dk}} is not necessarily either the coderivative-based Newton step in \cite[Algorithm~2,~Step~4]{KhanhMordukhovichPhat25} or the graphical derivative-based step in \cite[Algorithm~5.1,~Step~2]{MordukhovichSarabi21}.

\begin{Example}\label{ex:not_coderivative_baed}\rm 
Let $\mathbb{X}:=\mathbb{R}^2$, $C:=[0,1]^2$, and consider the convex functions
\[
    g:=\delta_C,
    \qquad
    f(x_1,x_2):=\frac{1}{2}(x_1^2+x_2^2)-\frac{3}{2}x_1.
\]
Then $f$ is $1$-strongly convex and twice continuously differentiable, with
\begin{equation*}
    \nabla f(x_1,x_2)
    =
    \begin{pmatrix}
        x_1-3/2\\
        x_2
    \end{pmatrix},
    \qquad
    \nabla^2 f(x_1,x_2)=\mathbb{I}_2.
\end{equation*}
The unique minimizer of $\varphi:=f+g$ is $\bar{x}:=(1,0)$. Indeed,
$\nabla f(\bar{x})=\begin{pmatrix}-1/2 \\0\end{pmatrix}$, and hence
\begin{equation*}
    \bar{z}:=-\nabla f(\bar{x})
    =
    (1/2,0)
    \in N_C(\bar{x})
    =
    \partial g(\bar{x}).
\end{equation*}

We apply {\bf Algorithm~\ref{algo:modified-Dk}} with the initial point $x_0:=(0,0)$. For simplicity, let us 
choose the pair $(y_0,z_0)$ by the proximal-gradient step with
$\alpha:=1/3$. Since $g=\delta_C$, one has
$\prox_{\alpha g}=P_C$. Therefore, 
\begin{equation*}
    y_0
    =
    P_C\big(x_0-\alpha\nabla f(x_0)\big) = P_C\big((0,0)-1/3 (-3/2,0)\big) =
    P_C(1/2,0)
    = (1/2,0).
\end{equation*}
The associated subgradient is
\begin{equation*}
    z_0=
    \frac{x_0-y_0}{\alpha}-\nabla f(x_0) \notag=
    3(-1/2,0) - (-3/2,0) = (0,0).
\end{equation*}

We next compute the effective set $\mathcal{D}_0$. Since
$g^*=\sigma_C$ and $z_0=(0,0)$, we have
\begin{equation*}
    \partial g^*(z_0)
    =
    \partial \sigma_C(0,0)
    =
    C
    =
    [0,1]^2.
\end{equation*}
Consequently,
\begin{equation*}
    \mathcal{D}_0:=
    y_0 - \partial g^* (z_0)=
    y_0-[0,1]^2
    = (1/2,0) - [0,1]\times [0,1]=
    [-1/2,1/2]\times [-1,0].
\end{equation*}

At $y_0$, we have
\begin{equation}\label{eq:mate1}
    z_0+\nabla f(y_0)
    =
    \begin{pmatrix} 0 \\ 0 \end{pmatrix} + \begin{pmatrix} 1/2 - 3/2 \\ 0 \end{pmatrix} = \begin{pmatrix} -1 \\ 0 \end{pmatrix},
    \qquad
    \nabla^2 f(y_0)=\mathbb{I}_2.
\end{equation}
Hence the quadratic subproblem in {\bf Algorithm~\ref{algo:modified-Dk}} becomes
\begin{equation*}
    \min_{(d_1,d_2)\in [-1/2,1/2]\times [-1,0]}\,\,\,
        \frac{1}{2}(d_1^2+d_2^2)+d_1,
\end{equation*}
whose unique minimizer is $d_0:= (-1/2,0)$. Using \eqref{eq:mate1}, let us compute the residual
\begin{equation*}
    r_0
    :=
    \nabla^2 f(y_0)d_0 -z_0-\nabla f(y_0)  = -(1/2,0) + (1,0) = (1/2,0).
\end{equation*}

{\bf 1.} We now show that the pair $(r_0,d_0) = \big((1/2,0),(-1/2,0)\big)$ does not belong to
$N_{\mathrm{gph}\,\partial g}(y_0,z_0)$. Indeed,  the critical cone to $C$ at $y_0$ for $z_0$ is
\begin{equation*}
    K
    :=
    T_C(y_0)\cap\{z_0\}^{\perp}=
    T_C(1/2,0)
    =
    \mathbb{R}\times\mathbb{R}_+.
\end{equation*}
The only two closed faces of $K$ are
\begin{equation*}
    F:=\mathbb{R}\times\{0\}
    \qquad\mbox{and}\qquad
    K=\mathbb{R}\times\mathbb{R}_+.
\end{equation*}
Since $g=\delta_C$, we may employ \cite[Proposition~4.4]{PoliquinRock98} to derive that 
\begin{equation*}
    N_{\mathrm{gph}\, \partial g}(y_0,z_0)
    =\big\{(F_1-F_2)^*\times (F_1-F_2)|\, \mbox{$F_1, F_2$ are closed faces of $K$ with } F_2\subset F_1\big\}.
    \label{eq:counter-graph-normal-formula}
\end{equation*}
The only possible difference sets $F_1-F_2$ are $F-F$, $K-F$, and $K-K$, which are, respectively,
\begin{equation*}
    F,\qquad K,\qquad \text{and} \qquad\mathbb{R}^2.
\end{equation*}
Their polars are, respectively,
\begin{equation*}
    F^*=\{0\}\times\mathbb{R},
    \qquad
    K^*=\{0\}\times\mathbb{R}_-,
    \qquad \text{and}\qquad
    (\mathbb{R}^2)^*=\{(0,0)\}.
\end{equation*}
Consequently, every $(u,v)\in N_{\mathrm{gph}\, \partial g}(y_0,z_0)$ necessarily
satisfies $u_1=0$.
On the other hand, the first component of the pair $(r_0,d_0)$ is $r_0 = (1/2,0)$,
whose first coordinate is nonzero. Therefore,
\[
    \big(
        \nabla^2 f(y_0)d_0-z_0-\nabla f(y_0),
        d_0
    \big) = (r_0,d_0)
    \notin
    N_{\mathrm{gph}\,\partial g}(y_0,z_0),
\]
i.e.,
\begin{equation}\label{eq:not-coderivative-based}
    \nabla^2 f(y_0)d_0 - z_0 - \nabla f(y_0) \notin \partial^2 g(y_0|\,z_0)(-d_0).
\end{equation}
This shows that the direction $-d_0$ generated by {\bf Algorithm~\ref{algo:modified-Dk}} does not,
in general, satisfy the coderivative-based Newton inclusion (e.g., see \cite[Algorithm~2,~Step~4]{KhanhMordukhovichPhat25}), even when
$f$ is strongly convex quadratic and $g$ is the indicator function of
a polyhedral convex set.

{\bf 2.} Let us recall from \cite[Equation~(2.5)]{MordukhovichSarabi21} the definition of the {\em graphical derivative} of $\partial g$ at $(y,z)\in \gph \partial g$ by
\begin{equation*}
    (D\partial g)(y|\, z)(u) := \big\{v\in \XX|\, (u,v)\in T_{\mathrm{gph}\, \partial g}(y,z) \quad \text{for all}\quad u\in \XX\big\}.
\end{equation*}
By \cite[Proposition~13.9~and~Theorem~13.57]{Rockafellar98}, we have the inclusion
\begin{equation*}
    (D\partial g)(y_0|\, z_0)(-d_0) \subset \partial^2 g(y_0|\,z_0)(-d_0).
\end{equation*}
Combining with \eqref{eq:not-coderivative-based}, we arrive at 
\begin{equation*}
    \nabla^2 f(y_0)d_0 - z_0 - \nabla f(y_0) \notin (D\partial g)(y_0|\,z_0)(-d_0).
\end{equation*}
Thus $-d_0$ also violates the graphical derivative-based Newton inclusion in \cite[Algorithm~5.1,~Step~2]{MordukhovichSarabi21}.
\end{Example}

\begin{Theorem}[Quadratic convergence of the Newton method with effective set] \label{theo:quad-poly1}
Consider problem \eqref{p:CP} and suppose that $\bar{x}$ is the tilt stable minimizer of the function $\varphi$ in \eqref{p:CP}, i.e., there exist $\kappa,r>0$ for which \eqref{eq:tilt} holds, and that the Hessian $\nabla^2 f(x)$ is Lipschitz continuous around $\ox$. Then there exists $\ve>0$ such that for any starting point $x_0\in \mathbb{B}_{\ve}(\ox)$, we have the assertions: 

\begin{enumerate}[\bf(i)]
    \item The optimal solution $\{d_k\}$ of problem~\eqref{eq:new_constrained_reg_quad} is well-defined and unique.

    \item  The iterate sequence $\{x_k\}$ and the objective value sequence $\{\varphi(x_k)\}$ generated by {\bf Algorithm~\ref{algo:mod_local}} converges quadratically to $\ox$ and $\varphi(\ox)$, respectively. 

    \item  In case $f(x) = \dfrac{1}{2}\la Ax,x\ra + \la b,x\ra + c$, where $A \in \R^{n \times n}$ is a symmetric positive semidefinite matrix, $b\in \R^n$ and $c\in \R$, then the sequence $\{x_k\}$ generated by {\bf Algorithm~\ref{algo:mod_local}} terminates at $\ox$ after finitely many steps. Consequently, the sequence $\{\varphi(x_k)\}$ terminates at $\varphi(\ox)$ after finitely many steps.
    
\end{enumerate}

\end{Theorem}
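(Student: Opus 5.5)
Throughout, write $\bar z:=-\nabla f(\ox)\in\partial g(\ox)$. The plan has three ingredients: the local conic structure of $\gph\partial g$ near $(\ox,\oz)$, uniform curvature of $\nabla^2 f$ on the effective sets, and Lipschitz continuity of the proximal-gradient map.

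\emph{Step 1: local estimate on $(y_k,z_k)$.} Since $\ox$ is a fixed point of $x\mapsto\prox_{\alpha g}(x-\alpha\nabla f(x))$ and $\prox_{\alpha g}$ is nonexpansive while $\nabla f$ is Lipschitz near $\ox$, \eqref{eq:ykzk} yields
\[
\|y_k-\ox\|+\|z_k-\oz\|\le \eta\|x_k-\ox\|,
\]
where $\eta$ depends only on $\alpha$ and the local Lipschitz constant of $\nabla f$. This is exactly the bound \eqref{eq:bound-cond0}.

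\emph{Step 2: face inclusion and curvature.} Because $g$ is polyhedral, $\gph\partial g$ coincides locally with $(\ox,\oz)$ plus a finite union of polyhedral cones. From this I will derive \eqref{eq:intro-face}: for $(y,z)\in\gph\partial g$ close to $(\ox,\oz)$ one has $\ox\in\partial g^*(z)\subset\partial g^*(\oz)$. The argument is that $(\ox,z)$ belongs to the translated cone containing $(y,z)$, using that $(0,z-\oz)$ and $(y-\ox,z-\oz)$ lie in a common cone. This reconstruction of the polyhedral structure is the \textbf{main obstacle}, and I expect to cite \cite{MordukhovichSarabi16} for it. Once it is in place, for $k$ large enough we get $y_k-\ox\in\mathcal D_k$ and $\mathcal D_k\subset\para\partial g^*(\oz)$.

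Next I will use tilt stability. Through Theorem~\ref{theo:Tilt} and \cite[Theorem~4.3]{CuiHoheiseiNghiaSun24} it gives \eqref{eq:intro-tilt}, so there is $\kappa'>0$ with $\la\nabla^2 f(\ox)w,w\ra\ge\kappa'\|w\|^2$ on $\para\partial g^*(\oz)$. By continuity of $\nabla^2 f$, we get $\la\nabla^2 f(y_k)w,w\ra\ge\tfrac{\kappa'}2\|w\|^2$ for all $w$ in that subspace whenever $y_k$ is near $\ox$. Since $\mathcal D_k=y_k-\partial g^*(z_k)$ is a nonempty closed convex polyhedron lying in this subspace, the objective of \eqref{eq:new_constrained_reg_quad} is strongly convex on $\mathcal D_k$. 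Hence a unique $d_k$ exists, which proves (i).

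\emph{Step 3: quadratic estimate.} Set $\bar d_k:=y_k-\ox\in\mathcal D_k$ and $G_k:=\nabla^2 f(y_k)$. Because $\ox\in\partial g^*(z_k)$, we have $z_k\in\partial g(\ox)$. Also $\ox\in\partial g^*(\oz)$ and $\partial g^*(z_k)\subset\partial g^*(\oz)$, so by Fenchel--Young $z_k-\oz\perp\para\partial g^*(\oz)$, and in particular $\la z_k-\oz,d\ra=0$ for every $d\in\mathcal D_k-\mathcal D_k$.

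Apply \eqref{eq:first-optim} with $d=\bar d_k$. This gives
\[
\la G_kd_k-z_k-\nabla f(y_k),\,\bar d_k-d_k\ra\ge0 .
\]
Replace $z_k$ by $\oz=-\nabla f(\ox)$, which is allowed by the orthogonality above, and add $\la G_k\bar d_k-\nabla f(y_k)+\nabla f(\ox),\,d_k-\bar d_k\ra$ to both sides. The result is
\[
\la G_k(d_k-\bar d_k),\,d_k-\bar d_k\ra\le \la \nabla f(y_k)-\nabla f(\ox)-G_k(y_k-\ox),\,d_k-\bar d_k\ra .
\]
Now use the curvature bound on the left, and on the right the Lipschitz continuity of $\nabla^2 f$ with constant $L$, which gives $\|\nabla f(y_k)-\nabla f(\ox)-G_k(y_k-\ox)\|\le\tfrac L2\|y_k-\ox\|^2$. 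This yields
\[
\|x_{k+1}-\ox\|=\|d_k-\bar d_k\|\le\tfrac{L}{\kappa'}\|y_k-\ox\|^2\le\tfrac{L\eta^2}{\kappa'}\|x_k-\ox\|^2 .
\]
Choosing $\ve$ small keeps all iterates in the neighborhood by induction, so quadratic convergence of $\{x_k\}$ follows. For the function values, $g$ is affine on the face, i.e. $g(x_{k+1})=g(\ox)+\la z_k,x_{k+1}-\ox\ra$, and $f$ is $C^2$. Hence
\[
\varphi(x_{k+1})-\varphi(\ox)=\la\nabla f(\ox)+z_k,\,x_{k+1}-\ox\ra+O(\|x_{k+1}-\ox\|^2).
\]
The first term vanishes by the orthogonality $z_k-\oz\perp\para\partial g^*(\oz)$. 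So $|\varphi(x_{k+1})-\varphi(\ox)|=O(\|x_{k+1}-\ox\|^2)=O(\|x_k-\ox\|^4)$, which gives (ii).

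\emph{Step 4: the quadratic case.} If $f$ is quadratic then $L=0$, so the estimate in Step 3 gives $x_{k+1}=\ox$ as soon as the inclusions of Step 2 hold. This holds already at $k=0$ for $x_0\in\mathbb B_\ve(\ox)$, and (iii) follows.
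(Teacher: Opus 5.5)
Your architecture matches the paper's proof: the nonexpansiveness bound on $(y_k,z_k)$, the polyhedral inclusions from \cite{MordukhovichSarabi16}, testing \eqref{eq:first-optim} at $y_k-\ox$, the Lipschitz-Hessian Taylor bound, and zero residual in the quadratic case. The curvature sub-step is done differently. You obtain it from \eqref{eq:intro-tilt} at $\ox$, plus continuity of $\nabla^2 f$ and $\mathcal D_k\subset\para\partial g^*(\oz)$. The paper instead applies \eqref{eq:tilt} at the perturbed pair $(y_k,\nabla f(y_k)+z_k)$ together with $\Ker\partial^2 g(y_k|\,z_k)=\para\partial g^*(z_k)$. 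Your variant is fine.

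\textbf{False orthogonality claim in Step 3.} The claim $z_k-\oz\perp\para\partial g^*(\oz)$ is false, and it fails exactly in the degenerate regime the theorem targets. Take $\XX=\R$, $f(x)=\frac12x^2$ and $g=\delta_{\R_+}$. Then $\ox=\oz=0$, $\partial g^*(\oz)=\R_+$ and $\para\partial g^*(\oz)=\R$. For $\alpha\in(0,1)$ and $x_k<0$ you get $y_k=0$ and $z_k=(1/\alpha-1)x_k<0$, which is not orthogonal to $\R$. On $\partial g^*(\oz)$, Fenchel--Young only gives $\la \oz-z_k,x-\ox\ra\ge 0$; equality there would need $\partial g^*(\oz)\subset\partial g^*(z_k)$, the opposite of the inclusion you have.

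What is true is $z_k-\oz\perp\para\partial g^*(z_k)$, and this is all you actually use. For every $x\in\partial g^*(z_k)\subset\partial g^*(\oz)$, subtract $g(x)+g^*(\oz)=\la\oz,x\ra$ from $g(x)+g^*(z_k)=\la z_k,x\ra$. This shows $\la z_k-\oz,x\ra=g^*(z_k)-g^*(\oz)$ is constant on $\partial g^*(z_k)$. Since $\bar d_k-d_k=\ox-x_{k+1}\in\mathcal D_k-\mathcal D_k\subset\para\partial g^*(z_k)$, two steps survive:
\begin{itemize}
\item replacing $z_k$ by $\oz$ in the variational inequality;
\item the vanishing of the first-order term in your function-value expansion.
\end{itemize}
This is precisely the paper's identity \eqref{eq:projzkzbar-2_modESN}, obtained there from monotonicity of $\partial g^*$. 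For the values it is even simpler to note that $x_{k+1},\ox\in\partial g^*(\oz)$, so $g(x_{k+1})-g(\ox)=\la\oz,x_{k+1}-\ox\ra$ directly.

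\textbf{Missing step for the objective values in (ii).} The bound $\varphi(x_{k+1})-\varphi(\ox)=O(\|x_k-\ox\|^4)$ does not by itself give the Q-quadratic estimate $\varphi(x_{k+1})-\varphi(\ox)\le C[\varphi(x_k)-\varphi(\ox)]^2$. You also need a lower bound $\varphi(x_k)-\varphi(\ox)\ge c\|x_k-\ox\|^2$. The paper takes this quadratic growth from tilt stability via \cite[Theorems~3.2 and~3.5]{BorisNghia15}.

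You can also get it from your own ingredients along the iterates. For $k\ge1$ you have $x_k\in\partial g^*(z_{k-1})\subset\partial g^*(\oz)$ and $\ox\in\partial g^*(\oz)$. Hence $\varphi(x_k)-\varphi(\ox)=f(x_k)-f(\ox)-\la\nabla f(\ox),x_k-\ox\ra$ with $x_k-\ox\in\para\partial g^*(\oz)$. The integral form of Taylor's theorem and your curvature bound $\kappa'/2$ on that subspace then give $\varphi(x_k)-\varphi(\ox)\ge\frac{\kappa'}{4}\|x_k-\ox\|^2$.
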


\begin{proof}
Since $g$ and $g^*$ are polyhedral convex functions with $\ox\in \dom g$ and $\oz:=-\nabla f(\ox)\in \dom g^*$, by \cite[Proposition~3.3(a)]{MordukhovichSarabi16} we may shrink $r>0$ if necessary so that 
\begin{equation}\label{eq:inclu-polyhedral}
\begin{aligned}
    \partial g(x) &\subset \partial g(\ox)\quad \text{ for all }x\in \dom g \cap \mathbb{B}_r (\ox),\text{ and}\\
    \partial g^* (z) &\subset \partial g^* (\oz) \quad \text{ for all }z\in \dom g^* \cap \mathbb{B}_r (\oz).
\end{aligned}
\end{equation}
Further, let us suppose that $\nabla f$ and $\nabla^2 f$ are Lipschitz continuous with some modulus $\ell>0$ on the closed ball $\mathbb{B}_r (\ox)$. Set $\eta:=\dfrac{(2+\alpha)(1+\ell \alpha)}{\alpha}$.

We choose a constant 
\begin{equation}\label{eq:epsilon_choice}
    0<\ve < \min\left\{r;\;\dfrac{r}{2\eta\left(1+\ell\right)};\; \dfrac{\kappa}{\ell \eta^2}\right\},
\end{equation}
and define $\overline{C}:=\dfrac{\ell\eta^2}{2\kappa}$. Obviously, one has $2\overline{C}\ve < 1$. Fix $x_0\in \mathbb{B}_{\varepsilon}(\ox)$.

First, let us verify {\bf (i)} and the quadratic convergence of $x_k \to \ox$ as in {\bf (ii)}. By induction, assume that $x_k\in \mathbb{B}_{\ve}(\ox)$ for some $k\in \N \cup \{0\}$, and prove that the optimal solution $d_k$ of problem \eqref{p:Sub} exists and is unique, and that
$\|x_{k+1}-\ox\|\le \overline{C}\|x_k -\ox\|^2 < \ve$. The nonexpansiveness property of $\prox_{\alpha g}$ implies that
\begin{equation}\label{eq:y0xbar-rem-1}
\begin{aligned}
    \|y_k - \ox\| &= \left\|\prox_{\alpha g} \big(x_k - \alpha \nabla f(x_k)\big) - \prox_{\alpha g} \big(\ox - \alpha \nabla f(\ox)\big)\right\| \\
    &\le \big\|(x_k -\ox) - \alpha \big(\nabla f(x_k) - \nabla f(\ox)\big)\big\| \le \big(1+\ell {\alpha}\big)\|x_k - \ox\|.
\end{aligned}
\end{equation}
Moreover, it follows from \eqref{eq:y0xbar-rem-1} and the Lipschitz continuity of $\nabla f$ on $\mathbb{B}_{\varepsilon}(\ox)$ that
\begin{equation}\label{eq:z0zbar}
\begin{aligned}
    \|z_k - \oz\| = \left\|\dfrac{x_k - y_k}{\alpha} - \nabla f(x_k) + \nabla f(\ox)\right\| &\le \dfrac{\|x_k - \ox\|}{\alpha}+\dfrac{\|y_k - \ox\|}{\alpha} + \ell \|x_k - \ox\|\le \dfrac{2(1+\ell\alpha)}{\alpha}\|x_k-\ox\|,
\end{aligned}
\end{equation} 
which together with \eqref{eq:y0xbar-rem-1} and \eqref{eq:epsilon_choice} and the definition of $\eta:=\frac{(2+\alpha)(1+\ell\alpha)}{\alpha}$ yield
\begin{equation}\label{eq:y0_modESN}
\begin{aligned}
    \|y_k - \ox\| + \|z_k - \oz\| \le \eta\|x_k-\ox\| \le \eta \ve < \dfrac{r}{2}.
\end{aligned}
\end{equation}
With $v_k:=\nabla f(y_k)+z_k$, we have $v_k \in \partial \varphi (y_k)$. Its norm is bounded as follows:
\begin{equation}\label{eq:v0_modESN}
    \|v_k\|=\|\nabla f(y_k)-\nabla f(\ox) +z_k-\oz\|\le \ell\|y_k-\ox\|+\|z_k-\oz\|\le (1+\ell)\eta\ve<\frac{r}{2}.
\end{equation}
Consequently, the pair $(y_k,v_k)$ belongs to $\gph \partial \varphi \cap \mathbb{B}_r (\ox,0)$. By employing the second-order characterization \eqref{eq:tilt} of tilt stability at $\ox$ and the generalized Hessian sum rule in \cite[Proposition~1.121]{Mordukhovich06}, one gets the relation
\[
\la \nabla^2 f(y_k)w,w\ra+\la u,w\ra \ge \kk\|w\|^2\quad \mbox{for all}\quad u\in \partial^2 g(y_k|\, z_k)(w). 
\]
In particular at $u=0$, we utilize the identity $\Ker \partial^2 g(y_k|\,z_k) = \mathrm{par}\, \partial g^* (z_k)$, which follows from \cite[Equation~(4.3)]{BorisRock12} to deduce further that
\begin{equation}\label{eq:Noz_modESN}
\la \nabla^2 f(y_k)w,w\ra\ge \kk\|w\|^2\quad \mbox{for all}\quad w\in \para\partial g^*(z_k) = L_k = \Span \mathcal{D}_k.
\end{equation}
Set $\Theta_k (d):= \frac{1}{2}\la \nabla^2 f(y_k)d,d\rangle - \la z_k + \nabla f(y_k),d\ra$, for $d\in \XX$. The assertion in \eqref{eq:Noz_modESN} implies that the lower-level set $$\big\{d\in \mathcal{D}_k |\, \Theta_k (d) \le \Theta_k (0) = 0\big\}
$$
is nonempty, closed, and bounded. In particular, it is a compact set. Because $\Theta_k(\cdot)$ is lower semicontinuous and minimizing $\Theta_k (\cdot)$ on such lower-lovel set is equivalent to minimizing $\Theta_k(\cdot)$ on $\mathcal{D}_k$, the optimization problem in \eqref{eq:new_constrained_reg_quad} has at least one solution over $\mathcal{D}_k$. Moreover, we claim that this solution is unique. Indeed, suppose by contrary that $d_1,d_2\in \mathcal{D}_k$ are both minimizers. It follows from \eqref{eq:first-optim} and \eqref{eq:Noz_modESN} that \begin{equation*}
\begin{aligned}
     \kappa \|d_1 - d_2\|^2 &\le \la \nabla^2 f(y_k) (d_1 - d_2),d_1 - d_2\rangle \\
     &= \langle \nabla^2 f(y_k) d_1 - (z_k + \nabla f(y_k)) , d_1 - d_2 \rangle - \langle \nabla^2 f(y_k) d_2 - (z_k + \nabla f(y_k)), d_1 - d_2\rangle \\
     &\le 0 -0 = 0.
\end{aligned}
\end{equation*}
This yields $d_1 = d_2$, and thus guarantees the uniqueness of the optimal solution $d_k$ to the problem in~\eqref{eq:new_constrained_reg_quad}.

Next, we verify the estimates $\|x_{k+1}-\ox\|\le \overline{C}\|x_k-\ox\|^2 < \ve$. Let us denote $H_k:= \nabla^2f(y_k)$. The relations in \eqref{eq:inclu-polyhedral} and \eqref{eq:y0_modESN} together yield the inclusions 
\begin{equation}\label{eq:absorp-gg*}
\begin{aligned}
z_k\in \partial g(y_k) \subset \partial g(\ox), \quad\text{and}\quad
\partial g^* (z_k) \subset \partial g^* (\oz),
\end{aligned}
\end{equation}
which imply further by Fenchel-Young's identity \eqref{def:fenyou} the inclusions  $\ox,y_k \in \partial g^* (z_k)$. In particular, we have $y_k - \ox\in \mathcal{D}_k$. It follows by employing \eqref{eq:first-optim} that
\begin{equation*}
    \langle H_k d_k - (z_k+\nabla f(y_k)), (y_k-\ox) - d_k\rangle \ge 0.
\end{equation*}
Adding $\langle H_k (y_k-\ox - d_k),y_k - \ox - d_k\rangle$ to both sides,
\begin{equation}\label{eq:connect1}
\begin{aligned}
    \la H_k(y_k -\ox) - (z_k + \nabla f(y_k)),y_k - \ox-d_k\ra &\geq \la H_k (y_k - \ox - d_k),y_k - \ox - d_k\ra \\
    &=\la H_k(d_k - (y_k -\ox)), d_k - (y_k -\ox)\ra.
\end{aligned}
\end{equation}

On the one hand, since $\oz = -\nabla f(\ox)$, the left-hand side of \eqref{eq:connect1} can be rewritten as
\begin{equation}\label{eq:connect_equiv}
\begin{aligned}
    &\la H_k(y_k -\ox) - (z_k + \nabla f(y_k)),y_k - \ox-d_k\ra \\
    &= \la H_k(y_k - \ox) + \nabla f(\ox) -\nabla f(y_k),y_k - \ox-d_k\ra - \la z_k-\oz,y_k - \ox-d_k\ra.
\end{aligned}
\end{equation}
Next, we claim that
\begin{equation}\label{eq:projzkzbar-2_modESN}
 \la z_k-\oz,y_k - \ox-d_k\ra=0.
\end{equation}
Indeed, it follows from $d_k \in \mathcal{D}_k$ and \eqref{eq:absorp-gg*} that $y_k - d_k \in \partial g^*(z_k) \subset \partial g^*(\oz)$. We also get from \eqref{eq:absorp-gg*} that $\ox\in \partial g^* (z_k) \subset \partial g^* (\oz)$. Consequently, the inclusion $\{y_k - d_k,\ox\}\subset \partial g^* (z_k) \cap \partial g^* (\oz)$ holds. Together with the monotonicity of $\partial g^*$, it yields the estimates
\begin{equation*}
\begin{aligned}
    \la z_k - \bar{z}, y_k - d_k - \ox\ra &\ge 0,\\
    \la \oz - z_k, y_k-d_k-\ox \ra &\ge 0,
\end{aligned}
\end{equation*}
and hence the identity
\[
\langle z_k - \oz, y_k - d_k - \ox\rangle =0,
\]
i.e., \eqref{eq:projzkzbar-2_modESN} is verified. Next, the Lipschitz continuity of $\nabla^2 f$ on $\mathbb{B}_r (\ox)$ and \eqref{eq:y0_modESN} imply that
\begin{equation}
\begin{aligned}
    \left\|\nabla^2 f(y_k)(y_k-\ox) + \nabla f(\ox)-\nabla f(y_k)\right\|
    &=\left\|\int_0^1 \left[\nabla^2 f (y_k)-\nabla^2 f\big(y_k + t(\ox-y_k)\big)\right](y_k-\ox)\,dt\right\|\\
    &\le \int_0^1 \Big\|\nabla^2 f (y_k)-\nabla^2 f\big(y_k + t(\ox-y_k)\big)\Big\| \|y_k-\ox\|\, dt\\
    &\le \dfrac{\ell}{2}\|y_k-\ox\|^2.
    \label{eq:1-di-2_modESN}
\end{aligned}
\end{equation}
The relations in \eqref{eq:connect_equiv}--\eqref{eq:1-di-2_modESN} yield via the Cauchy-Schwarz inequality the estimate
\begin{equation}\label{eq:left-314}
    \la H_k(y_k -\ox) - (z_k + \nabla f(y_k)),y_k - \ox-d_k\ra \le \dfrac{\ell}{2}\|y_k-\ox\|^2\, \|y_k - \ox - d_k\|.
\end{equation}
By combining \eqref{eq:connect1} with \eqref{eq:left-314}, we arrive at 
\begin{equation}\label{eq:onehand}
    \langle H_k(d_k - (y_k - \ox)), d_k - (y_k - \ox)\rangle \le \dfrac{\ell}{2}\|y_k - \ox\|^2\, \|y_k - \ox-d_k\|.
\end{equation}

On the other hand, since $d_k - (y_k -\ox) \in \mathcal{D}_k - \mathcal{D}_k \subset \mathrm{span}\, \mathcal{D}_k$, applying \eqref{eq:Noz_modESN} with $w:=d_k - (y_k -\ox)$ gives us that 
\begin{equation}\label{eq:tilt_connect}
    \big\la H_k(d_k - (y_k -\ox)),d_k - (y_k -\ox)\big\ra \ge \kappa \|d_k - (y_k -\ox)\|^2.
\end{equation}

We may now combine the estimates in \eqref{eq:epsilon_choice}, \eqref{eq:y0_modESN}, and \eqref{eq:onehand}--\eqref{eq:tilt_connect} to derive that 
\begin{equation}\label{eq:k+1-less-epsilon}
\begin{aligned}
&\|x_{k+1}-\ox\| = \|y_k - d_k - \ox\| = \|d_k - (y_k - \ox)\|\le \dfrac{\ell}{2\kappa}\|y_k-\ox\|^2 \\
&\le \dfrac{\ell \eta^2}{2\kappa} \|x_k-\ox\|^2 = \overline{C}\|x_k-\ox\|^2 \le  \dfrac{1}{2}\|x_k-\ox\| < \varepsilon.
\end{aligned}
\end{equation}
Thus, the induction step are proved. Consequently, the well-definedness of the optimal solution $d_k$ of \eqref{eq:new_constrained_reg_quad}, and the estimate $\|x_{k+1}-\ox\|\le \overline{C}\|x_k-\ox\|^2$, where $\overline{C}$ is independent of $k$, are also verified. Hence, the sequence $\{x_k\}$ converges quadratically to $\ox$.

Next, we show the quadratic convergence of the objective value sequence $\{\varphi (x_k)\}$ to $\varphi (\ox)$. Since $x_k \to \ox$, we have $\{x_k\}\subset \mathbb{B}_{\varepsilon}(\ox)$ for all $k\in \N$ large enough. This guarantees the validation of all the estimates established above for each $k\in \N$ large. In particular, as $\oz \in \partial g(\ox) \cap \partial g(y_k-d_k)$, the Fenchel-Young's identity \eqref{def:fenyou} gives
\begin{equation*}
\begin{aligned}
    g(y_k - d_k) + g^*(\oz) &= \langle \oz, y_k - d_k\ra, \text{ and}\\
    g(\ox) + g^*(\oz) &= \langle \oz, \ox\ra.
\end{aligned}
\end{equation*}
Subtracting lends us the estimate
\[
g(x_{k+1})-g(\ox)=g(y_k -d_k)-g(\ox)= \langle \bar z,y_k -d_k-\ox\ra= \langle \bar z,x_{k+1}-\ox\ra.
\]
Because $\bar z=-\nabla f(\ox)$ and $\nabla f$ is Lipschitz continuous on $\mathbb{B}_r(\ox)$ with modulus $\ell$, we derive from the above and \eqref{eq:k+1-less-epsilon} that
\begin{equation}
\begin{aligned}
\varphi(x_{k+1})-\varphi(\ox)
&=f(x_{k+1})-f(\ox)+g(x_{k+1})-g(\ox) \\
&=f(x_{k+1})-f(\ox)-\langle \nabla f(\ox),x_{k+1}-\ox\ra \leq \dfrac{\ell}{2}\|x_{k+1}-\ox\|^2 \leq \dfrac{\ell}{2}\Bar{C}^2\|x_k-\ox\|^4 \label{eq:varphi_quad1}.
\end{aligned}
\end{equation}
Moreover, since \eqref{eq:tilt} holds, the results in \cite[Theorems~3.2~and~3.5]{BorisNghia15} give rise to the estimate
\begin{equation}\label{eq:quad_growth}
\varphi (x_k) - \varphi(\ox) \geq \dfrac{\kappa}{2}\|x_k-\ox\|^2\quad \text{ for all }\,\, k\in \N \text{ large} .
\end{equation}
Combining \eqref{eq:varphi_quad1} and \eqref{eq:quad_growth} gives
\[
\varphi(x_{k+1})-\varphi(\ox)\leq \dfrac{\ell}{2}\Bar{C}^2\|x_k-\ox\|^4 \leq \dfrac{2\ell}{\kappa^2}\Bar{C}^2 \big[\varphi (x_k) - \varphi(\ox)\big]^2,
\]
which shows the local quadratic convergence rate of $\{\varphi(x_k)\}$ to $\varphi (\ox)$. Hence, the claims in {\bf (ii)} are thus justified.

Finally, to show that {\bf (iii)} holds. Notice that when $f(x) = \dfrac{1}{2}\la Ax,x\ra + \la b,x\ra + c$, we have 
\begin{equation}\label{eq:quad_norm}
H_k(y_k-\ox) + \nabla f(\ox)-\nabla f(y_k) = A(y_k-\ox) + A\ox + b- Ay_k -b = 0. 
\end{equation}
Thus, combining \eqref{eq:connect1}--\eqref{eq:projzkzbar-2_modESN}, \eqref{eq:tilt_connect}, and \eqref{eq:quad_norm} gives us that
\begin{equation*}
\|x_{k+1}-\ox\|^2 = \|y_k -\ox - d_k\|^2\le \dfrac{1}{\kappa}\big\la H_k(y_k - \ox) + \nabla f(\ox) -\nabla f(y_k),y_k - \ox-d_k\big\ra= 0.
\end{equation*}
Hence, we have $x_{k+1} = \ox$ and thus $\varphi(x_{k+1}) = \varphi(\ox)$ after finitely many steps. The proof is complete.
\end{proof}
\vspace{0.05in}

\begin{Remark}\rm 
In the proof of Theorem~\ref{theo:quad-poly1}, since the estimates involving $(x_k,y_k,z_k,d_k)$ in \eqref{eq:y0xbar-rem-1}--\eqref{eq:k+1-less-epsilon} are valid whenever $x_k\in \mathbb{B}_{\varepsilon}(\ox)$, and the induction argument therein has already established that the entire sequence $\{x_k\}_{k=0,1,\ldots}$ remains in $\mathbb{B}_{\varepsilon}(\ox)$, it follows that the estimates in \eqref{eq:y0xbar-rem-1}--\eqref{eq:k+1-less-epsilon} hold for every $k=0,1,\ldots$
\end{Remark}

\vspace{0.05in}


\begin{Proposition}[Locally descent direction]
\label{prop:local-descent}
Let $x_0\in \mathbb{B}_{\varepsilon}(\ox)$, where $\varepsilon>0$ is chosen as in \eqref{eq:epsilon_choice}, and let $\{x_k\},\{d_k\}$ be the sequences generated by {\bf Algorithm~\ref{algo:modified-Dk}}. Suppose that $\bar{x}$ is the tilt stable minimizer of the function $\varphi$, i.e., \eqref{eq:tilt} holds for some $\kappa>0$. Then it holds that
\[
\varphi'(y_k;-d_k) =-\la z_k+\nabla f(y_k),d_k\ra \le -\kappa \|d_k\|^2\qquad \forall\, k=0,1,\ldots
\]
That is, $-d_k$, if nonzero, is a descent direction for $\varphi$ at $y_k$.
\end{Proposition}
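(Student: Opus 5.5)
The proof has two parts: an exact formula for the directional derivative, and then the optimality condition \eqref{eq:first-optim} combined with the curvature bound \eqref{eq:Noz_modESN}. By the Remark following Theorem~\ref{theo:quad-poly1}, every iterate stays in $\mathbb{B}_{\varepsilon}(\ox)$. So all the estimates \eqref{eq:y0xbar-rem-1}--\eqref{eq:k+1-less-epsilon} are available for every $k=0,1,\ldots$. In particular $d_k$ is well defined, and \eqref{eq:Noz_modESN} gives $\la\nabla^2 f(y_k)w,w\ra\ge\kappa\|w\|^2$ for all $w\in\Span\mathcal D_k$.

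\textbf{The directional derivative.} Since $d_k\in\mathcal D_k$, both $y_k$ and $y_k-d_k$ lie in the convex set $\partial g^*(z_k)$. Identity \eqref{eq:g-affine} then gives $g(y_k-\beta d_k)=g(y_k)-\beta\la z_k,d_k\ra$ for all $\beta\in[0,1]$. Dividing by $\beta$ and letting $\beta\downarrow0$ yields $g'(y_k;-d_k)=-\la z_k,d_k\ra$. Because $f$ is differentiable, $f'(y_k;-d_k)=-\la\nabla f(y_k),d_k\ra$. The two limits exist, so they add, giving
\[
\varphi'(y_k;-d_k)=-\la z_k+\nabla f(y_k),d_k\ra .
\]

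\textbf{The descent bound.} We test \eqref{eq:first-optim} with $d=0$. This is admissible because $0=y_k-y_k\in\mathcal D_k$, which uses $y_k\in\partial g^*(z_k)$; that inclusion follows from Fenchel--Young since $z_k\in\partial g(y_k)$. The test yields
\[
\la\nabla^2 f(y_k)d_k-(z_k+\nabla f(y_k)),-d_k\ra\ge0,
\]
that is, $-\la z_k+\nabla f(y_k),d_k\ra\le-\la\nabla^2 f(y_k)d_k,d_k\ra$. Since $d_k\in\mathcal D_k\subset\Span\mathcal D_k=L_k$, the curvature bound \eqref{eq:Noz_modESN} gives $\la\nabla^2 f(y_k)d_k,d_k\ra\ge\kappa\|d_k\|^2$. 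Combining the two inequalities gives the claim. If $d_k\neq0$, the right-hand side is strictly negative, so $-d_k$ is a descent direction for $\varphi$ at $y_k$.

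\textbf{Main obstacle.} The only delicate point is justifying that \eqref{eq:Noz_modESN} holds at every $k$, not just at $k=0$. This requires $(y_k,\nabla f(y_k)+z_k)\in\mathbb{B}_r(\ox,0)$ for each $k$. That in turn comes from $x_k\in\mathbb{B}_\varepsilon(\ox)$ through \eqref{eq:y0_modESN}--\eqref{eq:v0_modESN}, which the induction in Theorem~\ref{theo:quad-poly1} already guarantees. Everything else is direct.
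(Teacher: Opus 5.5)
Your proposal is correct and follows essentially the same route as the paper. Like the paper, you compute $\varphi'(y_k;-d_k)$ via the affine identity \eqref{eq:g-affine}, test the optimality condition \eqref{eq:first-optim} at $d=0\in\mathcal D_k$, and invoke the curvature bound \eqref{eq:Noz_modESN}; your explicit appeal to the Remark after Theorem~\ref{theo:quad-poly1}, to justify \eqref{eq:Noz_modESN} at every $k$, spells out a point the paper's proof leaves implicit.
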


\begin{proof}
Since $f$ and $g$ are convex, the composite function $\varphi$ is directionally differentiable. Fix $k\in \N \cup \{0\}$. Using \eqref{eq:g-affine}, the directional derivative of $\varphi$ at $y_k$ in the direction $-d_k $ can be computed as
\begin{equation}\label{eq:varphi_direct}
\begin{aligned}
\varphi'(y_k;-d_k) 
&= \lim_{\beta \to0^+}\dfrac{\varphi(y_k - \beta d_k)-\varphi(y_k)}{\beta} \\
&=\lim_{\beta \to0^+}\dfrac{f(y_k - \beta d_k)-f(y_k)}{\beta} + \lim_{\beta \to0^+}\dfrac{g(y_k - \beta d_k)-g(y_k)}{\beta}\\
&= -\la \nabla f(y_k),d_k\ra -\la z_k,d_k\ra= -\la z_k + \nabla f(y_k),d_k\ra.
\end{aligned}
\end{equation}
Because $0 = y_k - y_k \in y_k - \partial g^* (z_k)$, we have $0 \in \mathcal{D}_k$. Consequently, the first-order optimality condition in \eqref{eq:first-optim} yields
\[
\la \nabla^2 f(y_k)d_k-(z_k + \nabla f(y_k)), -d_k\ra\geq 0.
\]
Combining the above estimate with \eqref{eq:Noz_modESN}, we obtain
\begin{equation}\label{eq:optim_var}
\la z_k + \nabla f(y_k), d_k\ra\geq \la \nabla^2 f(y_k) d_k,d_k\ra \geq \kappa\|d_k\|^2.
\end{equation}
Hence, combining \eqref{eq:varphi_direct} and \eqref{eq:optim_var} gives 
\begin{equation*}
    \varphi' (y_k;-d_k) \le -\kappa \|d_k\|^2,
\end{equation*}
which proves that $-d_k$ is a descent direction of $\varphi$ at $y_k$ provided that $d_k\neq 0$.  
\end{proof}


\section{A globally convergent inexact Newton method with effective sets}\label{sec:global}

In this section, we develop a globalized inexact effective-set Newton method for solving problem~\eqref{p:CP} and provide its global convergence analysis. The method extends the locally convergent schemes of the previous section by allowing the Newton subproblem to be solved approximately while preserving the descent property and fast local convergence of the exact scheme. Unless otherwise stated, we impose in this section the following standing assumptions on the data of problem \eqref{p:CP}.

\begin{Assumption}\label{as:assumption} The following holds for problem \eqref{p:CP}:

\begin{enumerate}[\bf(i)]
\item $f:\XX\to \R$ is a twice continuously differentiable convex function whose gradient is Lipschitz continuous with Lipschitz modulus $\ell>0$.

\item $g:\XX\to \oR$ is a proper, lower semicontinuous and convex function that is continuous on its domain.

\item Given an initial point $x_0$, the level set $\mathcal L_0:=\{x\in X\mid \varphi(x)\le \varphi(x_0)\}$ is bounded and $\inf \varphi >-\infty$.
\end{enumerate}
\end{Assumption}

Given a current iterate $x_k$, let the approximation pair $(y_k,z_k)\in\operatorname{gph}\partial g$
be generated by the proximal-gradient step. Fix $\bar{\delta}>0$. At every iteration $k$ for which $y_k\neq x_k$, define
\[
\delta_k
:=
\min\left\{
\bar\delta;\,\frac{\|y_k-x_k\|}{\alpha}
\right\},
\quad \text{and} \quad
H_k:=\nabla^2f(y_k)+\delta_k \mathbb{I}_n,
\]
and consider the \emph{regularized effective-set subproblem}
\begin{equation}
    \min_{d\in \mathcal{D}_k}
    \quad
        \frac12\langle H_kd,d\rangle
        -\langle z_k+\nabla f(y_k),d\rangle,
    \quad \text{where }
    \mathcal{D}_k:=y_k-\partial g^*(z_k).
    \label{eq:regularized_subproblem}
\end{equation}
Since $\delta_k>0$, the matrix $H_k$ is
positive definite. Therefore, the objective function
\begin{equation*}
    \dfrac{1}{2}\la H_k d,d\ra - \la z_k + \nabla f(y_k),d\ra + \delta_{\mathcal{D}_k}(d)
\end{equation*}
in
\eqref{eq:regularized_subproblem} is strongly convex, and according to \cite[Theorem~5.25]{Beck-1storder}, the subproblem therein
admits a unique optimal solution $d_k^*$.

For each pair $(d,\xi)\in\operatorname{gph}N_{\mathcal{D}_k}$, denote the
\emph{KKT residual} of the regularized subproblem \eqref{eq:regularized_subproblem} by
\begin{equation}
    \mathcal{R}_k(d,\xi)
    :=
    H_kd-\bigl(z_k+\nabla f(y_k)\bigr)+\xi.
    \label{eq:KKT_residual}
\end{equation}
Observe that $\mathcal{R}_k(d,\xi)=0$ for some
$\xi\in N_{\mathcal{D}_k}(d)$ if and only if $d$ is the exact solution of
\eqref{eq:regularized_subproblem}. We thus use the nonnegative
\emph{residual number} $\|\mathcal{R}_k(d,\xi)\|$ as an
\emph{optimality measure} of an approximate solution $d$ to
\eqref{eq:regularized_subproblem}. Following the classical inexact Newton principle in~\cite{DES82}, we select
$d_k\in \mathcal{D}_k$ as an \emph{approximate solution} of
\eqref{eq:regularized_subproblem}, together with
$\xi_k\in N_{\mathcal{D}_k}(d_k)$, by imposing the following two estimates as
\emph{inexact conditions}:
\begin{equation}\label{eq:intro_inexact_cond}
\|\mathcal{R}_k(d_k,\xi_k)\|
\leq
\eta_k\|z_k+\nabla f(y_k)\|
\quad \text{and} \quad
\langle z_k + \nabla f(y_k),d_k\rangle
\geq
(1-\tau)\langle H_kd_k,d_k\rangle,
\end{equation}
where the \emph{forcing term} is defined by $\eta_k
:=\min\left\{\eta;\,c\|z_k+\nabla f(y_k)\|
\right\}$ for some constants $\eta\in(0,1)$ and $c>0$, and where $\tau\in \big(0,\frac{1-2\sigma}{2(1-\sigma)}\big)$, $\sigma\in (0,1/2)$. The first condition in \eqref{eq:intro_inexact_cond} is motivated by the classical inexact Newton framework of~\cite{DES82}, whereas the role of the second condition is discussed in Remark~\ref{rem:sufficient-decrease} below.

Since
\eqref{eq:regularized_subproblem} admits a unique exact
solution $d_k^*$, its first-order optimality condition ensures the
existence of some $\xi_k^*\in N_{\mathcal{D}_k}(d_k^*)$ such that
$\mathcal{R}_k(d_k^*,\xi_k^*)=0$. Hence, the exact solution pair
$(d_k^*,\xi_k^*)$ always fulfills both inexact conditions, and therefore an approximate solution pair $(d,\xi)$ satisfying \eqref{eq:intro_inexact_cond} always exists. Moreover, observe that both inexact conditions in \eqref{eq:intro_inexact_cond} do not involve the exact subproblem solution, they can be readily verified using quantities generated by the inner solver. In Proposition~\ref{prop:well-defined}(i), we further show that any inner solver generating pairs $(d_k^j,\xi_k^j)\in\operatorname{gph}N_{\mathcal{D}_k}$ whose KKT residuals \eqref{eq:KKT_residual} converge to zero produces an approximate solution satisfying both inexact conditions after finitely many inner iterations.

\begin{Remark}[Relations to inexact sufficient-decrease conditions]\label{rem:sufficient-decrease}
{\rm Although our algorithmic framework differs from the Successive Quadratic
Approximation method in~\cite{byrd2016inexact} and the proximal Newton-type
method in~\cite{mordukhovich2023globally}, the second inexact condition in
\eqref{eq:intro_inexact_cond} has a direct connection with the decrease
conditions used in these works. More precisely, when expressed in terms of
the local models $q_k$ and $\ell_k$ in \cite{byrd2016inexact}, our condition is equivalent to the
analogue of the {\em strengthened sufficient-decrease condition}
in~\cite[Equation~(4.8)]{byrd2016inexact}, with the additional requirement
that the trial point $\hat{x}$ belongs to the face $\partial g^*(z_k)$. Moreover, for
any nonzero step, it implies the {\em strict} model-decrease condition used in
\cite[Equation~(14)]{mordukhovich2023globally}. We make these relationships
precise below.
\begin{enumerate}[\bf (i)]
\item {\em In our setting, the second inexact condition in \eqref{eq:intro_inexact_cond} is equivalent to an analogue of the strengthened sufficient-decrease condition in {\rm \cite[Equation~(4.8)]{byrd2016inexact}}, with the trial point $\hat{x}$ additionally required to lie in $\partial g^*(z_k)$}. Indeed, let us define the analogous models of $q_k$ and $\ell_k$ in \cite[Equations~(2.1)~and~(2.8)]{byrd2016inexact}, using $y_k$ as the base point:
\[
\begin{aligned}
q_k(x)
&:=
f(y_k)
+\langle \nabla f(y_k),x-y_k\rangle
+\frac12\langle H_k(x-y_k),x-y_k\rangle
+g(x),\\
\ell_k(x)
&:=
f(y_k)
+\langle \nabla f(y_k),x-y_k\rangle
+g(x).
\end{aligned}
\]
 
The analogue of the second condition in \cite[Equation~(4.8)]{byrd2016inexact} in our setting is
\begin{equation}\label{eq:Byrd-4.8}
\exists\, \hat{x}\in \XX:\,\,\,q_k(\hat x)-q_k(y_k) \leq \zeta(\ell_k(\hat x) - \ell_k(y_k)) \quad \text{for some}\,\,\, \zeta \in (\sigma,1/2), \text{ where } \sigma\in (0,1/2).
\end{equation} 

Next, we recall our second inexact condition in \eqref{eq:intro_inexact_cond} as 
\begin{equation}\label{eq:second-inexact-condition}
    \exists\, d \in \mathcal{D}_k:\,\,\,\langle z_k + \nabla f(y_k),d\rangle\geq (1-\tau)\langle H_kd,d\rangle \quad \text{for some}\,\,\, \tau \in \left(0,\dfrac{1-2\sigma}{2(1-\sigma)}\right),\, \sigma \in (0,1/2).
\end{equation}
Since $\mathcal{D}_k = y_k - \partial g^* (z_k)$, condition \eqref{eq:second-inexact-condition} is equivalent to
\begin{equation}\label{eq:second-inex-2}
\begin{aligned}
    \exists\, \hat{x}\in \partial g^* (z_k):\,\,\, &\langle z_k + \nabla f(y_k),y_k - \hat{x}\rangle\geq (1-\tau)\langle H_k(y_k - \hat{x}),y_k - \hat{x}\rangle \\
    &\text{for some}\,\,\tau \in \left(0,\dfrac{1-2\sigma}{2(1-\sigma)}\right),\, \sigma \in (0,1/2).
\end{aligned}
\end{equation}
Let us show that \eqref{eq:second-inex-2} amounts to having 
\begin{equation}\label{eq:second-inex-2-Byrd}
    \exists\, \hat{x}\in \partial g^*(z_k):\,\,\,q_k(\hat x)-q_k(y_k) \leq \zeta(\ell_k(\hat x) - \ell_k(y_k)) \quad \text{for some}\,\,\, \zeta \in (\sigma,1/2), \text{ where } \sigma\in (0,1/2),
\end{equation}
which is the analogue of \eqref{eq:Byrd-4.8} restricted to trial points
$\hat{x}\in\partial g^*(z_k)$. Indeed, observe for each $\hat{x}\in \partial g^* (z_k)$ that
\begin{equation*}
\begin{aligned}
    q_k (\hat{x}) - q_k (y_k) &= \la \nabla f(y_k), \hat{x}-y_k\ra + \dfrac{1}{2} \la H_k (\hat{x}-y_k),\hat{x}-y_k\ra + g(\hat{x}) - g(y_k)\\
    &= \la \nabla f(y_k), \hat{x}-y_k\ra + \dfrac{1}{2} \la H_k (\hat{x}-y_k),\hat{x}-y_k\ra + \big(\la z_k,\hat{x}\ra - g^* (z_k)\big) -\big(\la z_k, y_k\ra - g^* (z_k) \big)\\
    &= \la  z_k +\nabla f(y_k), \hat{x}-y_k\ra + \dfrac{1}{2} \la H_k (\hat{x}-y_k),\hat{x}-y_k\ra, 
\end{aligned}
\end{equation*}
and similarly,
\begin{equation}\label{eq:ellk-}
    \ell_k (\hat{x}) - \ell_k (y_k) = \la  z_k +\nabla f(y_k), \hat{x}-y_k\ra.
\end{equation}
As a consequence,
\begin{equation}\label{eq:reform-49}
\begin{aligned}
    & \exists\, \zeta \in (\sigma,1/2),\, \sigma\in (0,1/2):\,\, q_k (\hat{x}) - q_k (y_k) \le \zeta (\ell_k (\hat{x}) - \ell_k (y_k)) \\
    \Leftrightarrow\,\, &\exists\, \zeta \in (\sigma,1/2),\, \sigma\in (0,1/2):\,\, \la  z_k +\nabla f(y_k), \hat{x}-y_k\ra + \dfrac{1}{2} \la H_k (\hat{x}-y_k),\hat{x}-y_k\ra \le \zeta \big(\la  z_k +\nabla f(y_k), \hat{x}-y_k\ra\big)\\
    \Leftrightarrow\,\, &\exists\, \zeta \in (\sigma,1/2),\, \sigma\in (0,1/2):\,\, \dfrac{1}{2(1-\zeta)}\la H_k (y_k-\hat{x}),y_k-\hat{x}\ra \le \la z_k + \nabla f(y_k),y_k-\hat{x}\ra.
\end{aligned}
\end{equation}
Letting $\tau:= 1-\dfrac{1}{2(1-\zeta)}$ where $\zeta <1/2$, we have the relationships:
\begin{equation}\label{eq:tau-zeta}
\begin{cases}
    1-\tau = \dfrac{1}{2(1-\zeta)},\\
    \sigma < \zeta < \dfrac{1}{2} \Longleftrightarrow 0 < \tau < \dfrac{1-2\sigma}{2(1-\sigma)} \quad \text{for } \sigma \in (0,1/2).
\end{cases}
\end{equation}
Combining \eqref{eq:reform-49}--\eqref{eq:tau-zeta} yields the equivalence between \eqref{eq:second-inex-2} and \eqref{eq:second-inex-2-Byrd}. 
\endproof

It is worth emphasizing that the additional restriction $\hat{x}\in \partial g^* (z_k)$ is intrinsic to our effective-set framework rather than being imposed merely to require a stronger model decrease than that in \eqref{eq:Byrd-4.8}. This restriction is essential for the affine identity \eqref{eq:g-affine}, which underlies our $f$-based line search; see Remark~\ref{re:LS} for further details.

The inner-product formulation \eqref{eq:second-inexact-condition} is particularly convenient in our analysis, since it directly yields the quantitative descent estimate used in Remark~\ref{re:descent} and in the unit-step acceptance result in Theorem~\ref{thm:full_step_acceptance_modESN}.

\item {\em For any nonzero step $d\in\mathcal{D}_k$ satisfying
\eqref{eq:second-inexact-condition}, the corresponding point $\hat{x}:= y_k - d$ also satisfies the inexact decrease condition in {\rm \cite[Equation~(14)]{mordukhovich2023globally}}}. Indeed, with $\widehat{x}=y_k-d\in\partial g^*(z_k)$, part~{\bf (i)}
gives some $\zeta\in(\sigma,1/2)$, where $\sigma \in (0,1/2)$, such that the inequality in \eqref{eq:second-inex-2-Byrd} holds. Since $H_k$ is positive definite and $d\neq0$ fulfills condition
\eqref{eq:second-inexact-condition}, it follows that
\[
\langle z_k+\nabla f(y_k),d\rangle
\geq(1-\tau)\langle H_kd,d\rangle>0.
\]
Therefore, by \eqref{eq:ellk-} we have
\[
\ell_k(\widehat{x})-\ell_k(y_k)
= \la z_k + \nabla f(y_k),\hat{x}-y_k\ra =
-\langle z_k+\nabla f(y_k),d\rangle<0,
\]
and hence \eqref{eq:second-inex-2-Byrd} yields $q_k(\widehat{x})-q_k(y_k)<0$. Thus 
\begin{equation}\label{eq:decrease}
q_k(\widehat{x})<q_k(y_k),
\end{equation}
which is the analogue of the inexact condition in
\cite[Equation~(14)]{mordukhovich2023globally}.
\endproof
\end{enumerate}

It is worth mentioning that condition \eqref{eq:decrease} also appeared in \cite[Equation (2.6)]{byrd2016inexact}, in which the authors showed that this condition can be used to prove the global convergence of their algorithm. However, to establish the unit-step acceptance result, see \cite[Theorem 4.6]{byrd2016inexact}, they instead use the strengthened condition \eqref{eq:Byrd-4.8}. An analogous role is played in our analysis by the condition \eqref{eq:second-inexact-condition}: through its equivalent face-restricted formulation established in part~{\bf (i)}, it provides the quantitative estimate used to prove eventual acceptance of the unit step by our Armijo line search in Theorem~\ref{thm:full_step_acceptance_modESN}.

By contrast, the unit-step acceptance result in \cite{mordukhovich2023globally} relies on a different mechanism. Their algorithm accepts a unit step through a separate test based on the proximal-gradient residual at the approximate subproblem solution and bypasses the Armijo line search whenever that test succeeds, see \cite[Algorithm 1]{mordukhovich2023globally} and the discussion therein. Thus, their analysis does not establish that \eqref{eq:decrease} alone guarantees eventual acceptance of the unit step by their Armijo rule.
}

\end{Remark}

We are now ready to present the globalized effective-set Newton method
in {\bf Algorithm~\ref{alg:Mod_regularized_ESN}}.

\begin{algorithm}[H]
\caption{Globalized Effective-set Newton method}
\label{alg:Mod_regularized_ESN}
\textbf{Input:} $x_0\in \dom g$, $\ell>0$, $\alpha\in(0,2/\ell)$, 
$\theta\in(0,1)$, $\sigma\in(0,1/2)$, $\bar\delta>0$, 
$\eta\in(0,1),\tau\in\left(0,\frac{1-2\sigma}{2(1-\sigma)}\right)$, and $c>0$.\\
\textbf{Repeat}
\begin{algorithmic}[1]

    \STATE Compute
    $y_k:=\prox_{\alpha g}\big(x_k-\alpha\nabla f(x_k)\big)$ and
    $\delta_k:=\min\left\{\bar\delta;\,\dfrac{\|y_k-x_k\|}{\al}\right\}$. If $y_k=x_k$, then stop; otherwise go to the next step.

    \STATE Compute
    \[
    z_k:=\frac{x_k-y_k}{\alpha}-\nabla f(x_k),\qquad
    \mathcal D_k:=y_k-\partial g^*(z_k),\qquad
    H_k:=\nabla^2 f(y_k)+\delta_k \mathbb{I}_n.
    \]

    \STATE If $z_k+\nabla f(y_k)\in N_{\mathcal D_k}(0)$, then set $d_k=0$;
    otherwise compute $d_k\in\mathcal D_k$ and
    $\xi_k\in N_{\mathcal D_k}(d_k)$ such that
    \begin{equation}\label{eq:inexact_subproblem_condition}
    \begin{cases}
    \big\|H_kd_k-(z_k+\nabla f(y_k))+\xi_k\big\|
    \le
    \min\big\{
        \eta\|z_k+\nabla f(y_k)\|;
        \, c\|z_k+\nabla f(y_k)\|^2
    \big\} \\ 
    \langle z_k + \nabla f(y_k),d_k\rangle
    \geq
    (1-\tau)\langle H_kd_k,d_k\rangle.
    \end{cases}
    \end{equation}

    \STATE Set $\beta_k=1$ and replace $\beta_k:=\theta\beta_k$ until
    \begin{equation}\label{eq:ls_f}
        f(y_k-\beta_kd_k)
        \le
        f(y_k)
        +\beta_k\langle z_k,d_k\rangle
        -\sigma\beta_k
        \langle z_k+\nabla f(y_k),d_k\rangle .
    \end{equation}

    \STATE Update $x_{k+1}:=y_k-\beta_kd_k$.

\end{algorithmic}
\end{algorithm}

In view of the relation in \eqref{eq:res}, the case $y_k = \prox_{\alpha g}\big(x_k-\alpha\nabla f(x_k)\big) = x_k$ is equivalent to
\begin{equation*}
    \dfrac{x_k - \alpha \nabla f(x_k) - x_k}{\alpha} \in \partial g (x_k),
\end{equation*}
i.e., $-\nabla f(x_k)\in \partial g(x_k)$. Hence, $x_k$ is a global minimizer of $\varphi=f+g$. We call such an iteration $k$ a \emph{stationary iteration}.

In the proposed {\bf Algorithm~\ref{alg:Mod_regularized_ESN}}, Step~3 computes an approximate solution of \eqref{eq:regularized_subproblem} satisfying the prescribed inexactness conditions. Before invoking the subproblem iterative solver, we first check whether
$z_k+\nabla f(y_k)\in N_{\mathcal D_k}(0).$ This test determines whether the zero direction is already the exact solution of the regularized subproblem. Indeed, since \(0\in\mathcal D_k\), the above inclusion is equivalent to
\[
0\in H_k(0)-\bigl(z_k+\nabla f(y_k)\bigr)+N_{\mathcal D_k}(0),
\]
which is precisely the first-order optimality condition for \eqref{eq:regularized_subproblem} at \(d=0\). Since \(H_k\) is positive definite, the regularized subproblem is strongly convex, and hence \(d=0\) is its unique optimal solution. Therefore, in this case no inner iteration is needed and we simply set \(d_k=0\). Otherwise, the exact solution \(d_k^*\) of \eqref{eq:regularized_subproblem} is necessarily nonzero, and an iterative solver is employed to compute an approximate direction satisfying the inexact conditions. This separation is also useful for the well-definedness of Step~3, because in the case that $z_k + \nabla f(y_k) \notin N_{\mathcal{D}_k}(0)$, we have  \(d_k^*\neq0\), which provides the strict inequality needed to show that the proposed inexactness conditions are satisfied after finitely many inner iterations.

Moreover, we also added a backtracking line search from $y_k$ along the direction
$-d_k$ in Step~4 of {\bf Algorithm~\ref{alg:Mod_regularized_ESN}}, which serves as a globalization strategy. A distinctive feature of this procedure is that the line-search
condition involves only the smooth component $f$. This is made possible by the
effective-set constraint $d_k\in\mathcal{D}_k$, which ensures that $g$ varies
affinely along the search segment, as shown in Remark~\ref{re:LS}. Consequently,
the contribution of $g$ to the objective change is known exactly, so each
backtracking trial requires only an evaluation of $f$, while the resulting
condition remains equivalent to an Armijo decrease condition for the original
composite objective $\varphi=f+g$. It is shown in Proposition~\ref{prop:well-defined}(ii) that such line search is well-defined.

\begin{Remark}[Armijo backtracking line-search on $f$ versus on $\varphi$]\label{re:LS}
\rm Although the backtracking line-search condition \eqref{eq:ls_f} is formulated solely in terms of the smooth component $f$, it is, in fact, equivalent to Armijo decrease conditions for the composite objective $\varphi = f+g$. Indeed, since $d_k\in \mathcal{D}_k$ and $y_k\in \partial g^* (z_k)$, recall from \eqref{eq:g-affine} that for any $\beta\in[0,1],$ 
\begin{equation*}
    g(y_k-\beta d_k)=g(y_k)-\beta\langle z_k,d_k\rangle.
\end{equation*}
Furthermore, the backtracking procedure terminates when there exists $\beta_k:= \theta^{m_k}$ $(m_k\ge 0)$ such that \eqref{eq:ls_f} is satisfied. Therefore, using the above identity and adding both sides of \eqref{eq:ls_f} with $g(y_k -\beta_k d_k)$ yields
\begin{equation}\label{eq:ls2_equiv}
\varphi(y_k-\beta_k d_k)
\le \varphi(y_k)-\sigma\beta_k\langle z_k+\nabla f(y_k),d_k\rangle.
\end{equation}
Thus, this step enforces descent of
$\varphi$ relative to $y_k$. The proposed formulations \eqref{eq:ls_f} are favorable compared to \eqref{eq:ls2_equiv} because, throughout the backtracking procedure, once the relevant inner products are computed before the loop, one only needs to evaluate
the smooth component $f$ and simple scalar multiplication on the right-hand side. 
\end{Remark}

Unlike Proposition~\ref{prop:local-descent}, where descent of the exact effective-set Newton direction
is established only within a neighborhood of the tilt-stable minimizer, the regularization $H_k = \nabla^2 f(y_k) + \delta_k \mathbb{I}_n$ and the second
inexact condition in {\bf Algorithm~\ref{alg:Mod_regularized_ESN}} guarantee descent at every non-stationary
iteration. Indeed, since $H_k \succeq \delta_k \mathbb{I}_n$ with $\delta_k>0$, this conclusion no longer relies on
tilt stability or on $x_k$ being sufficiently close to $\bar{x}$. The resulting global descent property is
summarized in the following remark.

\begin{Remark}[Globally descent direction under inexactness]\label{re:descent}
The descent property of the exact Newton direction is preserved under the inexact condition \eqref{eq:inexact_subproblem_condition}. Indeed, for each $k\in \N\cup \{0\}$, condition \eqref{eq:inexact_subproblem_condition} yields
\begin{equation}\label{eq:vkdk>0}
\la z_k+\nabla f(y_k),d_k \ra \geq (1-\tau)\la H_k d_k,d_k\ra \geq (1-\tau)\delta_k\|d_k\|^2 .
\end{equation}
Thus, recalling \eqref{eq:varphi_direct}, whenever $d_k\neq 0$ we have
\[
\varphi'(y_k;-d_k)
=-\la z_k+\nabla f(y_k),d_k\ra
\leq -(1-\tau)\delta_k\|d_k\|^2<0,
\]
so $-d_k$ remains a descent direction for $\varphi$ at $y_k$.
\end{Remark}

Before analyzing the convergence of {\bf Algorithm~\ref{alg:Mod_regularized_ESN}}, we need to verify that it is well-defined, i.e., by discussing the feasibility of the finite termination of the line-search \eqref{eq:ls_f} in Step 4. Although the feasibility of Step 3 has already been established at the beginning of this section, it remains to provide a constructive procedure for obtaining an approximate pair $(d_k,\xi_k)$ satisfying the required conditions. This issue is also addressed in the next proposition.

\begin{Proposition}\label{prop:well-defined} For each (non-stationary) iteration $k\in \N\cup\{0\}$ of {\bf Algorithm~\ref{alg:Mod_regularized_ESN}}, let $d_k^*$ be the exact solution of \eqref{eq:regularized_subproblem}. The following assertions hold:
\begin{enumerate}[\bf (i)]
\item For any pair $(\widehat{d}_k,\widehat{\xi}_k)\in \gph N_{\mathcal{D}_k}$, we have
\begin{equation}\label{eq:inexact_bound}
\|\hat d_k - d_k^*\| \leq \dfrac{\| H_k \hat d_k - (z_k+\nabla f(y_k)) + \hat\xi_k\|}{\delta_k}.
\end{equation}
Consequently, for any inner solver that generates $\{(d_k^j, \xi_k^j)\} \subset \gph N_{\mathcal{D}_k}$ such that 
\begin{equation}\label{eq:KKT_to0}
\| H_k d_k^j - (z_k+\nabla f(y_k)) + \xi_k^j\| \to 0 \quad \text{as}\quad j\to \infty,
\end{equation}
then the inexact conditions in \eqref{eq:inexact_subproblem_condition} are satisfied at $(d^j_k,\xi^j_k)$ for sufficiently large $j$.

\item The Armijo backtracking line search in Step 4 of {\bf Algorithm~\ref{alg:Mod_regularized_ESN}} terminates finitely. 
\end{enumerate}
\end{Proposition}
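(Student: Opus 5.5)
For part (i), the plan is to compare the optimality conditions of the exact and approximate pairs. Write $v_k:=z_k+\nabla f(y_k)$ and $r:=H_k\hat d_k-v_k+\hat\xi_k$. The exact solution satisfies $v_k-H_kd_k^*\in N_{\mathcal D_k}(d_k^*)$, and the approximate pair gives $\hat\xi_k=r-H_k\hat d_k+v_k\in N_{\mathcal D_k}(\hat d_k)$. Monotonicity of the normal cone mapping $N_{\mathcal D_k}$, applied to these two elements, yields
\[
\big\langle (r-H_k\hat d_k+v_k)-(v_k-H_kd_k^*),\,\hat d_k-d_k^*\big\rangle\ge 0,
\]
that is, $\langle H_k(\hat d_k-d_k^*),\hat d_k-d_k^*\rangle\le\langle r,\hat d_k-d_k^*\rangle$. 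Since $H_k\succeq\delta_k\mathbb I_n$, Cauchy--Schwarz then gives \eqref{eq:inexact_bound}.

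For the consequence in (i), only the nontrivial branch of Step~3 needs treatment, where $v_k\notin N_{\mathcal D_k}(0)$ and hence $d_k^*\ne0$. In that branch $v_k\neq 0$, because $0\in N_{\mathcal D_k}(0)$. So the right-hand side of the first condition in \eqref{eq:inexact_subproblem_condition} is a fixed positive number, and the first condition holds for all large $j$ by \eqref{eq:KKT_to0}. For the second condition, the estimate just proved gives $d_k^j\to d_k^*$. Testing the optimality inclusion of $d_k^*$ against $0\in\mathcal D_k$ gives
\[
\langle v_k,d_k^*\rangle\ge\langle H_kd_k^*,d_k^*\rangle>0.
\]
The map $d\mapsto\langle v_k,d\rangle-(1-\tau)\langle H_kd,d\rangle$ is continuous and takes the value at least $\tau\langle H_kd_k^*,d_k^*\rangle>0$ at $d_k^*$. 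It is therefore positive at $d_k^j$ for all large $j$, so the second condition also holds eventually. The strict positivity at $d_k^*$, which relies on $d_k^*\ne0$ and $\tau>0$, is the one delicate point, and it is exactly why the zero-test in Step~3 is made.

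For part (ii), if $d_k=0$ then \eqref{eq:ls_f} holds trivially with $\beta_k=1$. Otherwise Remark~\ref{re:descent} gives $\langle v_k,d_k\rangle\ge(1-\tau)\delta_k\|d_k\|^2>0$. Since $f$ is differentiable,
\[
f(y_k-\beta d_k)=f(y_k)-\beta\langle\nabla f(y_k),d_k\rangle+o(\beta).
\]
Using $\langle z_k,d_k\rangle-\sigma\langle v_k,d_k\rangle=-\langle\nabla f(y_k),d_k\rangle+(1-\sigma)\langle v_k,d_k\rangle$, the required inequality \eqref{eq:ls_f} becomes
\[
o(\beta)\le\beta(1-\sigma)\langle v_k,d_k\rangle .
\]
This holds for all sufficiently small $\beta$ because $(1-\sigma)\langle v_k,d_k\rangle>0$, so the backtracking $\beta=\theta^m$ terminates after finitely many reductions. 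Equivalently, one can invoke \eqref{eq:g-affine} and Remark~\ref{re:LS} and argue with the Armijo condition on $\varphi$ along a descent direction.
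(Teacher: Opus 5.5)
Your proof is correct and has the same skeleton as the paper's. The bound \eqref{eq:inexact_bound} comes from monotonicity of $N_{\mathcal D_k}$ applied to $\hat\xi_k$ and $\xi_k^*=v_k-H_kd_k^*$, together with $H_k\succeq\delta_k\mathbb I_n$. The inexact conditions then follow from $d_k^j\to d_k^*$, the positive threshold in the branch $v_k\notin N_{\mathcal D_k}(0)$, and a strict inequality at $d_k^*$ carried over by continuity.

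Two local steps differ from the paper.

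For the strict inequality at $d_k^*$, the paper compares objective values at $d_k^*$ and at $(1-\tau)d_k^*\in\mathcal D_k$. That gives only $\la v_k,d_k^*\ra\ge(1-\tau/2)\la H_kd_k^*,d_k^*\ra$. You instead test the optimality inclusion against $0\in\mathcal D_k$, the device the paper itself uses in \eqref{eq:optim_var}. This gives the sharper $\la v_k,d_k^*\ra\ge\la H_kd_k^*,d_k^*\ra$ in one line; either bound suffices.

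For (ii), the paper works with the directional derivative of $\varphi$ and transfers back to \eqref{eq:ls_f} through \eqref{eq:g-affine} and the equivalence with \eqref{eq:ls2_equiv}. Your first-order expansion of $f$ shows that finite termination of the $f$-based test needs only differentiability of $f$ and $\la v_k,d_k\ra>0$, independent of the face structure. The paper's route has the side benefit of showing the accepted step is an Armijo step for $\varphi$, which is needed later anyway (Remark~\ref{re:LS} and Proposition~\ref{prop:conv_modESN}).
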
 

\begin{proof}
Fix any iteration $k \in \N\cup\{0\}$, let us first show that {\bf (i)} holds. Indeed, since $d^*_k$ is the exact optimal solution of \eqref{eq:regularized_subproblem}, there exists $\xi_k^* \in N_{\mathcal{D}_k}(d^*_k)$ such that $H_kd^*_k - (z_k+\nabla f(y_k)) + \xi^*_k = 0$. Hence, we obtain
\begin{equation}
\begin{aligned}
\la H_k \hat d_k - (z_k+\nabla f(y_k)) + \hat\xi_k, \hat d_k - d^*_k\ra
&= \la H_k (\hat d_k-d^*_k) + \hat\xi_k  - \xi^*_k, \hat d_k - d^*_k\ra \\
&= \la H_k (\hat d_k-d^*_k),\hat d_k - d^*_k\ra + \la \hat\xi_k  - \xi^*_k, \hat d_k - d^*_k\ra \label{eq:in1}.
\end{aligned}
\end{equation}
The monotonicity of $N_{\mathcal{D}_k} = \partial \delta_{\mathcal{D}_k}$ gives $\la \hat\xi_k  - \xi^*_k, \hat d_k - d^*_k\ra \geq 0.$ Since $H_k \succeq \delta_k\mathbb{I}_n$, it follows from \eqref{eq:in1} that
\[
\| H_k \hat d_k - (z_k+\nabla f(y_k)) + \hat\xi_k\|\|\hat d_k - d^*_k\| \geq \la H_k \hat d_k - (z_k+\nabla f(y_k)) + \hat\xi_k, \hat d_k - d^*_k\ra \geq \delta_k \|\hat d_k-d^*_k\|^2.
\]
If $\hat d_k = d^*_k$, then \eqref{eq:inexact_bound} automatically holds true. Otherwise, since non-stationarity implies that $\delta_k >0$, dividing both sides of the above inequality by $\delta_k\|\hat d_k - d^*_k\|$ gives \eqref{eq:inexact_bound}. 

It remains to show that, for any inner solver that generates $(d_k^j, \xi_k^j) \in \gph N_{\mathcal{D}_k}$ such that \eqref{eq:KKT_to0} holds, then \eqref{eq:inexact_subproblem_condition} holds at $(d^j_k,\xi^j_k)$ for sufficiently large $j$. First, applying \eqref{eq:inexact_bound} with $\hat{d_k} = d_k^j$ and employing \eqref{eq:KKT_to0}, we have 
$$
d_k^j \to d_k^* \quad \text{as}\quad j\to \infty.
$$
Furthermore, it is worth noting that an inner solver is considered in Step 3 of {\bf Algorithm~\ref{alg:Mod_regularized_ESN}} only when $z_k + \nabla f(y_k) \notin N_{\mathcal{D}_k}(0)$. This implies that $z_k + \nabla f(y_k) \ne 0$ and thus
\[
\min\left\{
\eta\|z_k+\nabla f(y_k)\|;\,
\ c\|z_k+\nabla f(y_k)\|^2
\right\} >0.
\]
Since $\| H_k d_k^j - (z_k+\nabla f(y_k)) + \xi_k^j\| \to 0$ as $j\to\infty$, the first condition in \eqref{eq:inexact_subproblem_condition} is satisfied for sufficiently large $j$. Moreover, notice that if $d_k^* = 0$, we immediately deduce from the first-order optimality condition that $z_k + \nabla f(y_k) \in N_{\mathcal{D}_k}(0)$, which is a contradiction. This asserts that $d^*_k\ne0$. The admissible range of $\tau$ implies that $0 < 1-2\tau < 1-\tau < 1$. Hence $(1-\tau)d_k^* \in \mathcal{D}_k$, and optimality of problem \eqref{eq:regularized_subproblem} at $d_k^*$ yields
\begin{equation*}
    \dfrac{1}{2}\la H_k d_k^*,d_k^*\ra - \la z_k + \nabla f(y_k),d_k^*\ra \le \dfrac{1}{2}(1-\tau)^2\la H_k d_k^*,d_k^*\ra - (1-\tau)\la z_k + \nabla f(y_k),d_k^*\ra.
\end{equation*}
From this, it is obvious that 
\begin{equation*}
    \dfrac{1}{2}\tau (2-\tau)\la H_k d_k^*,d_k^*\ra \le \tau \la z_k + \nabla f(y_k),d_k^*\ra,
\end{equation*}
and hence
\begin{equation*}
    (1-\tau)\la H_k d_k^*,d_k^*\ra < \left(1-\dfrac{\tau}{2}\right)\la H_k d_k^*,d_k^*\ra \le \la z_k + \nabla f(y_k),d_k^*\ra.
\end{equation*}
Since the estimate is strict and we have $d_k^j \to d_k^*$ as $j\to \infty$, there exists $j\in \N$ large enough so that 
\begin{equation*}
    (1-\tau)\la H_k d^j_k,d^j_k\ra < \la z_k + \nabla f(y_k),d_k^j\ra,
\end{equation*}
and thereby the second condition in \eqref{eq:inexact_subproblem_condition} is also satisfied for sufficiently large $j$.

Next, we show that {\bf (ii)} holds. If $d_k = 0$, then the line search condition is immediately satisfied and hence the line search step terminates. Consider the case when $d_k \ne 0$, it follows from \eqref{eq:varphi_direct} and \eqref{eq:vkdk>0} that
\begin{equation*}
    \lim_{\beta\to 0^+} \left(\dfrac{\varphi (y_k - \beta d_k) - \varphi (y_k)}{\beta} + \la z_k + \nabla f(y_k),d_k\ra \right) = 0 < (1-\sigma) \la z_k + \nabla f(y_k),d_k\ra,
\end{equation*}
which thus yields
\[
\varphi (y_k - \beta d_k) - \varphi (y_k) < -\sigma \beta \la z_k + \nabla f(y_k),d_k\ra \quad \text{for all}\quad \beta>0 \text{ sufficiently small}.
\]
In particular, condition \eqref{eq:ls2_equiv} holds true for some $\beta_k:= \theta^{m_k}$, where $m_k$ is large enough. Thus by the equivalence between \eqref{eq:ls2_equiv} and \eqref{eq:ls_f}, the Armijo backtracking line search must terminate after finitely many steps.
The proof is complete.
\end{proof}

In the next result, we present several convergence properties of {\bf Algorithm~\ref{alg:Mod_regularized_ESN}}.

\begin{Proposition}\label{prop:conv_modESN} 
Let $\{x_k\}$ be the sequence generated by {\bf Algorithm~\ref{alg:Mod_regularized_ESN}} and suppose that each $x_k$ is not a stationary point of $\varphi$. Then, the following holds:
\begin{enumerate}[\bf(i)]

\item The objective value sequence $\{ \varphi(x_k)\}$ satisfies the following sufficient decrease property:
\begin{equation}\label{eq:suff_descent}
\varphi(x_{k+1})\le \varphi(x_k)-C\|y_k-x_k\|^2 -\sigma\beta_k\langle z_k+\nabla f(y_k),d_k\rangle, 
\end{equation}
for all $C\in\left(0,\frac{1}{\alpha} - \frac{\ell}{2}\right)$. Consequently, it holds that $$\sum_{k=0}^{\infty}\|y_k-x_k\|^2<\infty \quad \text{and}\quad y_k-x_k \to 0 \,\,\text{ as }\,\,k\to \infty.
$$
\item Every accumulation point of $\{x_k\}$ is a minimizer of $\varphi$. As a consequence, if $\ox$ is the unique minimizer of $\varphi$, then $x_k \to \ox$ as $k\to \infty$.

\end{enumerate}
\end{Proposition}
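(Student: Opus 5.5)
The plan is to split the decrease from $x_k$ to $x_{k+1}$ into two pieces: the proximal-gradient step $x_k\mapsto y_k$, and the Newton/line-search step $y_k\mapsto x_{k+1}$. For the first piece we use the standard sufficient-decrease estimate of the forward--backward step. By \eqref{eq:res}, $y_k$ minimizes $\la\nabla f(x_k),y-x_k\ra+\frac{1}{2\alpha}\|y-x_k\|^2+g(y)$. This objective is $\frac1\alpha$-strongly convex, so comparing its values at $y_k$ and at $x_k$ gives
\[
g(y_k)+\la\nabla f(x_k),y_k-x_k\ra+\tfrac{1}{\alpha}\|y_k-x_k\|^2\le g(x_k).
\]
Equivalently, one can use $z_k\in\partial g(y_k)$ directly: $g(x_k)\ge g(y_k)+\la z_k,x_k-y_k\ra$, and $\la z_k,x_k-y_k\ra=\frac1\alpha\|x_k-y_k\|^2-\la\nabla f(x_k),x_k-y_k\ra$. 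Combining this with the descent lemma $f(y_k)\le f(x_k)+\la\nabla f(x_k),y_k-x_k\ra+\frac{\ell}{2}\|y_k-x_k\|^2$ yields
\[
\varphi(y_k)\le\varphi(x_k)-\Big(\tfrac1\alpha-\tfrac\ell2\Big)\|y_k-x_k\|^2,
\]
and the coefficient $\frac1\alpha-\frac\ell2$ is positive because $\alpha<2/\ell$. Any $C$ in the stated range then works. For the second piece we invoke Remark~\ref{re:LS}: the accepted line search \eqref{eq:ls_f} is equivalent to \eqref{eq:ls2_equiv}, namely $\varphi(x_{k+1})\le\varphi(y_k)-\sigma\beta_k\la z_k+\nabla f(y_k),d_k\ra$. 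Adding the two inequalities gives \eqref{eq:suff_descent}.

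For the summability claim in (i), note that the last term in \eqref{eq:suff_descent} is nonnegative by \eqref{eq:vkdk>0}. Hence $\varphi(x_{k+1})\le\varphi(x_k)-C\|y_k-x_k\|^2$. Telescoping and using $\inf\varphi>-\infty$ from Assumption~\ref{as:assumption}(iii) gives $\sum_k\|y_k-x_k\|^2\le(\varphi(x_0)-\inf\varphi)/C<\infty$, and therefore $y_k-x_k\to0$. The same monotonicity shows that $\{x_k\}\subset\mathcal L_0$, which is bounded, so accumulation points exist. A small point to verify is that $x_0\in\dom g$, so that $\varphi(x_0)<\infty$; this is part of the input of the algorithm.

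For (ii), let $x_{k_j}\to\hat x$. Since $y_{k_j}-x_{k_j}\to0$, we also have $y_{k_j}\to\hat x$. By definition,
\[
z_{k_j}+\nabla f(y_{k_j})=\frac{x_{k_j}-y_{k_j}}{\alpha}+\nabla f(y_{k_j})-\nabla f(x_{k_j}),
\]
and the right-hand side tends to $0$ by the Lipschitz continuity of $\nabla f$. Thus $(y_{k_j},z_{k_j}+\nabla f(y_{k_j}))\in\gph\partial\varphi$ converges to $(\hat x,0)$. Since $\partial\varphi=\nabla f+\partial g$ is the subdifferential of a proper l.s.c.\ convex function, its graph is closed, so $0\in\partial\varphi(\hat x)$ and $\hat x$ is a global minimizer. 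An alternative route is the continuity of $\prox_{\alpha g}$, which lets us pass to the limit in $y_k=\prox_{\alpha g}(x_k-\alpha\nabla f(x_k))$ to obtain $\hat x=\prox_{\alpha g}(\hat x-\alpha\nabla f(\hat x))$. For the final claim, if $\ox$ is the unique minimizer, then the bounded sequence $\{x_k\}$ has $\ox$ as its only accumulation point, and hence $x_k\to\ox$.

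I expect the only delicate step to be the first decrease estimate, and specifically getting the constant right so that every $C<\frac1\alpha-\frac\ell2$ is admissible. This works directly from the subgradient inequality for $z_k$ as above. Everything else is bookkeeping with the earlier remarks.
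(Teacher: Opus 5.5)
Your proposal is correct and follows the paper's proof essentially step for step. The paper also combines the subgradient inequality for $z_k\in\partial g(y_k)$ with the descent lemma to get $\varphi(y_k)\le\varphi(x_k)-C\|y_k-x_k\|^2$, uses the line-search equivalence \eqref{eq:ls2_equiv} for the Newton step, telescopes using nonnegativity from \eqref{eq:vkdk>0}, and passes to the limit via closedness of a subdifferential graph. The only cosmetic difference is that you pass to the limit in $\gph\partial\varphi$ with $z_{k_j}+\nabla f(y_{k_j})\to 0$, whereas the paper passes to the limit in $\gph\partial g$ with $z_{k_j}\to-\nabla f(x^*)$; the two are equivalent.
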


\begin{proof}
Fix a non-stationary iteration $k\in\N\cup \{0\}$. Let us prove {\bf (i)}. Since $g$ is convex and $z_k\in\partial g(y_k)$, we derive the estimate
\[
g(y_k)-g(x_k) \le \langle z_k,y_k - x_k\ra = \Big\langle \dfrac{x_k - y_k}{\alpha} - \nabla f(x_k), y_k - x_k\Big\rangle =-\langle\nabla f(x_k),y_k-x_k\ra
- \frac1\alpha\|y_k-x_k\|^2.
\]
On the other hand, since $\nabla f$ is $\ell$-Lipschitz continuous, we have
\[
f(y_k) \le f(x_k) +\langle\nabla f(x_k),y_k-x_k\ra+ \frac {\ell}2\|y_k-x_k\|^2.
\]
Adding the last two inequalities yields 
\[
\varphi(y_k) \le \varphi(x_k)-C\|y_k-x_k\|^2 \quad \text{for any } C\in\left(0,\dfrac{1}{\alpha} - \dfrac{\ell}{2}\right).
\]
Using the above estimate, together with Proposition~\ref{prop:well-defined}(ii) and Remark~\ref{re:LS}, one deduces that
\begin{align*}
\varphi(x_{k+1})
=\varphi(y_k-\beta_k d_k)
&\le \varphi(y_k)-\sigma\beta_k\langle z_k+\nabla f(y_k),d_k\rangle
\leq
\varphi(x_k)-C\|y_k-x_k\|^2-\sigma\beta_k\langle z_k+\nabla f(y_k),d_k\rangle,
\end{align*}
in which $\la z_k + \nabla f(y_k),d_k\ra \ge 0$ by Remark~\ref{re:descent}. Thus, we conclude that the sufficient decrease estimate \eqref{eq:suff_descent} is satisfied and $\{\varphi(x_k)\}$ is a decreasing sequence.  Moreover, since $\inf\varphi>-\infty$, summing \eqref{eq:suff_descent} over $k=0,1,\ldots,N$ $(N\in \N)$ gives
\[
C\sum_{k=0}^{N}\|y_k -x_k\|^2
\le
\varphi(x_0)-\varphi(x_{N+1})
\le
\varphi(x_0)-\inf\varphi.
\]
Letting $N\to\infty$, we obtain $\sum_{k=0}^{\infty}\|y_k-x_k\|^2<\infty$, which also implies that $y_k-x_k \to 0$.

To show {\bf (ii)}, note that since $\varphi(x_k)\le \varphi(x_0)$ for all $k\in \N$ and $\mathcal L_0$ is bounded, the sequence $\{x_k\}$ is also bounded. Take $\{x_{k_j}\}$ to be a convergent subsequence whose limit is $x^*$. Since $y_k - x_k\to0$, we have
\[
y_{k_j} \to x^*  \quad \text{and} \quad  z_{k_j}=\dfrac{x_{k_j}-y_{k_j}}{\alpha}-\nabla f(x_{k_j})\to -\nabla f(x^*).
\]
Since $z_{k_j}\in\partial g(y_{k_j})$ and $\operatorname{gph}\partial g$ is a closed set, it follows from the above that 
\begin{equation*}
    -\nabla f(x^*) \in \partial g(x^*),
\end{equation*}
i.e., $x^*\in \arg\min \varphi$. Hence, every accumulation point of $\{x_k\}$ is a minimizer to $\varphi$. The proof is complete.
\end{proof}

\begin{Remark}[Role of the globalization mechanism]\rm 
The global convergence of {\bf Algorithm~\ref{alg:Mod_regularized_ESN}}, as established in Proposition~\ref{prop:conv_modESN}(ii), relies essentially on the
combination of the proximal-gradient step (or, ISTA) and the damped Newton correction. Indeed, the global
Lipschitz continuity of $\nabla f$, together with $\alpha\in(0,2/\ell)$, yields the descent estimate
\[
\varphi(y_k)\leq \varphi(x_k)-C\|y_k-x_k\|^2,
\qquad
C\in\left(0,\frac1\alpha-\frac{\ell}{2}\right),
\]
for every iterate $x_k$, without requiring $x_k$ to be close to the solution. The Armijo line search condition \eqref{eq:ls_f}
then guarantees that the Newton correction does not destroy this decrease, leading to the sufficient decrease property \eqref{eq:suff_descent}:
\begin{equation*}
\varphi(x_{k+1})\le \varphi(x_k)-C\|y_k-x_k\|^2 -\sigma\beta_k\langle z_k+\nabla f(y_k),d_k\rangle. 
\end{equation*}
Consequently, we have $\|y_k-x_k\|\to0$, which is the key estimate used in
Proposition~\ref{prop:conv_modESN}(ii) to show that every accumulation point is a minimizer.

This is the main distinction from the local {\bf Algorithm~\ref{algo:modified-Dk}}, whose convergence
requires the initial point to lie sufficiently close to the tilt-stable minimizer so that the local
Newton estimates remain valid. {\bf Algorithm~\ref{alg:Mod_regularized_ESN}} removes this localization
requirement through the above globalization mechanism; once the iterates are sufficiently close to
the tilt-stable minimizer, Theorem~\ref{thm:full_step_acceptance_modESN} below shows that $\beta_k=1$
eventually, and the local fast Newton behavior is recovered.
\end{Remark}

Having clarified the role of our globalization mechanism, it is useful to compare it with the
widely used globalization strategy based on the Forward--Backward Envelope (FBE).

\begin{Remark}[Comparisons with FBE-based globalization]\label{rem:FBE_comparison}
{\rm
In the development of globally convergent nonsmooth Newton methods for composite setting, one of the most popular globalization strategies is based on the {\em Forward–Backward Envelope} (FBE) of $\varphi=f+g$, which is defined, for $\gamma>0$, as
\[
    \varphi_\gamma(x)
    :=
    f(x)-\frac{\gamma}{2}\|\nabla f(x)\|^2
    +e_{\gamma g}\big(x-\gamma\nabla f(x)\big),
\]
via the {\em Moreau envelope}
\begin{equation*}
    e_{\gamma g}(x):= \inf_{y\in \XX}\left\{g(y) + \frac{1}{2\gamma}\|y-x\|^2 \right\},\qquad x\in \XX.
\end{equation*}
The FBE was introduced in \cite{P13} under the name composite Moreau envelope and was subsequently developed as a smooth reformulation for designing globally convergent Newton-type methods. In particular, it has been employed in truncated Newton methods \cite{P14}, forward--backward quasi-Newton methods \cite{S17, T18}. More recently, FBE-based globalization has also been incorporated into coderivative-based generalized Newton methods \cite{KhanhMordukhovichPhat24, KhanhMordukhovichPhat25} and SCD semismooth\(^*\) Newton methods \cite{GfrererCOA25,Gfrerer25}. In these approaches, the FBE serves as a merit function, to globalize locally fast Newton-type directions. Our globalization differs fundamentally from these FBE-based strategies in the following aspects.
\begin{enumerate}[(i)]
    \item When $g\equiv0$, the Moreau envelope term vanishes, i.e. $e_{\gamma g}(x-\gamma \nabla f(x))=0$. Moreover, the FBE and its gradient are
    \[
        \varphi_\gamma(x)
        =f(x)-\frac{\gamma}{2}\|\nabla f(x)\|^2 \quad \text{and} \quad \nabla \varphi_\gamma(x) = \left(\mathbb{I} - \gamma \nabla^2 f(x)\right) \nabla f(x),
    \]
    respectively. Consider now a standard Armijo line search applied to the FBE from a point $y$ along a direction $-d$.  Its acceptance criterion is
    \[
    \varphi_\gamma(y - \beta d) \leq \varphi_\gamma(y) - \sigma\beta \left\la  \left(\mathbb{I} - \gamma \nabla^2 f(y)\right) \nabla f(y), d\right\ra,
    \]
    which is equivalent to 
    \begin{align}
    f(y-\beta d)
    -\frac{\gamma}{2}\bigl\|\nabla f(y-\beta d)\bigr\|^2
    \nonumber\leq
    f(y)
    -\frac{\gamma}{2}\|\nabla f(y)\|^2
    -\sigma\beta\langle \nabla f(y),d\rangle
    +\sigma\beta\gamma
    \left\langle
    \nabla^2 f(y)\nabla f(y),d
    \right\rangle .
    \end{align}
    
    Hence, even when $g\equiv0$, an FBE-based globalization does not recover the classical Armijo line-search criterion used in damped Newton's methods for
    smooth optimization, since the gradient-norm and Hessian-dependent
    terms remain present.
    
    Our globalization, in contrast, reduces exactly to the classical Armijo line-search condition for the smooth objective $f$. Indeed, in case $g \equiv 0$, for any fixed iteration $k$, observe that 
    \[
    y_k = x_k - \alpha \nabla f(x_k) \quad \text{and} \quad z_k =0.
    \]
    Using the equivalent formulation \eqref{eq:ls2_equiv} of \eqref{eq:ls_f}, as established in Remark~\ref{re:LS}, our backtracking procedure
    terminates once
    \[
    f(y_k - \beta_kd_k) \leq f(y_k) -\sigma \beta_k\la\nabla f(y_k),d_k\ra,
    \]
    which has exactly the Armijo acceptance form used in the classical globalization of Newton's method, with $y_k$ as the base point and $-d_k$ as the search direction. Thus, in the smooth case, our globalization retains the classical Armijo line-search structure, whereas an FBE-based globalization generally does not.
    
    \item In the FBE-based globalization
    \cite[Algorithm~4]{KhanhMordukhovichPhat25}, the parameter $\lambda$
    determines both the proximal-gradient point and the envelope
    $\varphi_\lambda$, requiring $\lambda\in(0,1/\ell)$ in the convex setting of $g$ considered here. In contrast, the constant
    $\alpha$ in our method is used only to generate $(y_k,z_k)$, while
    globalization is imposed directly through the $f$-based line search.
    This permits the larger range $\alpha\in(0,2/\ell)$.

    \item Each trial evaluation of the FBE requires evaluations of
    $\nabla f$, $f$, $g$, and $\prox_{\lambda g}$, whereas our backtracking
    only evaluates $f$ at the trial point, apart from scalar operations.
    Thus, our line search has a lower per-trial oracle cost, while
    Remark~\ref{re:LS} shows that it still guarantees descent of the
    composite objective.
    \item A limitation of our current framework is its stronger structural requirement on the nonsmooth term. In particular, our local quadratic convergence rate analysis in Theorem~\ref{theo:quad-poly2} is developed for polyhedral convex regularizers $g$, whereas FBE-based globalization has been developed for substantially broader classes of nonsmooth functions, including nonconvex, prox-bounded, prox-regular regularizers; see, e.g., \cite{GfrererCOA25,Gfrerer25,KhanhMordukhovichPhat25}. Thus, the lower per-trial cost of our $f$-based line search comes at the expense of a more specialized framework in the present analysis.
\end{enumerate}
}
\end{Remark}

The following result establishes that the unit step-size must be accepted when the iterates are sufficiently close to the tilt-stable minimizer $\ox$. Furthermore, this full step is always accepted when the smooth component $f$ is quadratic, without requiring the iterates to lie in a small neighborhood of $\ox$. These results rely on a uniform positive-curvature bound for $f$ at $y_k$ along directions in the effective set $\mathcal{D}_k = y_k- \partial g^* (z_k)$, i.e.,
\begin{equation*}
    \exists\, \kappa >0:\quad \la \nabla^2 f(y_k)w,w\ra \ge \kappa\|w\|^2 \quad \text{for all}\quad w\in \mathcal{D}_k.
\end{equation*}
As see later in the proof below, this relation follows from the tilt stability at $\ox$ and the inclusion $\mathcal{D}_k \subset \mathrm{Ker}\, \partial^2 g(y_k|\,z_k)$ established in Proposition~\ref{prop:D_k}(ii).

\begin{Theorem}[Acceptance of unit step size]
\label{thm:full_step_acceptance_modESN}
Let $\{x_k\}$ be the sequence generated by {\bf Algorithm~\ref{alg:Mod_regularized_ESN}} and $\bar{x}$ is the tilt-stable minimizer of the function $\varphi$. Then, the following holds:
\begin{enumerate}[\bf(i)]

\item There exists $k_0\equiv k_0(\ox)$ such that, at every non-stationary iteration $k\ge k_0$, the Armijo backtracking line search in Step 4 accepts $\beta_k =1$, i.e.,
\[
f(y_k- d_k)
\leq
f(y_k)
+\langle z_k,d_k\rangle
-\sigma\langle z_k+\nabla f(y_k),d_k\rangle.
\]
\item  In case $f(x) = \dfrac{1}{2}\la Ax,x\ra + \la b,x\ra + c$, where $A \in \R^{n \times n}$ is a symmetric positive semidefinite matrix, $b\in \R^n$ and $c\in \R$, then the backtracking stepsize $\beta_k = 1$ is accepted at every non-stationary iteration. 
\end{enumerate}
\end{Theorem}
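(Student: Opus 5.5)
Both parts rest on an exact Taylor expansion of $f$ along the segment $[y_k, y_k-d_k]$ combined with the second inexact condition in \eqref{eq:inexact_subproblem_condition}. Write $v_k:=z_k+\nabla f(y_k)$ and $H_k=\nabla^2 f(y_k)+\delta_k\mathbb I_n$. The unit-step test reads
\[
f(y_k-d_k)-f(y_k)+\la \nabla f(y_k),d_k\ra\le (1-\sigma)\la v_k,d_k\ra ,
\]
because $\la z_k,d_k\ra-\sigma\la v_k,d_k\ra+\la\nabla f(y_k),d_k\ra=(1-\sigma)\la v_k,d_k\ra$. By the integral form of Taylor's theorem, the left-hand side equals
\[
\tfrac12\la\nabla^2 f(y_k)d_k,d_k\ra+\rho_k,\qquad \rho_k:=\int_0^1(1-t)\la[\nabla^2 f(y_k-td_k)-\nabla^2 f(y_k)]d_k,d_k\ra\,dt .
\]
The second inexact condition gives $\la\nabla^2 f(y_k)d_k,d_k\ra\le\la H_kd_k,d_k\ra\le \frac{1}{1-\tau}\la v_k,d_k\ra$. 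It therefore suffices to prove
\[
\rho_k\le\Big(1-\sigma-\tfrac{1}{2(1-\tau)}\Big)\la v_k,d_k\ra .
\]
The bracket is strictly positive: the constraint $\tau<\frac{1-2\sigma}{2(1-\sigma)}$ is exactly equivalent to $\frac{1}{2(1-\tau)}<1-\sigma$, via the computation already carried out in \eqref{eq:tau-zeta}.

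\textbf{Part (ii).} When $f$ is quadratic, $\nabla^2 f\equiv A$, so $\rho_k=0$. Since $\la v_k,d_k\ra\ge0$ by \eqref{eq:vkdk>0}, the inequality holds at every non-stationary iteration. No tilt stability or proximity to $\ox$ is needed here; if $d_k=0$ the test is trivial.

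\textbf{Part (i).} First, since $\ox$ is a tilt-stable and hence unique minimizer, Proposition~\ref{prop:conv_modESN}(ii) gives $x_k\to\ox$. Then $y_k\to\ox$ because $y_k-x_k\to0$, and $z_k\to-\nabla f(\ox)=\oz$ by the formula for $z_k$ and continuity of $\nabla f$. Consequently $(y_k,v_k)\in\gph\partial\varphi\cap\mathbb B_r(\ox,0)$ for all $k\ge k_0$.

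Second, I repeat the argument leading to \eqref{eq:Noz_modESN}: combine \eqref{eq:tilt}, the sum rule, and $\mathcal D_k\subset\Ker\partial^2 g(y_k|z_k)$ from Proposition~\ref{prop:D_k}(ii), or equivalently the identity $\Ker\partial^2 g(y_k|z_k)=\para\partial g^*(z_k)$. This yields $\la\nabla^2 f(y_k)w,w\ra\ge\kappa\|w\|^2$ for $w\in\Span\mathcal D_k$. The second inexact condition then gives
\[
\la v_k,d_k\ra\ge(1-\tau)\kappa\|d_k\|^2 .
\]

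Third, I need $d_k\to0$. Test the subproblem against $0\in\mathcal D_k$ as in the proof of Proposition~\ref{prop:local-descent}, using $\la\xi_k,0-d_k\ra\le0$ and the residual bound $\|\mathcal R_k\|\le\eta\|v_k\|$. This gives $\kappa\|d_k\|^2\le\la H_kd_k,d_k\ra\le\la v_k,d_k\ra+\|\mathcal R_k\|\|d_k\|$, hence
\[
\|d_k\|\le\frac{(1+\eta)\|v_k\|}{\kappa}\to0 .
\]
Fourth, for $k$ large, $y_k-td_k$ lies in a ball around $\ox$ on which $\nabla^2 f$ is uniformly continuous, so $\rho_k\le\frac12\omega(\|d_k\|)\|d_k\|^2$ with $\omega(s)\to0$ as $s\to0$. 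Combining this with the curvature bound of the second step, the required inequality holds once
\[
\tfrac12\omega(\|d_k\|)\le\Big(1-\sigma-\tfrac{1}{2(1-\tau)}\Big)(1-\tau)\kappa ,
\]
which is true for all large $k$.

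The main obstacle is making the curvature bound uniform along the actual iterates rather than only near $\ox$ in the abstract. Concretely, one must justify that the tilt-stability inequality \eqref{eq:tilt} transfers to $\nabla^2 f(y_k)$ on $\Span\mathcal D_k$ for the pairs $(y_k,v_k)$ generated by the global method, and that $d_k\to0$. Both follow from the convergence $(y_k,v_k)\to(\ox,0)$ supplied by Proposition~\ref{prop:conv_modESN}.
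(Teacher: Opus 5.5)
Your proof is correct and follows essentially the same route as the paper. You reduce the unit-step test, via Taylor's formula and the second inexact condition, to the margin $(1-\sigma)(1-\tau)>\tfrac12$. You get the uniform curvature $\langle\nabla^2 f(y_k)d_k,d_k\rangle\ge\kappa\|d_k\|^2$ from tilt stability together with $\mathcal D_k\subset\Ker\partial^2 g(y_k|z_k)$, deduce $d_k\to0$, and absorb the Hessian variation; part (ii) is the same exact computation with $\rho_k=0$. The differences from the paper are cosmetic: you use the integral remainder with a continuity modulus instead of the mean-value form with a Lipschitz-type bound. Your derivation of $d_k\to0$ is also slightly longer than needed, since it follows directly from Cauchy--Schwarz applied to $\langle v_k,d_k\rangle\ge(1-\tau)\kappa\|d_k\|^2$.
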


\begin{proof} 
First, let us show that {\bf (i)} holds. Indeed, by the characterization of tilt stability in \eqref{eq:tilt} and the local Lipschitz continuity of $\nabla^2 f$ around $\ox$, we find positive constants $\kappa,r>0$ such that 
\begin{equation}\label{eq:unit}
\begin{aligned}
    &\la z,w\ra \ge \kappa \|w\|^2 \quad \forall\, z\in \partial^2 \varphi (x|\, v)(w),\ \forall\,
    (x,v)\in \gph \partial \varphi\cap \mathbb{B}_{r}(\ox,0),\\
    &\|\nabla^2 f(x)-\nabla^2 f(y)\| \le \kappa \left( (1-\sigma)(1-\tau)-\dfrac{1}{2}\right), \quad \forall\, x,y\in \mathbb{B}_{r}(\ox),
\end{aligned}
\end{equation}
where $(1-\sigma)(1-\tau) > \frac{1}{2}$ by the choice of $\tau$ and $\sigma$ in {\bf Algorithm~\ref{alg:Mod_regularized_ESN}}.

Since $\ox$ is the unique minimizer to problem \eqref{p:CP}, Proposition~\ref{prop:conv_modESN}(ii) yields the convergence $(x_k,y_k) \to (\ox,\ox)$ as $k\to \infty$. Further, we have 
\begin{equation}\label{eq:vkto0}
    v_k := z_k + \nabla f(y_k) =  \dfrac{x_k - y_k}{\alpha} - \nabla f(x_k) + \nabla f(y_k) \to 0 \quad \text{as}\quad k\to \infty.
\end{equation}
In particular, the pair $(y_k,v_k)$ belongs to $\gph \partial \varphi\cap \mathbb{B}_r (\ox,0)$ for sufficiently large $k\in \N$. Analogously to \eqref{eq:Noz_modESN}, but using instead the inclusion $\mathcal{D}_k\subset \mathrm{Ker}\, \partial^2 g(y_k|\,z_k)$ from Proposition~\ref{prop:D_k}, we deduce from \eqref{eq:unit} the relation
\begin{equation}\label{eq:unit3}
\la \nabla^2 f(y_k)w,w\ra\ge \kk\|w\|^2\quad \mbox{for all}\quad w\in \mathcal{D}_k,\,\, k\in \N \text{ large}.
\end{equation}
It follows from \eqref{eq:inexact_subproblem_condition} and \eqref{eq:unit3} that
\begin{equation}
\la z_k+\nabla f(y_k),d_k \ra
\geq (1-\tau)\la H_k d_k,d_k\ra
\geq (1-\tau)\la\nabla^2 f(y_k)d_k,d_k\ra\geq (1-\tau)\kappa\|d_k\|^2 \,\, \text{for}\,\, k\text{ large}. \label{eq:estimate2}
\end{equation}
If $d_k = 0$, the unit step acceptance holds trivially. Suppose that $d_k\ne0$, applying the Cauchy-Schwarz inequality for \eqref{eq:estimate2} and then employing \eqref{eq:vkto0} give
\[
\|d_k\| \leq  \dfrac{1}{\kappa(1-\tau)}\|z_k + \nabla f(y_k)\| = \dfrac{1}{\kappa (1-\tau)}\|v_k\| \to 0 \quad \text{as}\quad k\to \infty.
\]
The Taylor expansion of $f$ at $y_k$ asserts that
\begin{equation}\label{eq:2Taylor}
f(y_k-d_k) = f(y_k) -\la\nabla f(y_k),d_k\ra + \dfrac{1}{2}\la\nabla^2f(y_k-t_kd_k)d_k,d_k\ra \quad \text{for some} \quad t_k\in(0,1).
\end{equation}
Since $\{t_k\}\subset (0,1)$ and $d_k\to 0$, we have $y_k - t_k d_k\to \ox$ as $k\to \infty$. Because $(y_k - t_k d_k,y_k) \to (\ox,\ox)$, it follows from \eqref{eq:unit} and \eqref{eq:2Taylor} that
\begin{align*}
&f(y_k-d_k) - f(y_k) +\la\nabla f(y_k),d_k\ra - \dfrac{1}{2}\la\nabla^2f(y_k)d_k,d_k\ra\\
&= \dfrac{1}{2}\left\la\left(\nabla^2f(y_k-t_kd_k)-\nabla^2f(y_k)\right)d_k,d_k\right\ra\\
&\leq \dfrac{1}{2}\| \nabla^2f(y_k-t_kd_k)-\nabla^2f(y_k)\|\|d_k\|^2 \leq \dfrac{\kappa}{2}\left((1-\sigma)(1-\tau)-\dfrac{1}{2}\right)\|d_k\|^2.
\end{align*}
Hence, 
\begin{equation}
\begin{aligned}
&f(y_k-d_k) -f(y_k) -\la z_k,d_k\ra +\sigma\la z_k+\nabla f(y_k),d_k\ra \\
&\leq -\la\nabla f(y_k),d_k\ra -\la z_k,d_k\ra + \sigma\la z_k+\nabla f(y_k),d_k\ra+\dfrac{1}{2}\la\nabla^2f(y_k
)d_k,d_k\ra +\dfrac{\kappa}{2}\left((1-\sigma)(1-\tau)-\dfrac{1}{2}\right)\|d_k\|^2 \\
&= -(1-\sigma)\la z_k+\nabla f(y_k),d_k\ra+\dfrac{1}{2}\la\nabla^2f(y_k)d_k,d_k\ra +\dfrac{\kappa}{2}\left((1-\sigma)(1-\tau)-\dfrac{1}{2}\right)\|d_k\|^2\label{eq:estimate1}.
\end{aligned}
\end{equation}
Combining \eqref{eq:estimate2}, \eqref{eq:estimate1} and $\tau\in\left(0,\frac{1-2\sigma}{2(1-\sigma)}\right)$ where $0<\sigma <1/2$, we arrive at
\begin{equation}
\begin{aligned}
&f(y_k-d_k) -f(y_k) -\la z_k,d_k\ra +\sigma\la z_k+\nabla f(y_k),d_k\ra \\
&\le -(1-\sigma)\la z_k+\nabla f(y_k),d_k\ra+\dfrac{1}{2}\la\nabla^2f(y_k)d_k,d_k\ra +\dfrac{\kappa}{2}\left((1-\sigma)(1-\tau)-\dfrac{1}{2}\right)\|d_k\|^2 \\
&\leq \left(\dfrac{1}{2} -\left(1-\sigma\right)(1-\tau)\right)\la \nabla^2 f(y_k)d_k,d_k\ra +\dfrac{\kappa}{2}\left((1-\sigma)(1-\tau)-\dfrac{1}{2}\right)\|d_k\|^2\\
&\leq \kappa\left(\dfrac{1}{2} -\left(1-\sigma\right)(1-\tau)\right)\|d_k\|^2 +\dfrac{\kappa}{2}\left((1-\sigma)(1-\tau)-\dfrac{1}{2}\right)\|d_k\|^2  \\
&\leq-\dfrac{\kappa}{2}\left((1-\sigma)(1-\tau)-\dfrac{1}{2}\right)\|d_k\|^2 \leq0\label{eq:estimate3}. 
\end{aligned}
\end{equation}
Hence, we may conclude from \eqref{eq:estimate3} that the backtracking line search accepts $\beta_k = 1$ for all sufficiently
large $k\in \N$. 

Finally, we show that the assertion {\bf (ii)} holds. Fix any non-stationary iteration $k$ and suppose that $f(x) = \dfrac{1}{2}\la Ax,x\ra + \la b,x\ra + c.$ Since $\nabla f(x)=Ax+b$ and $\nabla^2 f(y_k)=A$, one may write
\[
f(y_k-d_k)
=
f(y_k)-\langle \nabla f(y_k),d_k\rangle
+\frac12\langle A d_k,d_k\rangle.
\]
Given that $H_k = A + \delta_k  \mathbb{I}_n$ and $\tau\in\left(0,\frac{1-2\sigma}{2(1-\sigma)}\right)$, we have
\[
\begin{aligned}
&f(y_k-d_k) -f(y_k)-\langle z_k,d_k\rangle
+\sigma\langle z_k+\nabla f(y_k),d_k\rangle\\
&= -\la \nabla f(y_k),d_k\ra + \dfrac{1}{2}\la Ad_k,d_k\ra -\la z_k,d_k\ra + \sigma \la z_k + \nabla f(y_k),d_k\ra \\
& = -(1-\sigma)\langle z_k+\nabla f(y_k),d_k\rangle + \frac12\langle A d_k,d_k\rangle\\
& \leq -(1-\sigma)(1-\tau)\langle H_kd_k,d_k\rangle + \frac12\langle A d_k,d_k\rangle\\
&\leq \left(\dfrac{1}{2} -\left(1-\sigma\right)(1-\tau)\right)\langle A d_k,d_k\rangle -(1-\sigma)(1-\tau)\delta_k\|d_k\|^2 \leq 0.
\end{aligned}
\]
This is exactly the line search acceptance condition with $\beta_k=1$. Hence, no
backtracking is performed and $\beta_k=1$ is accepted. The proof is complete.
\end{proof}

\begin{Proposition}[Backtracking complexity of Armijo line search]
\label{prop:backtracking-complexity}
For every non-stationary iteration $k \in \N\cup \{0\}$, if $d_k \ne 0$, then every stepsize $\beta$ satisfying 
\begin{equation}\label{eq:step_accept}
0<\beta \le \Bar{\beta}_k:= \min\left\{1;\,  \frac{2(1-\sigma)\langle z_k+\nabla f(y_k),d_k\rangle}{{\ell}\|d_k\|^2}\right\} 
\end{equation}
is accepted by the Armijo backtracking line search. Consequently, the number of iterations for line search procedures termination $N_k$ satisfies
\begin{equation}\label{eq:complexity_accept}
N_k
\le \overline{N}_k :=
\max\left\{
0;\,
\left\lceil
\frac{\log  \Bar{\beta}_k}{\log \theta}
\right\rceil
\right\}.
\end{equation}
\end{Proposition}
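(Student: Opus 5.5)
The plan is to use the descent lemma for $f$, which follows from the global $\ell$-Lipschitz continuity of $\nabla f$ in Assumption~\ref{as:assumption}(i). For any $\beta>0$ this gives
\[
f(y_k-\beta d_k)\le f(y_k)-\beta\langle\nabla f(y_k),d_k\rangle+\frac{\ell}{2}\beta^2\|d_k\|^2 .
\]
We then rewrite the Armijo test \eqref{eq:ls_f} in the equivalent form
\[
f(y_k-\beta d_k)\le f(y_k)-\beta\langle\nabla f(y_k),d_k\rangle+(1-\sigma)\beta\langle z_k+\nabla f(y_k),d_k\rangle .
\]
This follows by expanding $\beta\langle z_k,d_k\rangle-\sigma\beta\langle z_k+\nabla f(y_k),d_k\rangle=-\beta\langle\nabla f(y_k),d_k\rangle+(1-\sigma)\beta\langle z_k+\nabla f(y_k),d_k\rangle$. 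Comparing the two displays, it is enough to have $\frac{\ell}{2}\beta^2\|d_k\|^2\le(1-\sigma)\beta\langle z_k+\nabla f(y_k),d_k\rangle$. Since $d_k\neq0$ and $\beta>0$, this is exactly $\beta\le 2(1-\sigma)\langle z_k+\nabla f(y_k),d_k\rangle/(\ell\|d_k\|^2)$.

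By Remark~\ref{re:descent} (inequality \eqref{eq:vkdk>0}), at a non-stationary iteration with $d_k\ne0$ we have $\langle z_k+\nabla f(y_k),d_k\rangle\ge(1-\tau)\delta_k\|d_k\|^2>0$. Hence $\bar\beta_k\in(0,1]$ is well defined and positive. The restriction $\beta\le1$ in $\bar\beta_k$ is needed only because the backtracking starts at $\beta=1$; the equivalence with the Armijo condition on $\varphi$ via \eqref{eq:g-affine} also requires $\beta\in[0,1]$. The test \eqref{eq:ls_f} itself involves only $f$, so the descent-lemma argument applies for every $\beta\in(0,\bar\beta_k]$. This proves the first assertion.

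For the counting bound \eqref{eq:complexity_accept}, the trial stepsizes are $\theta^m$ for $m=0,1,\ldots$, and the procedure stops at the first accepted one. Every $m$ with $\theta^m\le\bar\beta_k$ is accepted, so $N_k$ is at most the smallest integer $m\ge0$ with $\theta^m\le\bar\beta_k$. Taking logarithms, with $\log\theta<0$, the condition becomes $m\ge\log\bar\beta_k/\log\theta$. The smallest such nonnegative integer is $\max\{0,\lceil\log\bar\beta_k/\log\theta\rceil\}$, which gives the stated bound.

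The main point needing care is the sign and positivity of $\langle z_k+\nabla f(y_k),d_k\rangle$, which Remark~\ref{re:descent} supplies. The rest is the routine descent-lemma computation above.
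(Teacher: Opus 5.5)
Your proposal is correct and follows essentially the same route as the paper: you combine the descent lemma from the $\ell$-Lipschitz gradient with the rewriting of the $f$-based Armijo test as $f(y_k)-\beta\langle\nabla f(y_k),d_k\rangle+(1-\sigma)\beta\langle z_k+\nabla f(y_k),d_k\rangle$, compare the two to obtain the threshold $\bar\beta_k$, and then count geometric trial steps. Your explicit appeal to \eqref{eq:vkdk>0} to ensure $\bar\beta_k>0$ (so the logarithm in \eqref{eq:complexity_accept} is defined) and your step-by-step counting argument fill in details that the paper's proof leaves implicit or states in one line.
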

\begin{proof}
Since $\nabla f$ is $\ell$-Lipschitz continuous, for any $\beta >0$, we have
\[
f(y_k-\beta d_k) - f(y_k)+\beta\langle\nabla f(y_k),d_k\rangle
\le \frac{{\ell}}{2}\beta^2\|d_k\|^2.
\]
Hence, if $\beta$ satisfies \eqref{eq:step_accept}, then it follows that 
\begin{align*}
f(y_k-\beta d_k) - f(y_k)+\beta\langle\nabla f(y_k),d_k\rangle
\le \dfrac{\ell}{2}\beta^2 \|d_k\|^2 \le \beta (1-\sigma) \langle z_k+\nabla f(y_k),d_k\rangle.
\end{align*}
The above gives
\begin{align*}
f(y_k-\beta d_k) 
&\le f(y_k)-\beta\langle\nabla f(y_k),d_k\rangle + \beta (1-\sigma)\langle z_k+\nabla f(y_k),d_k\rangle\\
&= f(y_k)+\beta\langle z_k,d_k\rangle
-\sigma\beta\langle z_k+\nabla f(y_k),d_k\rangle,
\end{align*}
thereby proving the first statement. The second statement follows from the first one. Indeed, let $N_k$ be the number of backtracking steps, so $\beta_k = \theta^{N_k}$. After $N_k\le \overline{N}_k$ backtracking iterations, where $\overline{N}_k$ is  defined in \eqref{eq:complexity_accept}, the stepsize satisfies $\beta_{k} \leq \Bar{\beta}_k.$ 
\end{proof}

The next result establishes the convergence and linear convergence rate of the sequences generated by {\bf Algorithm~\ref{alg:Mod_regularized_ESN}}, in the presence of a tilt-stable minimizer.

\begin{Theorem}
\label{thm:linear_rate}
Let $\{x_k\}$ be the sequence generated by {\bf Algorithm~\ref{alg:Mod_regularized_ESN}}. Suppose that each $x_k$ is non-stationary, and let $\bar{x}$ be the tilt-stable minimizer of $\varphi$.
We have the following assertions:
\begin{enumerate}[\bf (i)]
    \item The sequence $\{\varphi(x_k)\}$ converges to $\varphi(\ox)$ with convergence rate at least Q-linear.

    \item The sequence $\{x_k\}$ converges to $\bar x$ with convergence rate at least R-linear.
\end{enumerate}   
\end{Theorem}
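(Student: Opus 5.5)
To prove Theorem~\ref{thm:linear_rate}, I would combine the sufficient decrease estimate \eqref{eq:suff_descent} with the quadratic growth and an error-bound (KL-type) inequality for $\varphi$ near the tilt-stable minimizer $\ox$.

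\textbf{Setup near $\ox$.} Since $\ox$ is the tilt-stable, hence unique, minimizer, Proposition~\ref{prop:conv_modESN}(ii) gives $x_k\to\ox$. Consequently $y_k\to\ox$ and $v_k:=z_k+\nabla f(y_k)\to 0$, as in \eqref{eq:vkto0}. By \cite[Theorems~3.2 and~3.5]{BorisNghia15}, tilt stability yields the quadratic growth
\[
\varphi(x)-\varphi(\ox)\ge \frac{\kappa}{2}\|x-\ox\|^2
\]
for $x$ near $\ox$. It also yields the reverse estimate: for $(x,v)\in\gph\partial\varphi$ near $(\ox,0)$, convexity gives $\varphi(x)-\varphi(\ox)\le \la v,x-\ox\ra\le \|v\|\,\|x-\ox\|$. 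Combined with quadratic growth, this produces the error bound $\varphi(x)-\varphi(\ox)\le \frac{2}{\kappa}\|v\|^2$.

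\textbf{Bounding the gap at $y_k$.} Apply this with $(x,v)=(y_k,v_k)$. The residual satisfies
\[
\|v_k\|\le \Big(\frac1\alpha+\ell\Big)\|x_k-y_k\|,
\]
so $\varphi(y_k)-\varphi(\ox)\le C_1\|x_k-y_k\|^2$ for all large $k$, with $C_1:=\frac{2}{\kappa}\big(\frac1\alpha+\ell\big)^2$.

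\textbf{Q-linear rate of $\{\varphi(x_k)\}$.} By \eqref{eq:suff_descent} and Remark~\ref{re:descent}, $\varphi(x_{k+1})\le\varphi(y_k)$ and $\varphi(y_k)\le \varphi(x_k)-C\|y_k-x_k\|^2$. Writing $e_k:=\varphi(x_k)-\varphi(\ox)$, the first inequality and the previous step give $e_{k+1}\le C_1\|x_k-y_k\|^2$. The sufficient decrease then gives $e_{k+1}\le e_k-C\|y_k-x_k\|^2\le e_k-\frac{C}{C_1}e_{k+1}$. Hence
\[
e_{k+1}\le \frac{C_1}{C_1+C}\,e_k,
\]
which is Q-linear with ratio less than $1$ for all large $k$.

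\textbf{R-linear rate of $\{x_k\}$.} Quadratic growth gives $\|x_k-\ox\|\le \sqrt{2e_k/\kappa}\le c\,\mu^{k/2}$, which is R-linear.

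The main obstacle is justifying the error bound at the pair $(y_k,v_k)$ rather than at $x_k$. I would handle it exactly via the convexity argument above, which only needs $v_k\in\partial\varphi(y_k)$ and $y_k$ to lie in the quadratic-growth neighborhood; both hold for large $k$.
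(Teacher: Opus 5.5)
Your argument is correct, and it takes a more economical route than the paper's.

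\textbf{The paper's route.} The paper evaluates the error bound at the new iterate $x_{k+1}$. It uses $x_{k+1}\in\partial g^*(z_k)$ to get $z_k+\nabla f(x_{k+1})\in\partial\varphi(x_{k+1})$, with norm at most $C_2(\|y_k-x_k\|+\|x_{k+1}-y_k\|)$. It must then dominate $\|x_{k+1}-y_k\|^2=\beta_k^2\|d_k\|^2$ by the Newton part of the decrease. That step requires the uniform curvature bound \eqref{eq:estimate2} along $\mathcal{D}_k$, obtained from tilt stability and Proposition~\ref{prop:D_k}(ii). This is the usual KL-type template: sufficient decrease in the full displacement plus a relative-error bound at the new iterate.

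\textbf{Your route.} You evaluate the error bound at $y_k$ with the subgradient $v_k\in\partial\varphi(y_k)$, whose norm is controlled by $\|x_k-y_k\|$ alone. You then only need that the Newton correction does not increase $\varphi$, i.e.\ $\varphi(x_{k+1})\le\varphi(y_k)$. This shows that the linear rate is inherited from the proximal-gradient step under quadratic growth, and holds for any $\varphi$-monotone correction.

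\textbf{What this buys.} You need neither \eqref{eq:estimate2}, nor the inclusion $z_k\in\partial g(x_{k+1})$, nor the continuity of $\varphi$ on $\dom g$; the convergence $e_k\to0$ follows from the contraction itself. In the paper's notation $M^2=2/\kappa$, $C_2=1/\alpha+\ell$, your contraction factor is $M^2C_2^2/(M^2C_2^2+C)$. This is even smaller than the paper's $M^2C_2^2/\bigl(M^2C_2^2+\tfrac12\min\{C,\sigma\kappa(1-\tau)\}\bigr)$. The paper's route therefore uses more structure than this statement requires.

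\textbf{One implicit detail.} The ratio-based definitions of Q- and R-linear convergence need $e_k>0$ and $x_k\neq\ox$ for all $k$. This follows directly from non-stationarity: $x_k=\ox$ would force $y_k=\prox_{\alpha g}(\ox-\alpha\nabla f(\ox))=\ox=x_k$. Moreover, $e_k=0$ only at the unique minimizer $\ox$.
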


\begin{proof}
By Proposition~\ref{prop:conv_modESN}(i), we have that $\{\varphi(x_k)\}$ is a decreasing sequence and is bounded from below; hence, $\{\varphi(x_k)\}$ converges to some finite value $\varphi^*$. But since $x_k\to \ox$ by Proposition~\ref{prop:conv_modESN}(ii) and $\varphi$ is continuous relative to $\dom g$, we deduce that
\[
\varphi(\ox) = \lim_{k\to \infty}\varphi (x_k) = \varphi^*.
\]
In case $\varphi(x_k) = \varphi (\ox)$ for some $k \in \N\in \{0\}$, then $\varphi(x_{k+p}) = \varphi (\ox)$ for every $p \geq 0$ since the sequence $\{\varphi(x_k)\}$ is decreasing to $\varphi (\ox)$. Moreover, the relations in \eqref{eq:vkdk>0} and \eqref{eq:suff_descent} together imply that for any $C \in (0, 1/\alpha - \ell/2)$, we have
\[
0\leq C\|y_{k+p}-x_{k+p}\|^2 \le  C\|y_{k+p}-x_{k+p}\|^2 +\sigma\beta_{k+p}\langle z_{k+p}+\nabla f(y_{k+p}),d_{k+p}\rangle \leq \varphi(x_{k+p})-\varphi(x_{k+p+1}) =  0.
\]
This yields $y_{k+p} = x_{k+p} = \ox$ for every $p \geq 0$, which contradicts our assumption that each $x_k$ is non-stationary. From now on, let us suppose that $\varphi (x_k) > \varphi (\ox)$ for all $k$.

Since $\ox$ is the tilt-stable minimizer of $\varphi$ and $\{x_{k}\}$ converges to $\ox$, by \eqref{eq:suff_descent} and \eqref{eq:estimate2} and \cite[Theorems~3.2~and~3.5]{BorisNghia15} there exists $k_0 \in \N$ such that whenever $k\geq k_0$, we have the estimates
\begin{equation}
\begin{aligned}
\big(\varphi(x_k) - \varphi(\ox)\big) - \big(\varphi(x_{k+1}) - \varphi(\ox)\big)
&=\varphi(x_k) - \varphi(x_{k+1})\\
&\geq C\|y_k-x_k\|^2 + \sigma\beta_k \la z_k + \nabla f(y_k), d_k\ra\\
& \geq C\|y_k-x_k\|^2 + \sigma\beta_k(1-\tau)\la H_kd_k,d_k\ra\\
&\geq C\|y_k-x_k\|^2 + \sigma\kappa(1-\tau)\beta_k^2\|d_k\|^2 \\
&\geq C\|y_k-x_k\|^2 + \sigma\kappa(1-\tau)\|x_{k+1} - y_k\|^2\\
&\geq C_1(\|y_k-x_k\| + \|x_{k+1} - y_k\|)^2 \label{eq:phik_k1},
\end{aligned}
\end{equation}
where $C_1:= \dfrac{1}{2}\min\{C;\,\sigma\kappa(1-\tau)\}$, and 
\begin{equation}\label{eq:quad_estimate}
\varphi(x_k)\ge \varphi(\bar x)+\frac{\kappa}{2}\|x_k-\bar x\|^2.
\end{equation}
Using \eqref{eq:quad_estimate}, we have, for any $v \in \partial \varphi(x_k)$, 
\[
\varphi(x_k) - \varphi(\ox) \leq \la v, x_k- \ox\ra\leq \|v\|\|x_k-\ox\|\leq M\|v\|\big(\varphi(x_k)-\varphi(\ox)\big)^{1/2},
\]
where $M:= \sqrt{2/\kappa}$. Taking the infimum over $v \in \partial \varphi(x_k)$, the former yields
\begin{equation}\label{eq:KL1}
\big(\varphi(x_k)-\varphi(\ox)\big)^{1/2} \leq M\,\d(0,\partial \varphi(x_k)),\qquad \forall\, k\ge k_0.
\end{equation}

Furthermore, observe from {\bf Algorithm~\ref{alg:Mod_regularized_ESN}} that $y_k \in \partial g^*(z_k)$ and $y_k - d_k \in \partial g^*(z_k)$, and thus we have 
\[
x_{k+1} = y_k - \beta_kd_k \in \partial g^*(z_k),\quad \text{by convexity}.
\]
This yields $z_k\in \partial g(x_{k+1})$, and hence $z_k + \nabla f(x_{k+1}) \in \partial \varphi(x_{k+1})$. Consequently,
\begin{equation}\label{eq:dist_est}
\begin{aligned}
\d(0,\partial \varphi(x_{k+1})) 
\leq \|z_k + \nabla f(x_{k+1})\| &\le \dfrac{1}{\alpha}\|y_k - x_k\| + \|\nabla f(x_{k+1}) - \nabla f(x_k)\|\\
&\leq \dfrac{1}{\alpha}\|y_k -x_k\| + \ell \|x_{k+1} - x_k\|
\leq C_2(\|y_k -x_k\| +\|x_{k+1} - y_k\|),
\end{aligned}
\end{equation}
where $C_2:= 1/\alpha + \ell$.

For each $k\ge k_0$, let $r_k:=\varphi(x_k)-\varphi(\ox)>0$. Combining
\eqref{eq:KL1}--\eqref{eq:dist_est} gives
\[
r_{k+1}^{1/2}
\leq M\operatorname{dist}(0,\partial\varphi(x_{k+1}))
\leq
MC_2\bigl(
    \|y_k-x_k\|+\|x_{k+1}-y_k\|
\bigr).
\]
Together with \eqref{eq:phik_k1}, this yields $r_{k+1}\leq
\frac{M^2C_2^2}{C_1}(r_k-r_{k+1}).$ Hence for $k\ge k_0$,
\[
r_{k+1}
\leq
\rho
r_k \qquad \text{for}\,\,\, \rho:=\dfrac{\frac{M^2C_2^2}{C_1}}{1+\frac{M^2C_2^2}{C_1}}=\frac{M^2C_2^2}{C_1+ M^2C_2^2}\in (0,1),
\]
which gives the Q-linear convergence rate of $\{r_k\}$ to zero. Thus,
$\{\varphi(x_k)\}$ converges Q-linearly to
$\varphi(\bar{x})$. Finally, since $r_{k+1}\leq \rho r_k$ for all $k\geq k_0$, it follows from \eqref{eq:quad_estimate} that
\[
\|x_k-\bar{x}\|
\leq
M\big(\varphi(x_k)-\varphi(\ox)\big)^{1/2}
=
M\,r_k^{1/2} \le M\,\rho^{(k-k_0)/2} r_{k_0}^{1/2}
= \big(Mr_{k_0}^{1/2}\big)\rho^{(k-k_0)/2},
\]
in which $\rho \in (0,1)$. Therefore, the sequence $\{x_k\}$ converges R-linearly to $\bar{x}$, which completes
the proof.

\end{proof}

In the next result, under the essenial polyhedrality of the convex function $g$, we establish the local quadratic convergence of {\bf Algorithm~\ref{alg:Mod_regularized_ESN}}.

\begin{Theorem}[Quadratic convergence of globalized Newton method with effective set] \label{theo:quad-poly2}
Suppose that $\bar{x}$ is the tilt-stable minimizer of the function $\varphi$ in \eqref{p:CP}, i.e., there exists $\kappa>0$ for which \eqref{eq:tilt} holds. Assume that $g$ is polyhedral and that the Hessian $\nabla^2 f(x)$ is Lipschitz continuous around $\ox$. Suppose further that {\bf Algorithm~\ref{alg:Mod_regularized_ESN}} generates an infinite sequence $\{x_k\}$ of non-stationary iterates. Then the iterate sequence $\{x_k\}$ and the objective value sequence $\{\varphi(x_k)\}$ converge quadratically to $\ox$ and $\varphi(\ox)$, respectively.

\end{Theorem}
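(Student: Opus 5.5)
Combining the global results with the local analysis from the proof of Theorem~\ref{theo:quad-poly1}, I would argue as follows. By Proposition~\ref{prop:conv_modESN}(ii), $x_k\to\ox$, since a tilt-stable minimizer is the unique minimizer. Hence $y_k\to\ox$ and $z_k\to\oz:=-\nabla f(\ox)$, with the same Lipschitz estimates \eqref{eq:y0xbar-rem-1}--\eqref{eq:y0_modESN}:
\[
\|y_k-\ox\|+\|z_k-\oz\|\le\eta\|x_k-\ox\|.
\]
By Theorem~\ref{thm:full_step_acceptance_modESN}(i), $\beta_k=1$ for all $k\ge k_0$, so $x_{k+1}=y_k-d_k$. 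For large $k$ the polyhedral inclusions \eqref{eq:inclu-polyhedral} give $\ox,y_k\in\partial g^*(z_k)\subset\partial g^*(\oz)$, hence $y_k-\ox\in\mathcal D_k$. The curvature bound \eqref{eq:unit3} also holds on $\Span\mathcal D_k=L_k$, by \eqref{eq:Noz_modESN}.

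The core step repeats the argument around \eqref{eq:connect1} with the inexact KKT relation. Write $r_k:=H_kd_k-v_k+\xi_k$ with $v_k=z_k+\nabla f(y_k)$ and $\xi_k\in N_{\mathcal D_k}(d_k)$. Testing $\xi_k$ against the feasible point $y_k-\ox$ gives
\[
\la H_kd_k-v_k-r_k,(y_k-\ox)-d_k\ra\ge0 .
\]
Adding $\la H_k(y_k-\ox-d_k),y_k-\ox-d_k\ra$ and using \eqref{eq:projzkzbar-2_modESN} (unchanged, since $y_k-d_k\in\partial g^*(z_k)$) yields
\[
\kappa\|x_{k+1}-\ox\|^2\le\Big(\tfrac{\ell}{2}\|y_k-\ox\|^2+\delta_k\|y_k-\ox\|+\|r_k\|\Big)\|x_{k+1}-\ox\|.
\]
The lower bound uses $H_k\succeq\nabla^2 f(y_k)$ on $L_k$. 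The extra term $\delta_k\la y_k-\ox,\cdot\ra$ appears because $H_k$ includes $\delta_k\mathbb I$.

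Each term is then $O(\|x_k-\ox\|^2)$. First, $\delta_k\le\|y_k-x_k\|/\alpha\le(2+\ell\alpha)\|x_k-\ox\|/\alpha$, so $\delta_k\|y_k-\ox\|=O(\|x_k-\ox\|^2)$; this is precisely why the regularization was chosen proportional to the prox residual. Second, $\|r_k\|\le c\|v_k\|^2$ and $\|v_k\|\le(1+\ell)\eta\|x_k-\ox\|$ by \eqref{eq:v0_modESN}, so $\|r_k\|=O(\|x_k-\ox\|^2)$. Third, $\|y_k-\ox\|^2\le\eta^2\|x_k-\ox\|^2$. Dividing by $\|x_{k+1}-\ox\|$ gives $\|x_{k+1}-\ox\|\le\overline C\|x_k-\ox\|^2$ for all large $k$.

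For the objective values I would reuse \eqref{eq:varphi_quad1}--\eqref{eq:quad_growth}. Since $x_{k+1}\in\partial g^*(z_k)\subset\partial g^*(\oz)$, we have $g(x_{k+1})-g(\ox)=\la\oz,x_{k+1}-\ox\ra$, hence
\[
\varphi(x_{k+1})-\varphi(\ox)\le\tfrac{\ell}{2}\|x_{k+1}-\ox\|^2 .
\]
Combined with the quadratic growth $\varphi(x_k)-\varphi(\ox)\ge\frac{\kappa}{2}\|x_k-\ox\|^2$, this gives quadratic convergence of $\{\varphi(x_k)\}$.

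I expect the main obstacle to be the core step. Because $d_k$ is inexact, the inequality must be derived from the normal-cone element $\xi_k$ rather than from exact optimality, while still invoking \eqref{eq:projzkzbar-2_modESN}, which needs $y_k-d_k$ in the face. The residual and regularization errors must also be confirmed to be genuinely second order, which requires using the $c\|v_k\|^2$ part of the forcing term rather than $\eta\|v_k\|$.
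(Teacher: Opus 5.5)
Your proposal is correct and follows essentially the same route as the paper's proof. In both, the inexact variational inequality from $\xi_k\in N_{\mathcal D_k}(d_k)$ is tested at the feasible point $y_k-\bar x$ (available by the polyhedral inclusions), the $z_k-\bar z$ term is removed by monotonicity of $\partial g^*$, and curvature on $\operatorname{span}\mathcal D_k$ gives the lower bound; the three error terms (Hessian remainder, $\delta_k\|y_k-\bar x\|$ with $\delta_k\le\|y_k-x_k\|/\alpha$, and the residual bounded by $c\|z_k+\nabla f(y_k)\|^2$) are then shown to be $O(\|x_k-\bar x\|^2)$, and eventual unit steps together with the objective-value argument of Theorem~\ref{theo:quad-poly1} finish the proof.
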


\begin{proof}
Since $\xi_k\in N_{\mathcal{D}_k}(d_k)$, we have $\la \xi_k ,d-d_k\ra \leq 0$ for any $d \in \mathcal{D}_k$. It then follows from \eqref{eq:inexact_subproblem_condition} and the Cauchy-Schwarz inequality that
\begin{equation}
\begin{aligned}
\forall\, d\in \mathcal{D}_k,\quad\la H_kd_k -\left(z_k+\nabla f(y_k)\right),d - d_k\ra 
&=  \la H_kd_k -\left(z_k+\nabla f(y_k)\right) +\xi_k ,d-d_k\ra - \la \xi_k ,d-d_k\ra \\
&\geq \la H_kd_k -\left(z_k+\nabla f(y_k)\right) +\xi_k,d-d_k\ra\\
&\geq
-\| H_kd_k -\left(z_k+\nabla f(y_k)\right) +\xi_k\|\|d-d_k\|\\
&\geq -c\|z_k + \nabla f(y_k)\|^2\|d-d_k\|\label{eq:quad1}.
\end{aligned}
\end{equation}

It follows from the proof of Theorem~\ref{thm:full_step_acceptance_modESN} that $(y_k,z_k+\nabla f(y_k))\xrightarrow{{\rm gph}\, \partial \varphi}(\ox,0)$. Similarly to \eqref{eq:Noz_modESN}, we find a uniform constant $\kappa>0$ such that
\begin{equation}\label{eq:uniform-Lk}
    \la \nabla^2 f(y_k)w,w\ra \ge \kappa \|w\|^2 \quad \text{for all}\quad w\in \mathrm{span}\, \mathcal{D}_k.
\end{equation}
Since $y_k\xrightarrow{{\rm dom}\,g} \ox$ and $g$ is a polyhedral convex function, we may employ \cite[Proposition~3.3(a)]{MordukhovichSarabi16} to get the inclusions $z_k \in \partial g(y_k)\subset \partial g(\ox)$ for sufficiently large $k$, which further yield $\ox\in \partial g^* (z_k)$ for all such $k$. This implies, in particular, 
\begin{equation*}
    y_k -\ox \in \mathcal{D}_k\qquad \text{and}\qquad d_k - (y_k-\ox) \in \mathcal{D}_k - \mathcal{D}_k \subset \mathrm{span}\, \mathcal{D}_k.
\end{equation*}
Hence, it follows from \eqref{eq:quad1}--\eqref{eq:uniform-Lk} that 
\begin{equation}
\begin{aligned}
&\la H_k(y_k -\ox) - (z_k + \nabla f(y_k)),y_k - \ox-d_k\ra \\
&= \la H_k(d_k - (y_k -\ox)), d_k - (y_k -\ox)\ra + \la H_kd_k -\left(z_k+\nabla f(y_k)\right),y_k-\ox - d_k\ra\\
&\geq \la H_k(d_k - (y_k -\ox)), d_k - (y_k -\ox)\ra -c\|z_k + \nabla f(y_k)\|^2\,\|d_k - (y_k -\ox)\| \\
&\geq \la \nabla^2 f(y_k)(d_k - (y_k -\ox)), d_k - (y_k -\ox)\ra -c\|z_k + \nabla f(y_k)\|^2\,\|d_k - (y_k -\ox)\| \\
&\geq \kappa \|d_k - (y_k -\ox)\|^2 -c\|z_k + \nabla f(y_k)\|^2\,\|d_k - (y_k -\ox)\|.
\label{eq:quad2}
\end{aligned}
\end{equation}
Note that, similarly to \eqref{eq:connect_equiv} and \eqref{eq:projzkzbar-2_modESN}, we can derive
\begin{align}\label{eq:use-monotone}
\la H_k(y_k -\ox) - (z_k + \nabla f(y_k)),y_k - \ox-d_k\ra 
= \la H_k(y_k - \ox) + \nabla f(\ox) -\nabla f(y_k),y_k - \ox-d_k\ra.
\end{align}
Hence, using Cauchy-Schwarz inequality together with \eqref{eq:quad2}--\eqref{eq:use-monotone} gives
\begin{equation}\label{eq:quad2v1}
\|H_k(y_k - \ox) + \nabla f(\ox) -\nabla f(y_k)\|\, \|y_k - \ox-d_k\|\geq \kappa \|d_k - (y_k -\ox)\|^2 -c\|z_k + \nabla f(y_k)\|^2\,\|d_k - (y_k -\ox)\|.
\end{equation}
Since $H_k = \nabla^2f(y_k) + \delta_k\mathbb{I}_n$ and $\delta_k = \min\left\{\bar \delta;\,\frac{\|y_k-x_k\|}{\alpha}\right\}$, we derive from \eqref{eq:1-di-2_modESN} and \eqref{eq:y0xbar-rem-1} that
\begin{equation}
\begin{aligned}
\left\|H_k(y_k-\ox) + \nabla f(\ox)-\nabla f(y_k)\right\| 
&\leq \left\|\nabla^2f(y_k)(y_k-\ox) + \nabla f(\ox)-\nabla f(y_k)\right\| +\delta_k\|y_k-\ox\| \\
&\leq \dfrac{\ell}{2}\|y_k-\ox\|^2 + \dfrac{1}{\alpha}\big(\|y_k-\ox\|+\|x_k-\ox\|\big)\|y_k-\ox\|  \\
&\leq \left(\dfrac{\ell}{2}+\dfrac{1}{\alpha}\right)(1+\ell \alpha)^2\|x_k-\ox\|^2 + \dfrac{1+\ell \alpha}{\alpha}\|x_k-\ox\|^2\\
&\le \dfrac{1+\ell \alpha}{\alpha}\left(\dfrac{(1+\ell \alpha)(2+\ell\alpha)}{2}+1\right)\|x_k-\ox\|^2.
\label{eq:quad4}
\end{aligned}
\end{equation}
Dividing both sides of \eqref{eq:quad2v1} by $\kappa\|d_k - (y_k -\ox)\|$ and combining with \eqref{eq:quad4}, we arrive at
\begin{equation}\label{eq:quad5}
\begin{aligned}
\|d_k - (y_k -\ox)\| &\le \dfrac{1}{\kappa} \|H_k(y_k - \ox) + \nabla f(\ox) -\nabla f(y_k)\| + \dfrac{c}{\kappa}\|z_k + \nabla f(y_k)\|^2 \\
&\leq \dfrac{1+\ell \alpha}{\kappa\alpha}\left(\dfrac{(1+\ell \alpha)(2+\ell\alpha)}{2}+1\right)\|x_k-\ox\|^2+ \dfrac{c}{\kappa}\|z_k + \nabla f(y_k)\|^2.
\end{aligned}
\end{equation}
Note by \eqref{eq:y0xbar-rem-1}--\eqref{eq:z0zbar} that
\begin{equation}\label{eq:quad6}
\begin{aligned}
\|z_k + \nabla f(y_k)\| &\leq \|z_k-\bar z\|+ \|\bar z + \nabla f(y_k)\|\\
&\leq (1+\ell)(\|z_k-\bar z\|+\|y_k-\ox\|) \leq  (1+\ell)\eta\|x_k-\ox\|.
\end{aligned}
\end{equation}
where $\eta:=\frac{(1+\ell\al)(2+\al)}{\al}$. Since the step size $\beta_k = 1$ is eventually accepted by Theorem~\ref{thm:full_step_acceptance_modESN}(i), we derive from \eqref{eq:quad5}--\eqref{eq:quad6} that
\begin{align*}
\|x_{k+1} - \ox\|=\|d_k - (y_k -\ox)\| 
&\leq C\|x_k-\ox\|^2,
\end{align*}
where the constant $C$ is 
\[
C :=  \dfrac{1+\ell \alpha}{\kappa\alpha}\left(\dfrac{(1+\ell \alpha)(2+\ell\alpha)}{2}+1\right) + \dfrac{c}{\kappa}(1+\ell)^2 \eta^2.
\]
Thus, the local quadratic convergence of the iterate sequence $\{x_k\}$ generated {\bf Algorithm~\ref{alg:Mod_regularized_ESN}} is proved. Following the same arguments as Theorem~\ref{theo:quad-poly1}, the local quadratic convergence of the objective value sequence $\{\varphi(x_k)\}$ can also be shown. The proof is complete.
\end{proof}

\section{Solving subproblem for some models with polyhedral regularizers}
\label{sec:efficient_implementation}

In this section, we describe an implementable subproblem solver for {\bf Algorithm~\ref{alg:Mod_regularized_ESN}} for the problem
\begin{equation}\label{eq:leastsquare}
\min_{x\in\mathbb R^n}\quad \frac12\|Ax-b\|^2+ g(x),
\end{equation}
where $A\in\mathbb R^{m\times n}$ $(m\ll n)$, $b\in\mathbb R^m$, and $g$ is a sparsity-inducing polyhedral regularizer. Such models arise widely in machine learning, signal processing, and statistics, where different regularizers encode different forms of structured sparsity. We focus on three representative classes, namely the Lasso, the fused Lasso and the nonnegative least squares problem.

Fix a nonstationary outer iteration $k$.  For the least-squares loss,
$\nabla^2 f(y_k)=A^\top A$ and the regularized subproblem
\eqref{eq:regularized_subproblem} becomes
\begin{equation}
    \min_{d\in\mathcal{D}_k}
    \quad
        \frac12\big\langle (A^\top A + \delta_k \mathbb{I}_n) d,d\big\rangle
        -\langle z_k+A^\top(Ay_k-b),d\rangle,
        \label{eq:subprob_ls}
\end{equation}
where $\mathcal{D}_k=y_k - \partial g^* (z_k)$. With the change of variables $d=y_k-u$, where $u\in \partial g^* (z_k)$, problem
\eqref{eq:subprob_ls} is, up to a constant independent of $u$, equivalent to
\begin{equation}
\min_{u\in\partial g^*(z_k)}
\quad \frac12\|Au\|^2+\frac{\delta_k}{2}\|u\|^2-\langle A^\top b+\delta_k y_k-z_k,u\rangle.
\label{eq:u_subproblem}
\end{equation}
Solving the primal problem \eqref{eq:u_subproblem} directly is possible using standard variants of projected gradient methods. However, since the primal problem is constrained and has dimension $n$, with $n\gg m$, it is computationally preferable to work with its $m$-dimensional dual formulation. We therefore first derive the corresponding dual problem. Define $c_k:=A^\top b+\delta_k y_k-z_k.$
Then, by Fenchel--Rockafellar duality, the dual of \eqref{eq:u_subproblem} is

\begin{equation}\label{eq:inner_dual}
\min_{v\in\mathbb R^m}\quad \Psi_k (v):=\frac12\|v\|^2
-\frac{\delta_k}{2}\left\|P_{\partial g^*(z_k)}
\left(\frac{c_k-A^\top v}{\delta_k}\right)\right\|^2
+\left\la c_k - A^\top v, P_{\partial g^*(z_k)}
\left(\frac{c_k-A^\top v}{\delta_k}\right) \right\ra.
\end{equation}
Since the primal objective function is strongly convex on the feasible set $\partial g^* (z_k)$, the dual objective $\Psi_k$ is continuously differentiable; see, e.g., \cite[Theorem~26.3]{Rockafellar70}. Moreover, observe that since $\Psi_k$ is also strongly convex,
\eqref{eq:inner_dual} admits a unique minimizer $v^*$, characterized by the equation
\begin{equation}
\nabla\Psi_k(v^*)=v^*-AP_{\partial g^*(z_k)}
\left(\frac{c_k-A^\top v^*}{\delta_k}\right) =0.
\label{eq:inner_system}
\end{equation}

Since the objective function in \eqref{eq:inner_dual} is continuously
differentiable,
{\em conjugate gradient method} is a plausible choice for solving \eqref{eq:inner_dual}. Here, we employ the conjugate gradient method in \cite{HagerZhang2013}.














\begin{algorithm}[H]
\caption{Nonlinear Conjugate Gradient for \eqref{eq:inner_dual}}
\label{alg:hz_inner}
\textbf{Input:} $v^0\in\mathbb{R}^m$, $0<c_1<c_2<1$, 
$u^0:=P_{\partial g^*(z_k)}\big((c_k-A^\top v^0)/\delta_k\big)$,
$q^0:=v^0-Au^0$, $p^0:=-q^0$, and $j:=0$.\\
\textbf{Repeat}
\begin{algorithmic}[1]

    \STATE Set
    $d^j:=y_k-u^j$ and
    $\xi^j:=A^\top v^j+\delta_k u^j-c_k$.

    \STATE If \eqref{eq:dual_inexact_cond} holds with
    $(v,d)=(v^j,d^j)$, stop and return
    $(d_k,\xi_k):=(d^j,\xi^j)$.

    \STATE Choose $\alpha_j>0$ satisfying
    \[
        \Psi_k(v^j+\alpha_jp^j)
        \le
        \Psi_k(v^j)+c_1\alpha_j\langle q^j,p^j\rangle,
        \qquad
        \big\langle\nabla\Psi_k(v^j+\alpha_jp^j),p^j\big\rangle
        \ge
        c_2\langle q^j,p^j\rangle.
    \]

    \STATE Set
    $v^{j+1}:=v^j+\alpha_jp^j$ and
    $u^{j+1}:=P_{\partial g^*(z_k)}
    \big((c_k-A^\top v^{j+1})/\delta_k\big)$.

    \STATE Set
    $q^{j+1}:=v^{j+1}-Au^{j+1}$ and
    $w^j:=q^{j+1}-q^j$.

    \STATE Set
    \[
        \beta_j
        :=
        \max\left\{
        \frac{\langle w^j,q^{j+1}\rangle}
             {\langle p^j,w^j\rangle}
        -
        \frac{\|w^j\|^2\langle p^j,q^{j+1}\rangle}
             {\langle p^j,w^j\rangle^2};
        \,
        0.4\frac{\langle p^j,q^j\rangle}{\|p^j\|^2}
        \right\}.
    \]

    \STATE Set
    $p^{j+1}:=-q^{j+1}+\beta_jp^j$ and
    $j\leftarrow j+1$.

\end{algorithmic}
\end{algorithm}

The following proposition connects the dual residual with the inexactness
condition \eqref{eq:inexact_subproblem_condition} required by the outer method in {\bf Algorithm~\ref{alg:Mod_regularized_ESN}}.
\begin{Proposition}[Dual inexact certificate]
\label{prop:dual_certificate}
Fix $k \in \N\cup \{0\}$, for any $v_j\in\mathbb R^m$ with some $j \in \N$, set
\begin{equation}
u_k^j:=P_{\partial g^*(z_k)}
\left(\frac{c_k-A^\top v_j}{\delta_k}\right),\,\,\, d_k^j:=y_k-u_k^j \quad \text{and}\quad\xi_k^j:=A^\top v_j+\delta_ku_k^j-c_k.
\label{eq:recovery_definitions}
\end{equation}
Then $(d_k^j,\xi_k^j)\in\operatorname{gph}N_{{\mathcal{D}}_k}$ and
\begin{equation}
    H_kd_k^j-(z_k+\nabla f(y_k))+\xi_k^j
    =A^\top\nabla\Psi_k(v_j).
    \label{eq:certificate_identity}
\end{equation}
Consequently, the pair $(d_k^j,\xi_k^j)$ satisfies \eqref{eq:inexact_subproblem_condition} whenever
\begin{equation}\label{eq:dual_inexact_cond}
\begin{cases}
\|A^\top\nabla\Psi_k(v_j)\|
\le
\min\left\{
   \eta\|z_k+\nabla f(y_k)\|,
   c\|z_k+\nabla f(y_k)\|^2
\right\}\\
\la A^\top \nabla\Psi_k(v_j),d_k^j\ra\leq \la \tau H_kd_k^j+\xi_k^j,d_k^j\ra.
\end{cases}
\end{equation}
\end{Proposition}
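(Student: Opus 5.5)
The argument splits into three parts: membership in the normal-cone graph, an algebraic identity, and a translation of the two conditions in \eqref{eq:dual_inexact_cond} into those of \eqref{eq:inexact_subproblem_condition}. Throughout, write $C:=\partial g^*(z_k)$ and $w_j:=(c_k-A^\top v_j)/\delta_k$, so that $u_k^j=P_C(w_j)$.

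\textbf{Membership.} Since $u_k^j\in C$, we have $d_k^j=y_k-u_k^j\in\mathcal D_k$. The projection characterization gives $w_j-u_k^j\in N_C(u_k^j)$. Multiplying by $\delta_k>0$, this is $c_k-A^\top v_j-\delta_k u_k^j=-\xi_k^j\in N_C(u_k^j)$.

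Because $\mathcal D_k=y_k-C$ is the image of $C$ under the map $u\mapsto y_k-u$, normals transform by a sign flip: $N_{\mathcal D_k}(y_k-u)=-N_C(u)$. Hence $\xi_k^j\in N_{\mathcal D_k}(d_k^j)$, which is the first claim.

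\textbf{Identity \eqref{eq:certificate_identity}.} I will expand the left-hand side using $H_k=A^\top A+\delta_k\mathbb I$ and $\nabla f(y_k)=A^\top(Ay_k-b)$. Substituting $d_k^j=y_k-u_k^j$ gives
\[
H_kd_k^j-z_k-\nabla f(y_k)=-A^\top Au_k^j+\delta_k y_k-\delta_k u_k^j-z_k+A^\top b .
\]
By the definition of $c_k$, the terms $\delta_k y_k-z_k+A^\top b$ combine into $c_k$. Adding $\xi_k^j=A^\top v_j+\delta_k u_k^j-c_k$, the $c_k$ and $\delta_k u_k^j$ terms cancel, leaving
\[
A^\top v_j-A^\top A u_k^j=A^\top\bigl(v_j-Au_k^j\bigr)=A^\top\nabla\Psi_k(v_j)
\]
by \eqref{eq:inner_system}. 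This step is pure bookkeeping.

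\textbf{Inexact conditions.} The first condition in \eqref{eq:dual_inexact_cond}, combined with the identity just proved, is literally the residual bound in \eqref{eq:inexact_subproblem_condition}.

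For the second, write $R:=A^\top\nabla\Psi_k(v_j)$. The identity gives
\[
\langle z_k+\nabla f(y_k),d_k^j\rangle=\langle H_kd_k^j,d_k^j\rangle+\langle\xi_k^j,d_k^j\rangle-\langle R,d_k^j\rangle .
\]
The hypothesis $\langle R,d_k^j\rangle\le\tau\langle H_kd_k^j,d_k^j\rangle+\langle\xi_k^j,d_k^j\rangle$ then yields $\langle z_k+\nabla f(y_k),d_k^j\rangle\ge(1-\tau)\langle H_kd_k^j,d_k^j\rangle$, which is the second condition.

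The only conceptual point is the sign convention in the normal cone to the reflected set $\mathcal D_k$. I expect that step, in the membership part, to be the main place where care is needed.
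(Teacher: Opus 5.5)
Your proposal is correct and follows the paper's proof essentially step for step. Both use the sign flip $N_{\mathcal{D}_k}(y_k-u)=-N_{\partial g^*(z_k)}(u)$ for the membership claim, the direct expansion with $H_k=A^\top A+\delta_k\mathbb{I}_n$ and the definition of $c_k$ for the identity, and the same rearrangement of the second condition. Your justification of $c_k-A^\top v_j-\delta_k u_k^j\in N_{\partial g^*(z_k)}(u_k^j)$ through the projection characterization is actually more precise than the paper's appeal to the first-order optimality condition of \eqref{eq:u_subproblem}, since $u_k^j$ is defined by a projection for an arbitrary $v_j$ rather than as a solution of that problem.
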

\begin{proof}
The first-order optimality condition of \eqref{eq:u_subproblem} gives $c_k-A^\top v-\delta_ku
\in N_{\partial g^*(z_k)}(u)$.
Moreover, $\mathcal{D}_k=y_k-\partial g^*(z_k)$, which implies that
\begin{equation}
N_{\mathcal{D}_k}(d) = N_{y_k-\partial g^*(z_k)}(y_k -u)=-N_{\partial g^*(z_k)}(u).
\label{eq:translated_normal}
\end{equation}
The definition of $\xi$ therefore yields $\xi\in N_{\mathcal{D}_k}(d)$. Finally, using $\nabla f(y_k)=A^\top(Ay_k-b)$ and $c_k = A^\top b + \delta_ky_k -z_k$,
we obtain
\begin{align*}
H_kd-(z_k+\nabla f(y_k))+\xi
&=(A^\top A+\delta_k \mathbb{I}_n)(y_k-u)
  -z_k-A^\top(Ay_k-b)+\xi\\
&=-A^\top Au+c_k-\delta_ku+\xi=A^\top\nabla\Psi_k(v).
\end{align*}
This proves \eqref{eq:certificate_identity}. Substituting \eqref{eq:certificate_identity} into the first condition of \eqref{eq:inexact_subproblem_condition} gives that of \eqref{eq:dual_inexact_cond}. Moreover, notice that the second condition of \eqref{eq:dual_inexact_cond} is also equivalent to that of \eqref{eq:inexact_subproblem_condition}. Indeed, using  \eqref{eq:certificate_identity}, we deduce that
\begin{align*}
&\la A^\top \nabla\Psi_k(v_j),d_k^j\ra\leq \la \tau H_kd_k^j+\xi_k^j,d_k^j\ra\\
\Longleftrightarrow\,\, &\la H_kd_k^j-(z_k+\nabla f(y_k))+\xi_k^j,d_k^j\ra - \la \tau H_kd_k^j+\xi_k^j,d_k^j\ra \leq 0\\
\Longleftrightarrow\,\, &\la z_k+\nabla f(y_k),d_k^j\ra \geq (1-\tau) \la H_kd_k^j,d_k^j\ra,
\end{align*}
which establish the equivalence of \eqref{eq:inexact_subproblem_condition} and \eqref{eq:dual_inexact_cond}. The proof is complete.
\end{proof}

To evaluate the gradient as in \eqref{eq:inner_system}, one need to be able to evaluate the projection $P_{\partial g^*(z_k)}$. In the subsequent analysis, we show that
for some classes of problems with popularly used in machine learning and statistics, this projection can be computed efficiently.

{\bf (i) Nonnegative least squares problem.} In this problem, the regularizer is the indicator of the nonnegative orthant $\R^n_+$, i.e., $g(x):=\delta_{\R^n_+}(x)$  for $x\in \R^n$. We have
\begin{equation}\label{eq:g*delta}
    g^* =(\delta_{\R^n_+})^* = \delta_{(\R^n_+)^\circ} = \delta_{\R^n_-},
\end{equation}
where $\R^n_{-}$ is the nonpositive orthant. For any $z\in \R^n_{-}$, we derive 
\begin{equation}
\partial g^* (z) = N_{\R^n_{-}}(z) = \left\{x\in \R^n \;\Bigg|\; 
    \begin{cases}
    x_i =0, &z_i<0 \\
    x_i \ge 0, &z_i=0
    \end{cases}
    \right\}.
\end{equation}
Thus, the projection onto $\partial g^*(z)$ is defined elementwise as
\[
\left(P_{\partial g^*(z)}(w))\right)_i:=
\begin{cases}
0, & z_i <0,\\
\max\{w_i,0\}, & z_i=0.
\end{cases}
\]

{\bf (ii) Lasso problem.} In this problem, the regularizer is the $\ell_1$ norm, i.e., $$g(x):=\lambda\sum_{i=1}^n |x_i| \quad x\in \R^n,$$ 
where $\lambda>0$ is the regularization parameter. We have
\begin{equation}\label{eq:g*delta}
    g^* =(\lambda\|\cdot\|_{1})^* = (\sigma_{\lambda\mathbb{B}_{\infty}})^* = \delta_{\lambda\mathbb{B}_{\infty}},
\end{equation}
where $\B_\infty=\{v\in \R^n \,|\, |v_i|\le 1,\, i=1,\ldots, n\}$ is the $\ell_\infty$ unit ball in $\R^n$. For any $z\in \lambda\B_\infty$, it is well-known that 
\begin{equation}\label{eq:normal_Binfinity}
\partial g^* (z) = N_{\lambda\mathbb{B}_{\infty}}(z) = \left\{x\in \R^n \;\Bigg|\; 
    \begin{cases}
    x_i =0, &|z_i|<\lambda \\
    x_i z_i, \ge 0 &|z_i|=\lambda
    \end{cases}
    \right\}.
\end{equation}
Using the above formula, we derive the projection onto $\partial g^*(z)$ elementwise as
\[
\left(P_{\partial g^*(z)}(w)\right)_i:=
\begin{cases}
0, & |z_i| <\lambda,\\
\max\{w_i,0\}, & z_i=\lambda,\\
\min\{w_i,0\}, & z_i=-\lambda.
\end{cases}
\]
Let $J_k:= \{i \in \{1,\ldots,n\} |\, |(z_k)_i| <\lambda\}$. Observe that instead of implementing {\bf Algorithm~\ref{alg:hz_inner}} using naive matrix-free matrix-vector  products $A^{\top}v$ with $\mathcal{O}(mn)$, one  can exploit the sparsity of the projection formula and use $A^{\top}_{J_k^c}v$ with reduced complexity $\mathcal{O}(m|J_k^c|).$

{\bf (iii) Fused Lasso problem.}
In this problem, $g$ is the addition of the $\ell_1$-norm with the 1D total variation semi-norm, i.e., for $\lambda_1,\lambda_2>0$ let $$g(x):= \lambda_1\|x\|_1+\lambda_2\|Dx\|_1,$$ 
where $D \in \R^{(n-1)\times n}$ is the {\em difference matrix} such that
\[
(Dx)_i:=x_i-x_{i+1}, \quad i=1,\ldots,n-1.
\]
Denote by $h:=\|\cdot\|_1$ the $\ell_1$ norm on $\R^{2n-1}$. 
Then the fused Lasso regularizer can be written as the composition $g(x)=h(\widetilde{D}x)$ where $\widetilde D
:=
\begin{bmatrix}
\lambda_1 I_n & \lambda_2 D^\top
\end{bmatrix}^{\!\top}
\in\R^{(2n-1)\times n}$. For any $z\in \R^n$, the Fenchel-Rockafellar conjugate formula \cite[Theorem~16.3]{Rockafellar70} gives us that  
\begin{equation}\label{eq:FR}
    g^* (z) = \inf \left\{\delta_{\B^{2n -1}_{\infty}}(y)\mid \Tilde{D}^*y=z\right\}= \delta_{\Tilde{D}^*(\B^{2n-1}_{\infty})}(z).
\end{equation}
where $\B^{2n-1}_\infty=\{v\in \R^{2n-1} \,|\, |v_i|\le 1,\, i=1,\ldots, 2n-1\}= \B^{n}_\infty\times \B^{n-1}_\infty$ is the $\ell_\infty$ ball in $\R^{2n-1}$. Hence, for $z\in\dom g^*=\Tilde{D}^*(\B^{2n-1}_{\infty})$, 
there exists $q=(\begin{matrix}
a^\top & b^\top
\end{matrix})^{\!\top} \in \B^{2n-1}_{\infty}$ with $a \in \R^n, b \in \R^{n-1}$ such that 
\[
z = \Tilde{D}^*q = \lambda_1a + \lambda_2D^\top b.
\]
Moreover, \cite[Theorem 6.43]{Rockafellar98} gives
$$
x\in\partial g^*(z)
\iff x \in N_{\Tilde{D}^*(\B^{2n-1}_{\infty})}(\Tilde{D}^*q) \iff
\Tilde{D}x\in N_{\B^{2n-1}_{\infty}}(q).
$$
Let $u:=\lambda_1 a, v := \lambda_2b$. Because $N_{\B^{2n-1}_{\infty}}(q) = N_{\B^{n}_{\infty}}(a) \times N_{\B^{n-1}_{\infty}}(b)$, we deduce from the above that 
\begin{equation*}
\begin{aligned}
\partial g^*(z) 
&= \left\{x \in \R^n |\, \lambda_1x\in N_{\B^{n}_{\infty}}(a) \text{ and } \lambda_2 Dx\in N_{\B^{n-1}_{\infty}}(b)  \right\}\\
&= \left\{x \in \R^n |\, x\in N_{\lambda_1\B^{n}_{\infty}}(u) \text{ and }Dx\in N_{\lambda_2\B^{n-1}_{\infty}}(v)  \right\}.
\end{aligned}
\end{equation*}
Since $(Dx)_i=x_i-x_{i+1}$, using \eqref{eq:normal_Binfinity} gives
\begin{equation}
\partial g^*(z)
=
\left\{
x\in\R^n
\ \middle|\
\begin{aligned}
&\begin{cases}
x_i=0, & |u_i|<\lambda_1,\\
u_i x_i\ge 0, & |u_i|=\lambda_1,
\end{cases}
&& i=1,\ldots,n,\\[1mm]
&\begin{cases}
x_i=x_{i+1}, & |v_i|<\lambda_2,\\
v_i(x_i-x_{i+1})\ge 0, & |v_i|=\lambda_2,
\end{cases}
&& i=1,\ldots,n-1
\end{aligned}
\right\}.
\end{equation}
Thus, for any $z= u + D^\top v$ with $\|u\|_{\infty}\leq \lambda_1, \|v\|_{\infty}\leq \lambda_2$, the projection onto $\partial g^*(z)$ can be computed by solving
\begin{equation}\label{eq:proj-fused-lasso}
\begin{aligned}
P_{\partial g^*(z)}(w)\in\argmin_{x\in \partial g^* (z)}\,\, \frac{1}{2}\sum_{i=1}^n (x_i-w_i)^2.
\end{aligned}
\end{equation}

Define
$\mathcal I(z):=\{i\in \{1,\ldots,n-1\}|\,|v_i|=\lambda_2\}$. Set $p:=|\mathcal I(z)|$ and, when
$p>0$, sort its elements in increasing order as
$t_1<\ldots<t_p$. Let $t_0:=0$, $t_{p+1}:=n$, and the index set
\[
G_r:=\{t_{r-1}+1,\ldots,t_r\},
\qquad r=1,\ldots,p+1.
\]
Notice that for every for $ t_{r-1}<i<t_r$ and $r = 1,\ldots,p+1$, $|v_i| < \lambda_2$, which further implies $x_i = x_{i+1}$. Therefore, $x$ has common constant value on every group belongs to $G_r$. Denote this value to be $\theta_r$, then we obtain
\begin{equation}\label{eq:decompose}
\begin{aligned}
\frac12 \sum_{i=1}^n (x_i-w_i)^2 
= \frac12 \sum_{r=1}^{p+1} \sum_{i=t_{r-1}+1}^{t_r} (\theta_r-w_i)^2 
= \frac12 \sum_{r=1}^{p+1} \left[|G_r|(\theta_r - \Bar{w}_r)^2 + \sum_{i \in G_r} (w_i-\Bar{w}_r)^2\right],
\end{aligned}
\end{equation}
where $\Bar w_r:=|G_r|^{-1}\sum_{i\in G_r}w_i$. Since the second sum on the right hand side of the last equation does not depend on $\theta$, we can drop it from the objective function. Hence, the projection in \eqref{eq:proj-fused-lasso} can be found by solving for $\theta^*$ such that
\begin{equation}\label{eq:proj-fused-reduced}
\begin{aligned}
\theta^*\in\argmin_{\theta\,\in\,\R^{p+1}}\quad
&\frac12\sum_{r=1}^{p+1}|G_r|(\theta_r-\bar w_r)^2\\
\text{subject to}\quad
&
\begin{cases}
\theta_r=0,
& |u_i|<\lambda_1,\quad i\in G_r,\\
u_i\theta_r\ge0,
& |u_i|=\lambda_1,\quad i\in G_r,\\
v_{t_r}(\theta_r-\theta_{r+1})\ge0,
& r=1,\ldots,p,
\end{cases}
\qquad r=1,\ldots,p+1.
\end{aligned}
\end{equation}
The projection is then recovered from
$[P_{\partial g^*(z)}(w)]_i=\theta_r^*$ for
$i\in G_r$. The
dynamic programming isotonic regression method in
\cite[Sections~3.3--3.4]{weiss2019contrast} solves this problem exactly
in $\mathcal{O}(p+1)$ operations. In case $p=0$, let $\Bar{w}:= n^{-1}\sum_{i=1}^n w_i$ ,the projection $P_{\partial g^*(z)}(w)$ is explicitly computed as
\[
P_{\partial g^*(z)}(w)
:=
\begin{cases}
\max\{\Bar{w},0\}\mathbf{1}_n,
& u_i=\lambda_1, \forall i\\
\min\{\Bar{w},0\}\mathbf{1}_n,
& u_i=-\lambda_1, \forall i\\
0_n,
& \text{otherwise}.
\end{cases}
\]

\begin{Remark}[Recovery of the fused-Lasso subgradient components]
{\rm For a fixed iteration $k\in \N\cup \{0\}$, the components $u$ and $v$ in the representation $z_k = u + D^\top v$ can be recovered directly from quantities computed when evaluating the proximal-gradient step from the outer loop. Indeed, observe that in Step 1 of {\bf Algorithm~\ref{alg:Mod_regularized_ESN}}, one has
\begin{equation}\label{eq:fused_prox}
y_k=\prox_{\alpha(\lambda_1\|\cdot\|_1+\lambda_2\|D \cdot\|_1)}(x_k-\alpha\nabla f(x_k)) = \prox_{\alpha \lambda_1\|\cdot\|_1}\left(\prox_{\alpha\lambda_2\|D \cdot\|_1}(x_k-\alpha\nabla f(x_k))\right)
\end{equation}
Let $\Tilde{y}_k := \prox_{\alpha\lambda_2\|D \cdot\|_1}(x_k-\alpha\nabla f(x_k))$ and $y_k =\prox_{\alpha\lambda_1\|\cdot\|_1}(\Tilde{y}_k)$, then \eqref{eq:res} implies that
\[
\dfrac{ \Tilde{y}_k - y_k}{\alpha} = \lambda_1 a\quad \text{and} \quad     \dfrac{x_k-\alpha\nabla f(x_k) - \Tilde{y}_k}{\alpha} = \lambda_2 D^\top b.
\]
where $a \in \partial \|y_k\|_1$ and $b\in \partial \| D\Tilde{y}\|_1$, respectively. Let $u:= \lambda_1a, v:= \lambda_2 b$, note that since $D$ is full row rank, $DD^\top$ is invertible. One can derive from the above that 
\[
u =\dfrac{ \Tilde{y}_k - y_k}{\alpha} \quad \text{and} \quad   v=(DD^\top)^{-1}D \left( \dfrac{x_k-\alpha\nabla f(x_k) - \Tilde{y}_k}{\alpha} \right).\]
It follows from Step 2 of {\bf Algorithm~\ref{alg:Mod_regularized_ESN}} that
\[
z_k = \dfrac{x_k- y_k}{\alpha} - \nabla f(x_k) = \dfrac{x_k-\alpha \nabla f(x_k)- \Tilde{y}_k + \Tilde{y}_k -y_k}{\alpha} = u +D^\top v.
\]
Thus, the required pair $(u,v)$ is available from the proximal-gradient step of the outer procedure.}
\end{Remark}

\section{Numerical experiments}\label{sec:numerical}
In this section, we evaluate the numerical performance of
Algorithm~\ref{alg:Mod_regularized_ESN} on the regularized least-squares
problem~\eqref{eq:leastsquare}, where the regularizer $g$ is either the Lasso
or the fused-Lasso regularizer studied in Section~\ref{sec:efficient_implementation}.
We conduct two complementary sets of experiments, corresponding to the two
typical settings in which the proposed algorithm may be used: synthetic data
and real-world data. In the first set, we use randomly generated instances of
varying dimensions to assess the efficiency and scalability of the proposed
method against several well-established first-order and second-order
algorithms. In the second set, we evaluate the algorithm on real datasets and
summarize the results using the performance profiles of~\cite{DolanMore2002}.
These datasets range from medium-scale biomedical datasets, originally from the Kent Ridge
Biomedical Data Set Repository~\cite{LiLiu2002KentRidge}, to large-scale datasets from the LIBSVM
library~\cite{chang2011libsvm}.

\subsection{Implementation details}

For all of our numerical experiments, we choose the initial iterate $x_0 := 0$ and use the following \emph{relative KKT residual} stopping criterion, suggested in~\cite{LST18,LST18Fused}, to determine the quality of an approximate optimal solution $x_k$. Specifically, we terminate the algorithms when one of the following condition satisfied: either
\begin{equation}\label{eq:eta_residual}
\eta
:=\dfrac{\|x_k - \prox_{g} (x_k - A^T(Ax_k - b))\|}{1+\|x_k\| + \|Ax_k - b \|} \leq 10^{-6},
\end{equation}
or the maximum number of iterations reaches $10000$, or when the maximum runtime permitted is reached. All experiments are run in MATLAB R2026a on an AMD Ryzen 9 5900X with 32 GB of RAM.




\subsection{Synthetic dataset regression experiments}

In this section, to examine the performance of the tested methods under a controlled setting, we consider randomly generated instances of the regularized least-squares problem. We use three increasingly large dimensions
\[
(m,n)\in\{(1000,3000),(2000,6000),(4000,12000)\}.
\]
This configuration allows the underdetermination ratio $n/m=3$ remains fixed while both dimensions double successively. For each dimension, the entries of $A\in\mathbb{R}^{m\times n}$ are generated independently from the standard normal distribution $\mathcal{N}(0,1)$, after which every column is normalized to have unit Euclidean norm. For each experiment, the reference signals $x^\dagger$, the observed signal $b$ and regularization parameters are generated as follows:

\begin{enumerate}[\bf (i)]

\item \textbf{Lasso regularizer.}
The reference signal $x^\dagger\in\mathbb{R}^n$ has $s=0.05n\in\{150,300,600\}$ nonzero components for $n\in\{3000,6000,12000\}$, respectively. Its support is selected uniformly at random among all subsets of $\{1,\ldots,n\}$ having cardinality $s$. For every selected index $i$, we set $x_i^\dagger=\xi_i a_i$, where $\xi_i\in\{-1,1\}$ is chosen with equal probability and $a_i$ is drawn uniformly from $[0.5,1.5]$; all remaining components are set to zero. Thus, for example, when $n=3000$, the signal $x^\dagger$ has exactly $150$ randomly located nonzero components. We choose
\[
\lambda=10^{-3}\|A^\top b\|_\infty
\]
and generate $b=Ax^\dagger+\varepsilon$, where $\varepsilon\sim\mathcal{N}(0,10^{-3}\mathbb{I}_m)$.

\item \textbf{Fused-Lasso regularizer.}
The coordinates of $x^\dagger\in\mathbb{R}^n$ are partitioned into $q=0.02n\in\{60,120,240\}$ consecutive blocks of length $50$. Half of the block levels are set to zero, while each remaining block is assigned a constant value $\xi_j a_j$, where $\xi_j\in\{-1,1\}$ is chosen with equal probability and $a_j$ is drawn uniformly from $[0.5,1.5]$. For example, the first few blocks may have the form
\[
x^\dagger=
\big(
\underbrace{0,\ldots,0}_{50},
\underbrace{1.2,\ldots,1.2}_{50},
\underbrace{0,\ldots,0}_{50},
\underbrace{-0.8,\ldots,-0.8}_{50},
\ldots
\big)^\top,
\]
with the remaining blocks generated in the same manner. This construction produces a sparse piecewise-constant signal consistent with the structure promoted by the fused-Lasso regularizer. We choose
\[
\lambda_1=10^{-3}\|A^\top b\|_\infty,
\qquad
\lambda_2=\lambda_1,
\]
and again generate $b=Ax^\dagger+\varepsilon$, where $\varepsilon\sim\mathcal{N}(0,10^{-3}I_m)$.

\end{enumerate}

\begin{figure}[h]
    \centering
    \includegraphics[width=0.99\linewidth]{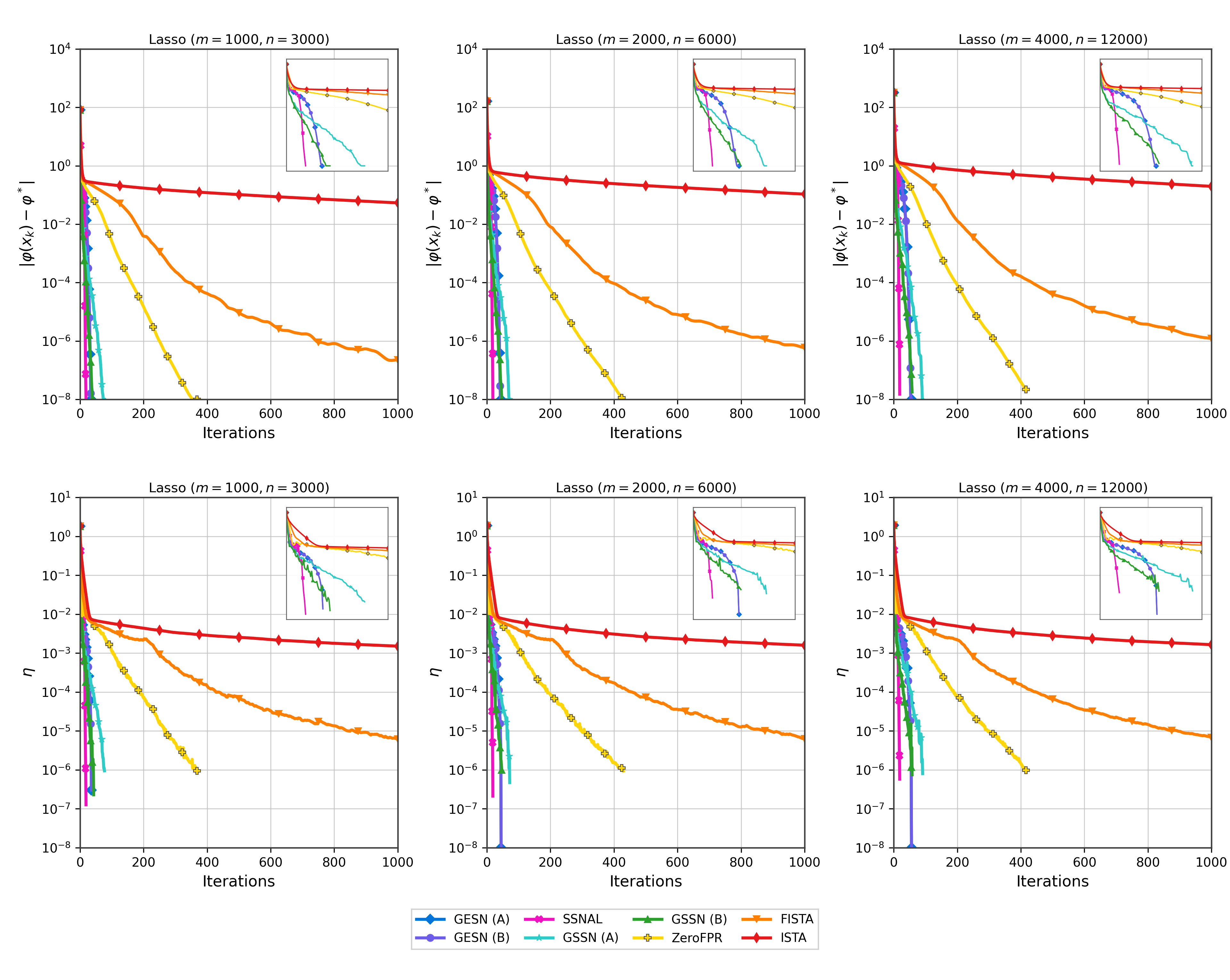}
    \caption{Plots for Lasso problem}
    \label{fig:random_lasso}
\end{figure}
Regarding the hyperparameter setting of {\bf Algorithm~\ref{alg:Mod_regularized_ESN}}, we use $\alpha = 1.9/\|A\|^2$, $\bar\delta = 0.5$,
and the inexactness parameters $\eta = 0.99$, $c = 10^{6}$, and
$\tau = 0.25$. The line search uses $\sigma = 0.25$
and the backtracking factor $\theta = 0.5$. The
subproblem is solved by the nonlinear preconditioned CG method in \cite{HagerZhang2013} with a Nyström-scaled spectral transform with rank at most 256. We tested two variants of GESN, GESN(A) and GESN(B), in which the A variant uses a larger maximum number of inner iterations, $100000$ versus $800$ for the other variant. For this experiment, we consider the following algorithms:

\begin{enumerate}
\item The Iterative Shrinkage-Thresholding Algorithm (ISTA) in \cite{BeckTeboulle09} with fixed stepsize $1.9/\|A\|^2$.
\item The Fast Iterative Shrinkage-Thresholding Algorithm (FISTA) in \cite{BeckTeboulle09} with fixed stepsize $1/\|A\|^2$.
\item The Semismooth Newton Augmented Lagrangian method (SSNAL) in \cite{LST18,LST18Fused}.
\item The ZeroFPR method with a FBE linesearch in \cite{T18}.
\item The Globalized SCD Semismooth$^{*}$ Newton method (GSSN) in \cite{GfrererCOA25}.
\end{enumerate}

For the settings of the compared algorithms, we run SSNAL and ZeroFPR with
their default configurations of the released code, which are presented in the Appendix. For GSSN,
we find that the publicly released implementation~\cite{GfrererCOA25} solves
the Newton subproblem differently from Algorithm~2 of~\cite{GfrererCOA25}, so
we report two versions that differ largely in how the inner solver is stopped. The first version,
which we name GSSN(A), terminates the inner solver once the relative residual
criterion of~\cite[Algorithm~2, Step~2]{GfrererCOA25} is satisfied. The maximum number of iteration for the inner solver of GSSN(A) is set to
$100000$ iterations. The other version, GSSN(B), uses the default settings of the released code that can also be found in the Appendix. It stops the subproblem solver once the relative residual of the reduced Newton system falls
below the fixed tolerance $0.1$, and it allows at most $30$ inner iterations per
outer iteration. In this experiment, all methods are terminated when the specified stopping criterion \eqref{eq:eta_residual} is satisfied or the prescribed time limit of $600$ seconds.

To evaluate convergence, we consider two type of plots. The first plots the objective gap $|\varphi(x_k)-\varphi^*|$
with respect to the number of iterations, where $\varphi^*$ is the minimum objective value reached by all methods in the considered experiment. The second plots the relative residual defined in \eqref{eq:eta_residual} with respect to the number of iterations. The convergence results for Lasso and fused Lasso are displayed in Figure~\ref{fig:random_lasso} and Figure~\ref{fig:random_fused_lasso}, respectively. The top row shows the objective gap versus the iteration count, whereas the bottom row shows the relative KKT residual $\eta$ versus the iteration count. In each figure, the dimensions of the input data is included at the top. Meanwhile, the runtime of each algorithm, along with the number of iterations and the KKT residual $\eta$, are detailed in Table~\ref{tab:random_lasso} and Table~\ref{tab:random_fused_lasso}. In the tables, the fastest and second-fastest runtimes are indicated in bold and
underlined, respectively.

\begin{figure}[h]
    \centering
    \includegraphics[width=0.99\linewidth]{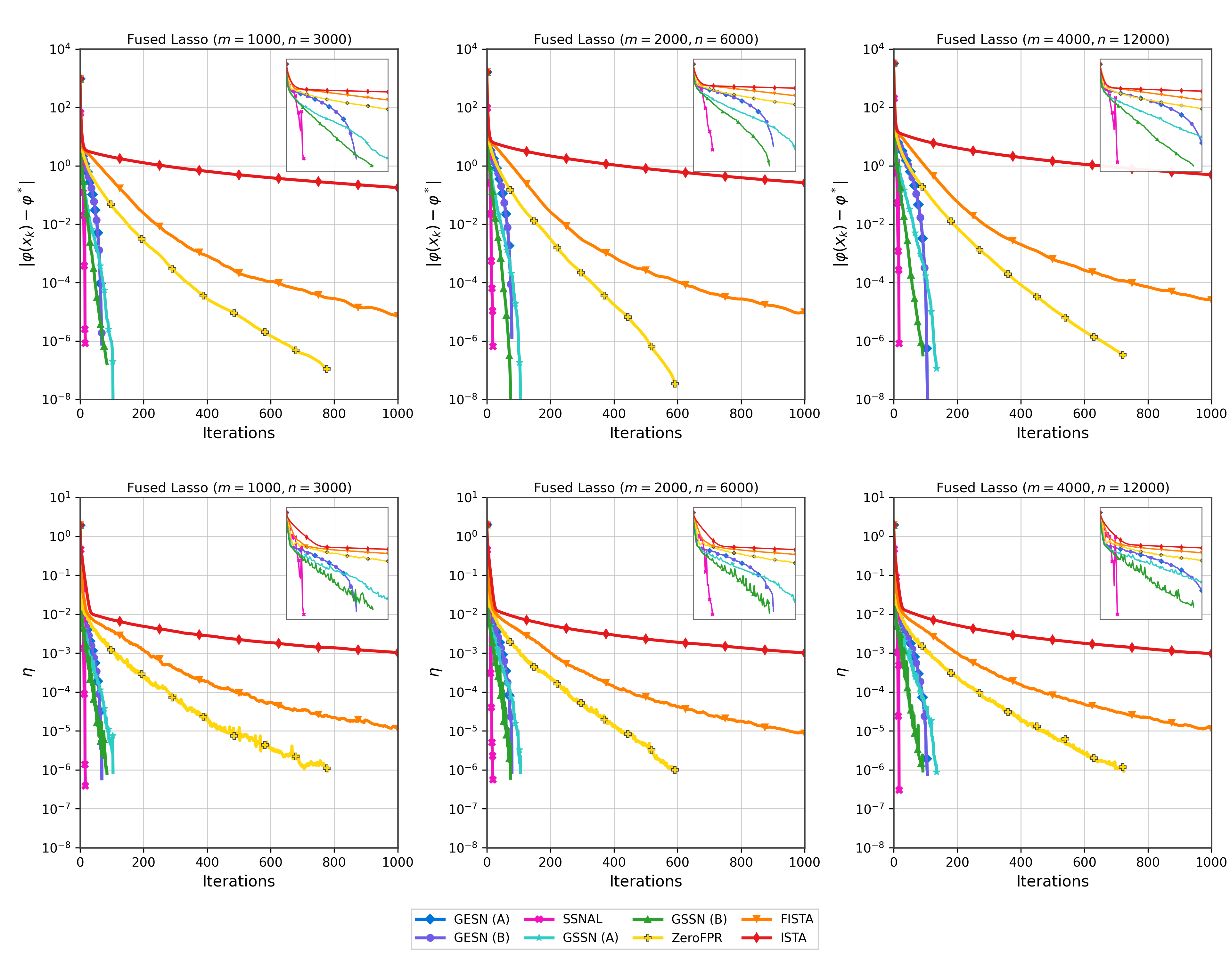}
    \caption{Plots for fused Lasso problem}
    \label{fig:random_fused_lasso}
\end{figure}

\begin{table}
\centering
\footnotesize
\setlength{\tabcolsep}{3.2pt}
\renewcommand{\arraystretch}{1.12}
\setlength{\arrayrulewidth}{0.4pt}

\begin{adjustbox}{max width=\textwidth}
\begin{tabular}{|l|c|c|c|c|c|c|c|c|c|}
\hline
\multirow{2}{*}{\textbf{Solver}}
& \multicolumn{3}{c|}{\textbf{Runtime (s)}}
& \multicolumn{3}{c|}{\textbf{Iterations}}
& \multicolumn{3}{c|}{\textbf{Residual $\eta$}}
\\
\cline{2-10}
&
$1000{\times}3000$
& $2000{\times}6000$
& $4000{\times}12000$
&
$1000{\times}3000$
& $2000{\times}6000$
& $4000{\times}12000$
&
$1000{\times}3000$
& $2000{\times}6000$
& $4000{\times}12000$
\\
\hline

GESN (A)
& \underline{1.10} & \textbf{5.77} & \underline{34.98}
& 36 & 45 & 56
& $3.02{\times}10^{-7}$
& $1.14{\times}10^{-9}$
& $4.81{\times}10^{-10}$
\\ \hline

GESN (B)
& \textbf{1.08} & \underline{5.89} & \textbf{34.91}
& 36 & 45 & 56
& $3.02{\times}10^{-7}$
& $1.14{\times}10^{-9}$
& $4.81{\times}10^{-10}$
\\ \hline

SSNAL
& 3.22 & 11.97 & 68.18
& 19 & 19 & 19
& $1.26{\times}10^{-7}$
& $2.12{\times}10^{-7}$
& $5.77{\times}10^{-7}$
\\ \hline

GSSN (A)
& 2.98 & 14.00 & 79.43
& 77 & 72 & 91
& $9.47{\times}10^{-7}$
& $4.70{\times}10^{-7}$
& $7.99{\times}10^{-7}$
\\ \hline

GSSN (B)
& 1.44 & 6.56 & 35.27
& 43 & 47 & 58
& $2.29{\times}10^{-7}$
& $9.99{\times}10^{-7}$
& $7.47{\times}10^{-7}$
\\ \hline

ZeroFPR
& 2.25 & 11.22 & 46.33
& 368 & 430 & 416
& $9.36{\times}10^{-7}$
& $9.34{\times}10^{-7}$
& $9.87{\times}10^{-7}$
\\ \hline

FISTA
& 5.11 & 31.30 & 130.80
& 1,821 & 1,913 & 1,926
& $9.99{\times}10^{-7}$
& $9.99{\times}10^{-7}$
& $1.00{\times}10^{-6}$
\\ \hline

ISTA
& 11.87 & 81.22 & 350.06
& 10,000 & 10,000 & 10,000
& $1.50{\times}10^{-6}$
& $3.38{\times}10^{-6}$
& $3.07{\times}10^{-6}$
\\ \hline

\end{tabular}
\end{adjustbox}
\caption{Three-run mean computational results for the Lasso problems.}
\label{tab:random_lasso}
\end{table}

Figures~\ref{fig:random_lasso} and~\ref{fig:random_fused_lasso} show that the proposed GESN variants reduce both the objective gap and the relative KKT residual rapidly for all problem. In particular, their convergence curves exhibit a quick downward trajectory, consistent with the fast local convergence result. Moreover, it can be observed from Table~\ref{tab:random_lasso} and Table~\ref{tab:random_fused_lasso} that the number of outer iterations increases only moderately as $(m,n)$ grows: GESN requires between $36$ and $56$ iterations for the Lasso problems and between $69$ and $106$ iterations for the fused-Lasso problems. By contrast, ZeroFPR and FISTA require several hundred and several thousand iterations, respectively, while ISTA reaches the maximum of $10{,}000$ iterations without satisfying the prescribed tolerance. The trajectories produced by GESN(A) and GESN(B) are nearly indistinguishable, indicating that the two implementations preserve essentially the same convergence behavior.

For the Lasso problems, Table~\ref{tab:random_lasso} shows that the two
variants of GESN attain the shortest and second-shortest runtimes for all
three dimensions. The best GESN runtimes are $1.08$, $5.77$, and $34.91$
seconds, respectively, which are $2.0$--$3.0$ times faster than SSNAL,
$2.3$--$2.8$ times faster than GSSN~(A), $1.3$--$2.1$ times faster than
ZeroFPR, and $3.7$--$5.4$ times faster than FISTA. The closest competitor is
GSSN~(B), whose runtime approaches that of GESN as the dimension grows and
is within $1\%$ of it for $(m,n)=(4000,12000)$. Although SSNAL requires
fewer outer iterations, the lower cost of each GESN iteration leads to
shorter total runtimes. Moreover, on the two larger instances GESN
terminates with residuals of order $10^{-9}$ and $10^{-10}$, two to three
orders of magnitude below the prescribed tolerance.

A similar pattern is observed for the fused-Lasso problems in
Table~\ref{tab:random_fused_lasso}. Again, the two GESN variants attain the
shortest and second-shortest runtimes for all three dimensions, with best
runtimes of $2.22$, $8.40$, and $75.04$ seconds, while requiring at most
$106$ outer iterations. GESN is $1.9$--$2.9$ times faster than SSNAL,
$1.4$--$2.6$ times faster than GSSN~(B), and $4.8$--$8.4$ times faster than
GSSN~(A). The margin over ZeroFPR decreases with the dimension, from a
factor of $2.8$ for $(m,n)=(1000,3000)$ to about $5\%$ for
$(m,n)=(4000,12000)$. ZeroFPR requires between $592$ and $779$ iterations,
which indicates a much lower cost per iteration, whereas each GESN
iteration achieves a substantially larger reduction of the residual.

In both tables, all methods except ISTA
attain the prescribed residual tolerance in \eqref{eq:eta_residual}. ISTA terminates at the iteration
limit with residuals of order $10^{-6}$ for the Lasso problems and
$10^{-5}$ for the fused-Lasso problems.

\begin{table}[H]
\centering
\footnotesize
\setlength{\tabcolsep}{3.2pt}
\renewcommand{\arraystretch}{1.12}
\setlength{\arrayrulewidth}{0.4pt}

\begin{adjustbox}{max width=\textwidth}
\begin{tabular}{|l|c|c|c|c|c|c|c|c|c|}
\hline
\multirow{2}{*}{\textbf{Solver}}
& \multicolumn{3}{c|}{\textbf{Runtime (s)}}
& \multicolumn{3}{c|}{\textbf{Iterations}}
& \multicolumn{3}{c|}{\textbf{Residual $\eta$}}
\\
\cline{2-10}
&
$1000{\times}3000$
& $2000{\times}6000$
& $4000{\times}12000$
&
$1000{\times}3000$
& $2000{\times}6000$
& $4000{\times}12000$
&
$1000{\times}3000$
& $2000{\times}6000$
& $4000{\times}12000$
\\
\hline

GESN (A)
& \textbf{2.22} & \underline{8.81} & \underline{77.03}
& 69 & 79 & 106
& $5.91{\times}10^{-7}$
& $8.66{\times}10^{-7}$
& $7.48{\times}10^{-7}$
\\ \hline

GESN (B)
& \underline{2.24} & \textbf{8.40} & \textbf{75.04}
& 69 & 79 & 106
& $5.91{\times}10^{-7}$
& $8.66{\times}10^{-7}$
& $7.48{\times}10^{-7}$
\\ \hline

SSNAL
& 5.24 & 24.24 & 144.30
& 17 & 19 & 17
& $3.86{\times}10^{-7}$
& $5.50{\times}10^{-7}$
& $3.03{\times}10^{-7}$
\\ \hline

GSSN (A)
& 13.20 & 70.71 & 363.44
& 104 & 106 & 135
& $8.54{\times}10^{-7}$
& $8.50{\times}10^{-7}$
& $8.66{\times}10^{-7}$
\\ \hline

GSSN (B)
& 4.82 & 21.86 & 108.00
& 85 & 75 & 92
& $8.09{\times}10^{-7}$
& $6.07{\times}10^{-7}$
& $9.34{\times}10^{-7}$
\\ \hline

ZeroFPR
& 6.11 & 16.77 & 78.97
& 779 & 592 & 724
& $9.35{\times}10^{-7}$
& $9.90{\times}10^{-7}$
& $9.25{\times}10^{-7}$
\\ \hline

FISTA
& 9.87 & 39.66 & 167.80
& 2,357 & 2,241 & 2,356
& $9.96{\times}10^{-7}$
& $9.99{\times}10^{-7}$
& $1.00{\times}10^{-6}$
\\ \hline

ISTA
& 28.21 & 94.32 & 362.57
& 10,000 & 10,000 & 10,000
& $2.46{\times}10^{-5}$
& $1.95{\times}10^{-5}$
& $2.19{\times}10^{-5}$
\\ \hline

\end{tabular}
\end{adjustbox}
\caption{Three-run mean computational results for the fused-Lasso problems.}
\label{tab:random_fused_lasso}
\end{table}

\subsection{Real datasets performance profile regression experiments}
\label{sec:real_data_experiments}

In this subsection, we illustrate the performace of the proposed algorithm on ten medium-scale biomedical datasets obtained from Kent Ridge Biomedical Data Set Repository \cite{LiLiu2002KentRidge} and ten large-scale datasets from the LIBSVM collection \cite{chang2011libsvm}.

\begin{table}[ht]
\centering
\footnotesize
\setlength{\tabcolsep}{3.5pt}
\renewcommand{\arraystretch}{0.95}

\begin{minipage}[t]{0.49\textwidth}
\vspace{0pt}
\centering
\caption{Statistics of the biomedical test instances.}
\label{tab:biomedical_datasets}
\vspace{2pt}
\begin{tabular}{|c|r|r|c|}
\hline
probname & $m$ & $n$ & $\lambda_{\max}(AA^*)$ \\ \hline
DLBCLH        & 160 & 7399   & $8.38\mathrm{e}{+02}$ \\ \hline
DLBCLN        & 160 & 7399   & $8.38\mathrm{e}{+02}$ \\ \hline
DLBCLS        & 47  & 4026   & $6.23\mathrm{e}{+02}$ \\ \hline
lungH1        & 203 & 12600  & $6.64\mathrm{e}{+03}$ \\ \hline
lungH2        & 149 & 12533  & $6.96\mathrm{e}{+03}$ \\ \hline
lungM         & 96  & 7129   & $4.46\mathrm{e}{+03}$ \\ \hline
lungO         & 39  & 2880   & $2.30\mathrm{e}{+03}$ \\ \hline
NervousSystem & 60  & 7129   & $3.34\mathrm{e}{+03}$ \\ \hline
ovarianP      & 253 & 15153  & $1.30\mathrm{e}{+04}$ \\ \hline
ovarianS      & 216 & 373401 & $2.90\mathrm{e}{+05}$ \\ \hline
\end{tabular}
\end{minipage}
\hfill
\begin{minipage}[t]{0.49\textwidth}
\vspace{0pt}
\centering
\caption{Statistics of the UCI test instances.}
\label{tab:libsvm_datasets}
\vspace{2pt}
\begin{tabular}{|c|r|r|c|}
\hline
probname & $m$ & $n$ & $\lambda_{\max}(AA^*)$ \\ \hline
E2006.train      & 16087 & 150360  & $1.91\mathrm{e}{+05}$ \\ \hline
E2006.test       & 3308  & 150358  & $4.79\mathrm{e}{+04}$ \\ \hline
log1p.E2006.test & 3308  & 4272226 & $1.46\mathrm{e}{+07}$ \\ \hline
pyrim5           & 74    & 201376  & $1.22\mathrm{e}{+06}$ \\ \hline
triazines4       & 186   & 635376  & $2.07\mathrm{e}{+07}$ \\ \hline
abalone7         & 4177  & 6435    & $5.21\mathrm{e}{+05}$ \\ \hline
bodyfat7         & 252   & 116280  & $5.29\mathrm{e}{+04}$ \\ \hline
housing7         & 506   & 77520   & $3.28\mathrm{e}{+05}$ \\ \hline
mpg7             & 392   & 3432    & $1.28\mathrm{e}{+04}$ \\ \hline
space\_ga9       & 3107  & 5005    & $4.01\mathrm{e}{+03}$ \\ \hline
\end{tabular}
\end{minipage}
\end{table}

For the biomedical datasets, we extract 10 instances, namely \texttt{DLBCLH}, \texttt{DLBCLN},
\texttt{DLBCLS}, \texttt{lungH1}, \texttt{lungH2}, \texttt{lungM},
\texttt{lungO}, \texttt{NervousSystem}, \texttt{ovarianP}, and
\texttt{ovarianS}. During the data preprocessing stage, we normalize the columns of the matrix $A$ to have norm at most one for these datasets. For the LIBSVM collection, we also extract 10 instances, namely \texttt{E2006.train}, \texttt{E2006.test}, \texttt{log1p.E2006.test}, \texttt{pyrim}, \texttt{triazines}, \texttt{abalone}, \texttt{bodyfat}, \texttt{housing},
\texttt{mpg}, and \texttt{space\_ga}. Moreover, for the last seven of the aforementioned datasets, their original features are expanded by polynomial basis
functions, as suggested in \cite{LST18}. For example, the last digits in \texttt{space\_ga9} indicates that an order 9
polynomial is used to generate the basis functions. This naming convention is also adopted for the rest
of the expanded data sets. The dataset dimensions along with the largest eigenvalues of $AA^\top$ is summarized in Table~\ref{tab:biomedical_datasets} and Table~\ref{tab:libsvm_datasets}.

To further illustrate the performance of the proposed algorithm, we use the performance profile
in \cite{DolanMore2002}. Specifically, let $P$ denote the set of test
problems for one model and $S$ the set of solvers. For $p\in P$ and
$s\in S$, let $T_{p,s}$ be the total runtime of solver $s$ on problem $p$. If a method fails to attain \eqref{eq:eta_residual} before the prescribed time limit, we set
$T_{p,s}=+\infty$. Specifically, for a method $s$, we plot the graph of the function
\begin{equation}
    \rho_s(\tau)
    :=\frac{1}{|P|}
    \left|\left\{p\in\mathcal{P}\Bigg|\,   \frac{T_{p,s}}{\min_{s'\in\mathcal{S}}T_{p,s'}}\le\tau\right\}\right|
    \quad \text{for } \tau\ge1,
    \label{eq:num_performance_profile}
\end{equation}
where $|P|$ is the size of $P$. Thus, $\rho_s(\tau)$ is the fraction of test problems for which method $s$
requires at most $\tau$ times the runtime of the fastest method. For example, $\rho_s(1)$ is the fraction of problems on which the method $s$ performs the best. 

The results in the previous subsection suggest that the second-order methods
clearly outperform the first-order ones. We therefore compare the performance of our proposed
\textbf{Algorithm~\ref{alg:Mod_regularized_ESN}} against only methods of second-order type. The test settings are
summarized in Table~\ref{tab:benchmark_setup_realdata}. For the Lasso problem, we choose the regularization hyperparameter $\lambda = a\|A^\top b\|_\infty$ with $a \in \{10^{-2},10^{-3},10^{-4}\}$. For the fused Lasso problem, we set $\lambda_1 = \alpha_1\|A^\top b\|_\infty$ and
$\lambda_2 = \alpha_2\lambda_1$, where $\alpha_1 \in \{10^{-3},10^{-4}\}$ and
$\alpha_2 \in \{10,2,0.02,0.01\}$ for the biomedical datasets and
$\alpha_2 \in \{1,0.5,10^{-1},10^{-2}\}$ for the UCI/LIBSVM datasets. In
total, each benchmark contains 110 test instances. The same parameter grid is
used for every dataset in a benchmark, and every solver runs with a single
fixed configuration as describe in the previous subsection. The maximum prescribed time limit for every tested methods in this experiment is $1800$ seconds.

\begin{table}[H]
\centering
\resizebox{\textwidth}{!}{%
\begin{tabular}{llllrlr}
\toprule
&&&
\multicolumn{2}{c}{Biomedical}
&
\multicolumn{2}{c}{UCI/LIBSVM}
\\
\cmidrule(lr){4-5}\cmidrule(lr){6-7}
Model
& Methods compared with GESN
& Parameterization
& Hyperparameter grid
& No.
& Hyperparameter grid
& No.
\\
\midrule

Lasso
& SSNAL~\cite{LST18},
  GSSN~\cite{GfrererCOA25},
  ZeroFPR~\cite{T18}
& $\lambda=a\|A^\top b\|_\infty$
& $a\in\{10^{-2},10^{-3},10^{-4}\}$
& 30
& $a\in\{10^{-2},10^{-3},10^{-4}\}$
& 30
\\[1mm]

Fused Lasso
& SSNAL~\cite{LST18Fused},
  GSSN~\cite{GfrererCOA25},
  ZeroFPR~\cite{T18}
& $\lambda_1=\alpha_1\|A^\top b\|_\infty,\;
   \lambda_2=\alpha_2\lambda_1$
& $\begin{aligned}
   \alpha_1&\in\{10^{-3},10^{-4}\},\\
   \alpha_2&\in\{10,2,0.02,0.01\}
   \end{aligned}$
& 80
& $\begin{aligned}
   \alpha_1&\in\{10^{-3},10^{-4}\},\\
   \alpha_2&\in\{1,0.5,10^{-1},10^{-2}\}
   \end{aligned}$
& 80
\\[1mm]

\midrule
\multicolumn{3}{r}{Total}
& \multicolumn{1}{c}{}
& 110
& \multicolumn{1}{c}{}
& 110
\\
\bottomrule
\end{tabular}%
}
\caption{Methods and hyperparameter settings used in the biomedical and UCI/LIBSVM benchmarks.}
\label{tab:benchmark_setup_realdata}
\end{table}

Figures~\ref{fig:biomedical_profiles} and~\ref{fig:libsvm_profiles} report
the performance profiles of the tested methods on the biomedical datasets
(Table~\ref{tab:biomedical_datasets}) and the UCI/LIBSVM datasets
(Table~\ref{tab:libsvm_datasets}).

\begin{figure}[H]
    \centering
    \includegraphics[width=0.94\linewidth]{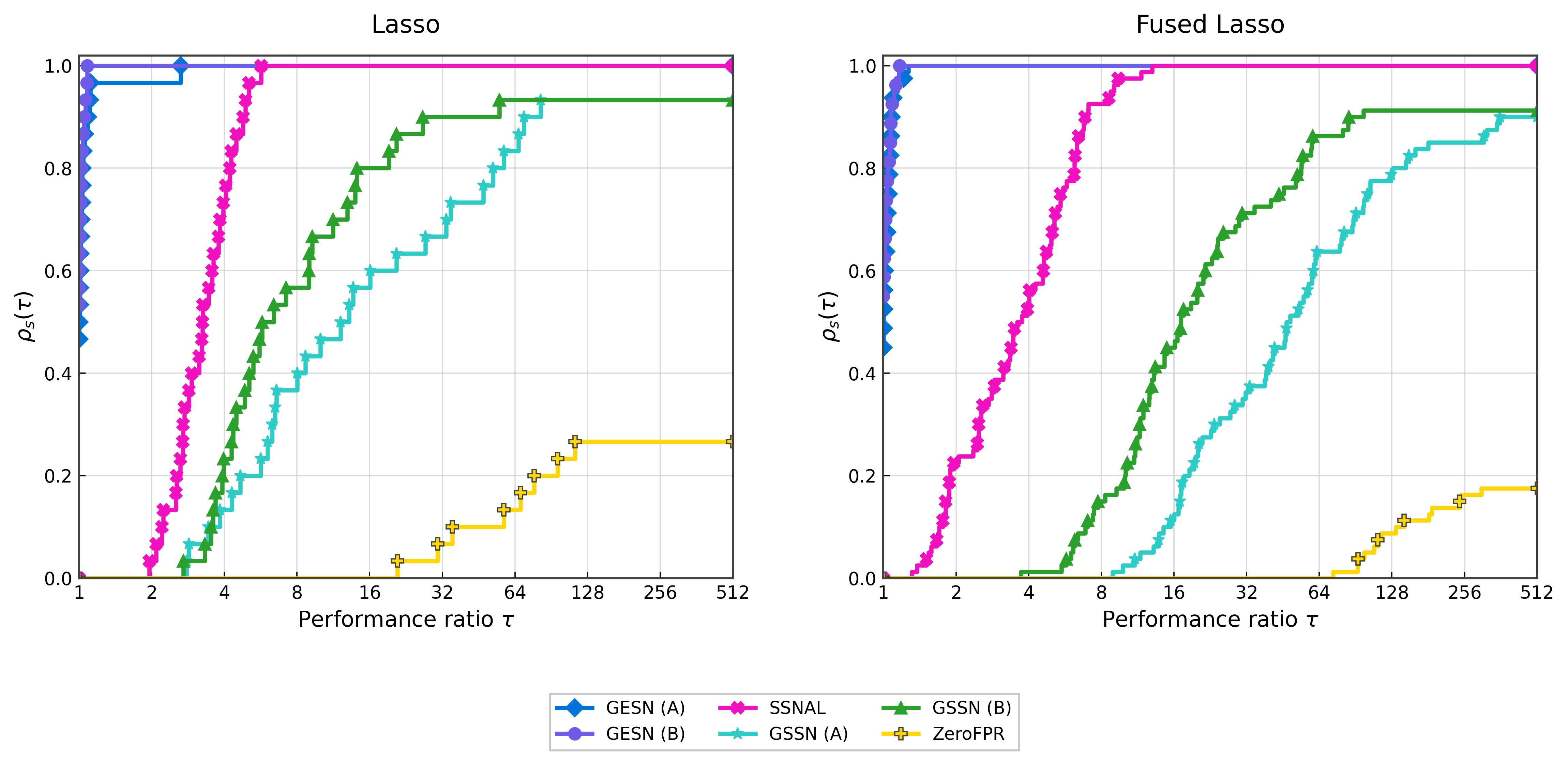}
    \caption{Performance profile for biomedical datasets}
    \label{fig:biomedical_profiles}
\end{figure}

\begin{figure}[H]
    \centering    \includegraphics[width=0.94\linewidth]{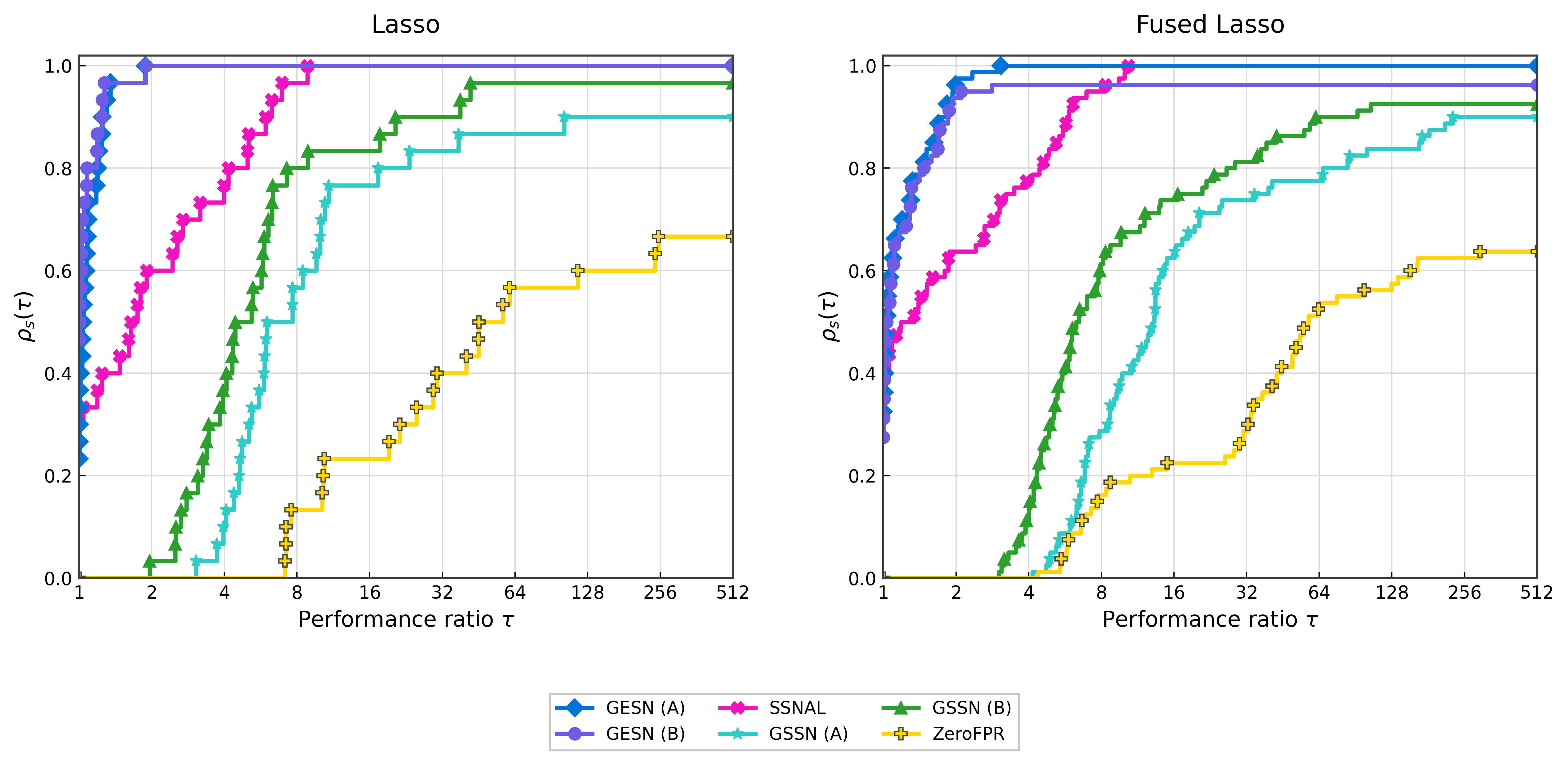}
    \caption{Performance profile for LIBSVM datasets}
    \label{fig:libsvm_profiles}
\end{figure}

On the biomedical datasets the two GESN variants achieve the best runtimes for both regularizers. For the Lasso problems, GESN(A) and GESN(B) attain the shortest runtime on $47\%$ and $53\%$ of the instances, respectively. GESN(B) stays within a factor of $1.08$ of the best runtime on all 30 instances, and GESN(A) is within a factor of two on all but one. For the fused-Lasso problems, both variants solve all 80 tested problems, each within a factor of $1.27$ of the best runtime. SSNAL also solves all tested instances; it is within a factor of four of the best runtime on $73\%$ of the Lasso and $54\%$ of the fused-Lasso instances. For GSSN, the B variant has consistently better performance than the A one, it solves 28 of the Lasso instances and 73 of the fused-Lasso instances. ZeroFPR reaches the prescribed tolerance on 8 Lasso and 14 fused-Lasso instances.

For the run results on LIBSVM instances, SSNAL attains the shortest runtime on $30\%$ of the Lasso and $40\%$ of the fused-Lasso instances whereas GESN(A) or (B) attains it on the remaining ones. For the Lasso problems, both GESN variants solve every instance within a factor of two of the best runtime. SSNAL also solves every instance, and on $60\%$ of them it is within a factor of two of the best. For the fused-Lasso problems, GESN(A) solves all 80 instances and is within a factor of two of the best runtime on $96\%$ of them while SSNAL solves all instances and is within a factor of two on $64\%$. GESN(B) behaves similarly to GESN(A), except on three instances where an inner solve requires more than its prescribed 800 CG iterations. GSSN(A), GSSN(B) and ZeroFPR solve most instances: 27, 29 and 20 of the 30 Lasso instances, and 72, 74 and 51 of the 80 fused-Lasso instances. Overall, the results indicate that GESN achieves runtimes close to the best on most instances.

For detailed numerical results of each algorithm on every dataset and hyperparameter setting, we refer to Tables~\ref{tab:biomedical_lasso_detail} and~\ref{tab:biomedical_fused_lasso_detail} (biomedical datasets) and Tables~\ref{tab:libsvm_lasso_detail} and~\ref{tab:libsvm_fused_lasso_detail} (UCI/LIBSVM datasets) in the Appendix.

\section{Conclusion}\label{sec:conclude}

In this paper, we proposed GESN, a globally convergent inexact Newton
method for convex composite optimization with polyhedral regularizers. The
method combines a proximal-gradient step with a second-order direction belongs to the effective set, which can be computed approximately under inexact
conditions that ensure descent. Unlike the recently developed
effective-subspace Newton method~\cite{NVV25}, which is a special instance of the coderivative-based Newton method \cite[Algorithm~1 and Algorithm~2]{KhanhMordukhovichPhat25} and the SCD semismooth$^*$ method {\rm \cite[Algorithm~4.2]{Gfrerer25}} when the regularizer is
the indicator or support function of a polyhedral set \cite[Corollary 3.7]{NVV25}, GESN is not a
coderivative-based method, as illustrated in Example~\ref{ex:not_coderivative_baed}. 

Another
distinctive feature of GESN is that its globalization does not rely on the
forward--backward envelope, which sets it apart from many other globally
convergent Newton-type methods for composite problems; see,
e.g.,~\cite{P13,P14,S17,T18,KhanhMordukhovichPhat24,KhanhMordukhovichPhat25,GfrererCOA25,Gfrerer25}.
Instead, its line search is a classical Armijo rule that requires only
evaluations of the smooth part, while still guaranteeing descent of the
composite objective. We established global convergence and, near
tilt-stable minimizers, eventual acceptance of the unit step and local
quadratic convergence. Numerical experiments on Lasso and fused-Lasso
problems with synthetic and real data show that GESN is competitive with,
and often faster than, state-of-the-art first-order and second-order
methods.

Several directions remain open. First, extending the convergence analysis
beyond tilt stability would broaden the applicability of the theory. Second,
the current global convergence result assumes that the gradient of the smooth
component is globally Lipschitz continuous. Relaxing this assumption to local
Lipschitz continuity by incorporating adaptive stepsize mechanisms, such as
those developed in~\cite{latafat2025}, is a natural next step. This would
require a new Lyapunov-type analysis to guarantee global convergence of an adaptive variant of GESN that does not need the Lipschitz constant. Third, it would be
of interest to extend the effective-set framework beyond polyhedral
regularizers, for example to group Lasso or spectral regularizers, whose
nonpolyhedral structure makes the local convergence analysis more delicate.

\section*{Acknowledgments}
We are deeply grateful to the author of \cite{GfrererCOA25} for kindly sharing the code implementation of their paper during the development of this paper. {We are also thankful to both anonymous referees for their careful readings, constructive suggestions, as well as deep remarks on other works that helped improve the original presentation significantly.}

\section{Appendix}

In this appendix, we present additional results for the experiment conducted in Section~\ref{sec:real_data_experiments}. The configuration set up for each tested algorithms is the following:
\begin{itemize}
\item SSNAL: The released code \footnote{https://github.com/MatOpt/SuiteLasso} of \cite{LST18,LST18Fused} with its default
settings and the Lipschitz constant $L=\|A\|^2$. The initial penalty parameter is
$\sigma_0=\max\{1/\sqrt{L},\min\{1,1/L,\lambda\}\}$ for Lasso and
$\sigma_0=\min\{1,1/\sqrt{L}\}$ for fused Lasso, and it is adjusted after each outer
iteration by the released rule within $[10^{-4},10^{7}]$ and $[10^{-4},10^{8}]$,
respectively. Each augmented Lagrangian subproblem is solved by the semismooth Newton method
with at most $10$ to $30$ iterations, depending on the progress of the outer iteration, and
the problem is rescaled during the iterations. For fused Lasso, the Newton systems are solved
with the released preconditioner with the set up precond$\,=2$.
\item GSSN(A): The released code \footnote{https://github.com/HGfrerer/GSSN\_Lasso} of \cite{GfrererCOA25}, with the Newton subproblem solved by the
trust-region CG method of \cite[Algorithm~2]{GfrererCOA25} with the stopping parameter
$\chi(t)=0.1/(1-\log(t/(t+1)))$, see \cite[Section 6]{GfrererCOA25}, or $100000$ CG iterations, and without the direct solver.
The line-search parameters are $\alpha=0.8$, $\beta=0.2$, $\sigma=0.1$, $\tau=0.25$.
\item GSSN(B): The same source code in GSSN(A) with its default settings: the
Newton subproblem is solved directly if the reduced system has dimension at most $200$, and
otherwise by preconditioned CG with relative tolerance $0.1$ and at most $30$ iterations;
$\alpha=0.8$, $\beta=0.2$, $\sigma=0.5$, $\tau=0.25$.
\item ZeroFPR: The implementation of \cite{T18} in the ForBES toolbox \footnote{https://github.com/kul-optec/ForBES} with its default
settings. The search directions are computed by L-BFGS with memory $10$, and the step along
them is chosen by a monotone backtracking line search on the forward--backward envelope with
the sufficient-decrease constant $\beta/(4\gamma)$, where $\beta=0.05$. The proximal-gradient
stepsize is fixed to $\gamma=(1-\beta)/L=0.95/L$, where $L=\lambda_{\max}(A^\top A)$ is
computed by ForBES at the start of the run.
\end{itemize}
\input{appendix_detail_tables}

\small

\bibliographystyle{abbrv}
\bibliography{bio}
\end{document}

%% file: appendix_detail_tables.tex

\begingroup
\scriptsize
\setlength{\tabcolsep}{1.5pt}
\setlength{\aboverulesep}{0pt}\setlength{\belowrulesep}{0pt}
\renewcommand{\arraystretch}{1.0}
\setlength{\LTcapwidth}{\linewidth}
\begin{longtable}{@{}ll*{6}{c}|*{6}{c}@{}}
\caption{Results of the tested methods on the Lasso problems with the biomedical datasets, where $\lambda=a\|A^\top b\|_\infty$. Each run is limited to 600 seconds and 10000 iterations. Red entries did not reach $\eta\le10^{-6}$: a run that reached the iteration limit within the time limit is reported with its final $\eta$ and runtime, and a run that reached the time limit is reported as ``--'' and ``$>$600''. The fastest successful time in each row is in bold. In the $\eta$ columns, $x\pm y$ stands for $x\times10^{\pm y}$.}\label{tab:biomedical_lasso_detail}\\
\toprule
 &  & \multicolumn{6}{c|}{$\eta$} & \multicolumn{6}{c}{Time (s)} \\
\cmidrule(r){3-8}\cmidrule(l){9-14}
Problem & $a$ & GESN(A) & GESN(B) & GSSN(A) & GSSN(B) & SSNAL & ZeroFPR & GESN(A) & GESN(B) & GSSN(A) & GSSN(B) & SSNAL & ZeroFPR \\
\midrule
\endfirsthead
\multicolumn{14}{@{}l}{\normalsize\textit{Table~\thetable\ (continued)}}\\[2pt]
\toprule
 &  & \multicolumn{6}{c|}{$\eta$} & \multicolumn{6}{c}{Time (s)} \\
\cmidrule(r){3-8}\cmidrule(l){9-14}
Problem & $a$ & GESN(A) & GESN(B) & GSSN(A) & GSSN(B) & SSNAL & ZeroFPR & GESN(A) & GESN(B) & GSSN(A) & GSSN(B) & SSNAL & ZeroFPR \\
\midrule
\endhead
\multicolumn{14}{r@{}}{\textit{continued on next page}}\\
\endfoot
\bottomrule
\endlastfoot
\texttt{DLBCLH} & $10^{-2}$ & 8.5-7 & 8.5-7 & 7.6-8 & 7.7-8 & 8.4-7 & 9.6-7 & 0.23 & \textbf{0.09} & 0.50 & 0.45 & 0.50 & 6.79 \\*
 & $10^{-3}$ & 6.7-9 & 6.7-9 & 4.9-8 & 2.9-7 & 3.2-7 & \textcolor{red}{1.7-4} & \textbf{0.17} & \textbf{0.17} & 1.45 & 0.66 & 0.58 & \textcolor{red}{10.7} \\*
 & $10^{-4}$ & 9.6-7 & 9.6-7 & 9.8-7 & 5.3-7 & 3.0-7 & \textcolor{red}{4.4-4} & \textbf{0.13} & \textbf{0.13} & 7.36 & 1.15 & 0.49 & \textcolor{red}{10.2} \\
\midrule
\texttt{DLBCLN} & $10^{-2}$ & 4.6-7 & 4.6-7 & 2.8-8 & 2.5-7 & 3.0-7 & 9.8-7 & \textbf{0.20} & \textbf{0.20} & 0.94 & 0.67 & 0.66 & 23.0 \\*
 & $10^{-3}$ & 1.4-8 & 1.4-8 & 3.9-7 & 7.0-7 & 5.9-7 & \textcolor{red}{2.1-5} & \textbf{0.35} & \textbf{0.35} & 4.20 & 1.27 & 0.72 & \textcolor{red}{44.3} \\*
 & $10^{-4}$ & 3.6-7 & 3.6-7 & 4.5-7 & 9.4-7 & 4.5-7 & \textcolor{red}{7.6-5} & \textbf{0.29} & \textbf{0.29} & 19.9 & 3.69 & 0.77 & \textcolor{red}{19.5} \\
\midrule
\texttt{DLBCLS} & $10^{-2}$ & 9.8-7 & 9.8-7 & 3.6-8 & 1.1-8 & 5.7-7 & 8.5-7 & \textbf{0.11} & \textbf{0.11} & 0.32 & 0.39 & 0.45 & 2.33 \\*
 & $10^{-3}$ & 1.5-7 & 1.5-7 & 3.4-7 & 4.1-7 & 9.1-7 & \textcolor{red}{3.4-6} & \textbf{0.11} & \textbf{0.11} & 0.70 & 0.54 & 0.47 & \textcolor{red}{5.44} \\*
 & $10^{-4}$ & 6.3-7 & 6.3-7 & 9.3-7 & 9.4-7 & 7.9-7 & \textcolor{red}{1.1-4} & \textbf{0.10} & \textbf{0.10} & 1.59 & 0.57 & 0.47 & \textcolor{red}{5.35} \\
\midrule
\texttt{lungH1} & $10^{-2}$ & 1.3-8 & 1.3-8 & 1.2-7 & 2.4-8 & 9.9-7 & 8.4-7 & \textbf{0.24} & 0.25 & 1.46 & 1.28 & 0.86 & 23.3 \\*
 & $10^{-3}$ & 9.0-7 & 9.0-7 & 1.6-8 & 8.6-10 & 2.9-7 & \textcolor{red}{3.2-4} & \textbf{0.47} & 0.48 & 4.65 & 4.17 & 1.17 & \textcolor{red}{47.6} \\*
 & $10^{-4}$ & 9.0-8 & 9.0-8 & 8.6-7 & 9.1-7 & 5.8-7 & \textcolor{red}{3.4-4} & 0.79 & \textbf{0.73} & 34.6 & 14.1 & 1.43 & \textcolor{red}{54.4} \\
\midrule
\texttt{lungH2} & $10^{-2}$ & 2.1-7 & 2.1-7 & 8.0-7 & 6.6-7 & 1.9-7 & 7.7-7 & 0.20 & \textbf{0.19} & 0.67 & 0.83 & 0.63 & 13.1 \\*
 & $10^{-3}$ & 1.3-7 & 1.3-7 & 1.8-7 & 6.1-7 & 7.6-7 & \textcolor{red}{2.0-4} & \textbf{0.26} & 0.28 & 3.37 & 3.62 & 0.97 & \textcolor{red}{28.6} \\*
 & $10^{-4}$ & 6.7-7 & 6.7-7 & 9.0-7 & 8.7-7 & 8.1-7 & \textcolor{red}{1.5-4} & 0.33 & \textbf{0.30} & 24.7 & 16.7 & 1.53 & \textcolor{red}{26.5} \\
\midrule
\texttt{lungM} & $10^{-2}$ & 1.4-11 & 1.4-11 & 4.9-8 & 9.0-7 & 4.3-7 & 9.9-7 & \textbf{0.17} & \textbf{0.17} & 0.46 & 0.45 & 0.54 & 5.86 \\*
 & $10^{-3}$ & 3.6-8 & 3.6-8 & 3.4-8 & 2.4-10 & 1.6-7 & \textcolor{red}{3.6-4} & \textbf{0.22} & 0.23 & 1.77 & 1.23 & 0.59 & \textcolor{red}{12.1} \\*
 & $10^{-4}$ & 8.2-7 & 8.2-7 & 9.7-7 & 1.0-6 & 7.1-7 & \textcolor{red}{1.3-4} & \textbf{0.25} & 0.26 & 8.77 & 2.85 & 0.69 & \textcolor{red}{13.8} \\
\midrule
\texttt{lungO} & $10^{-2}$ & 5.8-7 & 5.8-7 & 7.3-7 & 3.1-12 & 5.5-7 & 8.9-7 & 0.10 & \textbf{0.09} & 0.36 & 0.37 & 0.42 & 2.87 \\*
 & $10^{-3}$ & 4.3-7 & 4.3-7 & 1.2-7 & 1.7-11 & 3.4-7 & \textcolor{red}{2.6-4} & \textbf{0.11} & \textbf{0.11} & 0.72 & 0.50 & 0.44 & \textcolor{red}{3.81} \\*
 & $10^{-4}$ & 9.9-7 & 9.9-7 & 9.1-9 & 9.4-7 & 2.6-7 & \textcolor{red}{1.1-4} & \textbf{0.12} & \textbf{0.12} & 3.20 & 0.85 & 0.42 & \textcolor{red}{3.78} \\
\midrule
\texttt{Nervous} & $10^{-2}$ & 2.9-7 & 2.9-7 & 2.0-7 & 3.5-8 & 2.5-7 & 9.7-7 & \textbf{0.09} & \textbf{0.09} & 0.39 & 0.39 & 0.44 & 5.19 \\*
\texttt{System} & $10^{-3}$ & 6.8-7 & 6.8-7 & 4.1-7 & 8.2-7 & 1.0-6 & \textcolor{red}{1.8-4} & \textbf{0.18} & 0.20 & 1.21 & 0.66 & 0.54 & \textcolor{red}{20.7} \\*
 & $10^{-4}$ & 9.4-7 & 9.4-7 & 5.3-7 & 9.2-7 & 8.5-7 & \textcolor{red}{3.0-4} & \textbf{0.18} & 0.20 & 6.07 & 1.69 & 0.58 & \textcolor{red}{20.4} \\
\midrule
\texttt{ovarianP} & $10^{-2}$ & 8.7-7 & 8.7-7 & 2.1-7 & 6.3-7 & 8.7-7 & \textcolor{red}{2.0-4} & 0.34 & \textbf{0.33} & 4.53 & 2.12 & 1.40 & \textcolor{red}{134} \\*
 & $10^{-3}$ & 4.0-7 & 4.0-7 & 9.9-7 & 7.2-7 & 2.1-7 & \textcolor{red}{7.8-4} & \textbf{0.59} & 0.60 & 12.2 & 8.24 & 1.56 & \textcolor{red}{107} \\*
 & $10^{-4}$ & 8.2-7 & 8.2-7 & 6.3-7 & 5.9-7 & 9.1-7 & \textcolor{red}{3.7-4} & 1.06 & \textbf{1.03} & 67.8 & 27.2 & 2.29 & \textcolor{red}{105} \\
\midrule
\texttt{ovarianS} & $10^{-2}$ & 1.8-8 & 1.8-8 & 5.5-7 & 2.0-8 & 3.6-7 & \textcolor{red}{--} & 5.50 & \textbf{5.44} & 283 & 112 & 15.5 & \textcolor{red}{$>$600} \\*
 & $10^{-3}$ & 3.9-8 & 3.9-8 & \textcolor{red}{--} & \textcolor{red}{--} & 8.9-7 & \textcolor{red}{--} & 8.36 & \textbf{7.94} & \textcolor{red}{$>$600} & \textcolor{red}{$>$600} & 20.2 & \textcolor{red}{$>$600} \\*
 & $10^{-4}$ & 2.5-7 & 2.5-7 & \textcolor{red}{--} & \textcolor{red}{--} & 8.5-7 & \textcolor{red}{--} & \textbf{10.5} & 10.6 & \textcolor{red}{$>$600} & \textcolor{red}{$>$600} & 23.1 & \textcolor{red}{$>$600} \\
\end{longtable}
\endgroup

\begingroup
\scriptsize
\setlength{\tabcolsep}{1.5pt}
\setlength{\aboverulesep}{0pt}\setlength{\belowrulesep}{0pt}
\renewcommand{\arraystretch}{1.0}
\setlength{\LTcapwidth}{\linewidth}
\begin{longtable}{@{}llr*{6}{c}|*{6}{c}@{}}
\caption{Same as Table~\ref{tab:biomedical_lasso_detail}, but for the fused Lasso problems with the biomedical datasets, where $\lambda_1=\alpha_1\|A^\top b\|_\infty$ and $\lambda_2=\alpha_2\lambda_1$.}\label{tab:biomedical_fused_lasso_detail}\\
\toprule
 &  &  & \multicolumn{6}{c|}{$\eta$} & \multicolumn{6}{c}{Time (s)} \\
\cmidrule(r){4-9}\cmidrule(l){10-15}
Problem & $\alpha_1$ & $\alpha_2$ & GESN(A) & GESN(B) & GSSN(A) & GSSN(B) & SSNAL & ZeroFPR & GESN(A) & GESN(B) & GSSN(A) & GSSN(B) & SSNAL & ZeroFPR \\
\midrule
\endfirsthead
\multicolumn{15}{@{}l}{\normalsize\textit{Table~\thetable\ (continued)}}\\[2pt]
\toprule
 &  &  & \multicolumn{6}{c|}{$\eta$} & \multicolumn{6}{c}{Time (s)} \\
\cmidrule(r){4-9}\cmidrule(l){10-15}
Problem & $\alpha_1$ & $\alpha_2$ & GESN(A) & GESN(B) & GSSN(A) & GSSN(B) & SSNAL & ZeroFPR & GESN(A) & GESN(B) & GSSN(A) & GSSN(B) & SSNAL & ZeroFPR \\
\midrule
\endhead
\multicolumn{15}{r@{}}{\textit{continued on next page}}\\
\endfoot
\bottomrule
\endlastfoot
\texttt{DLBCLH} & $10^{-3}$ & 10 & 8.3-9 & 8.3-9 & 3.0-7 & 1.8-9 & 3.8-7 & 9.2-7 & 0.12 & \textbf{0.10} & 1.38 & 1.70 & 0.86 & 24.4 \\*
 &  & 2 & 2.3-10 & 2.3-10 & 7.5-7 & 2.5-12 & 6.1-7 & \textcolor{red}{3.6-5} & \textbf{0.12} & \textbf{0.12} & 2.43 & 2.10 & 0.87 & \textcolor{red}{25.6} \\*
 &  & 0.02 & 9.4-7 & 9.4-7 & 3.7-8 & 9.5-7 & 3.6-7 & \textcolor{red}{1.6-4} & \textbf{0.19} & \textbf{0.19} & 2.58 & 1.86 & 0.92 & \textcolor{red}{23.1} \\*
 &  & 0.01 & 1.7-8 & 1.7-8 & 9.0-7 & 2.5-8 & 3.6-7 & \textcolor{red}{3.6-4} & 0.28 & \textbf{0.27} & 2.44 & 1.75 & 0.87 & \textcolor{red}{22.6} \\
\texttt{DLBCLH} & $10^{-4}$ & 10 & 9.8-9 & 9.8-9 & 7.6-7 & 8.4-7 & 5.7-9 & \textcolor{red}{1.6-4} & \textbf{0.14} & \textbf{0.14} & 5.40 & 4.03 & 0.95 & \textcolor{red}{24.5} \\*
 &  & 2 & 3.4-7 & 3.4-7 & 7.9-7 & 9.8-7 & 6.4-7 & \textcolor{red}{2.8-4} & 0.14 & \textbf{0.13} & 5.40 & 3.25 & 0.92 & \textcolor{red}{25.7} \\*
 &  & 0.02 & 9.0-7 & 9.0-7 & 8.6-7 & 8.3-7 & 4.8-7 & \textcolor{red}{1.4-4} & \textbf{0.25} & 0.26 & 9.42 & 2.95 & 1.00 & \textcolor{red}{23.0} \\*
 &  & 0.01 & 5.1-7 & 5.1-7 & 7.0-7 & 7.1-7 & 4.9-7 & \textcolor{red}{1.9-4} & \textbf{0.18} & \textbf{0.18} & 9.32 & 3.57 & 0.92 & \textcolor{red}{22.8} \\
\midrule
\texttt{DLBCLN} & $10^{-3}$ & 10 & 8.0-8 & 8.0-8 & 1.1-8 & 6.1-7 & 1.4-7 & 8.8-7 & \textbf{0.20} & 0.21 & 8.04 & 4.41 & 1.44 & 18.8 \\*
 &  & 2 & 1.9-8 & 1.9-8 & 2.4-7 & 1.0-8 & 3.8-7 & \textcolor{red}{2.7-6} & \textbf{0.22} & 0.23 & 10.8 & 4.71 & 1.37 & \textcolor{red}{32.4} \\*
 &  & 0.02 & 2.8-7 & 2.8-7 & 1.0-6 & 4.6-7 & 2.5-7 & \textcolor{red}{5.9-5} & \textbf{0.25} & \textbf{0.25} & 10.5 & 1.54 & 1.26 & \textcolor{red}{29.5} \\*
 &  & 0.01 & 3.3-7 & 3.3-7 & 8.8-7 & 9.8-7 & 4.9-7 & \textcolor{red}{3.4-5} & \textbf{0.35} & \textbf{0.35} & 10.9 & 4.57 & 1.21 & \textcolor{red}{30.0} \\
\texttt{DLBCLN} & $10^{-4}$ & 10 & 8.0-7 & 8.0-7 & 9.4-7 & 7.2-7 & 7.4-7 & \textcolor{red}{2.5-5} & 0.29 & \textbf{0.28} & 21.9 & 15.2 & 1.44 & \textcolor{red}{32.2} \\*
 &  & 2 & 9.6-7 & 9.6-7 & 8.8-7 & 8.9-7 & 3.6-7 & \textcolor{red}{4.3-5} & \textbf{0.25} & \textbf{0.25} & 40.2 & 12.8 & 1.44 & \textcolor{red}{32.6} \\*
 &  & 0.02 & 8.0-7 & 8.0-7 & 9.8-7 & 8.2-7 & 5.4-7 & \textcolor{red}{1.2-4} & \textbf{0.29} & 0.30 & 90.0 & 17.6 & 1.34 & \textcolor{red}{29.6} \\*
 &  & 0.01 & 8.5-7 & 8.5-7 & 8.9-7 & 6.5-7 & 6.6-7 & \textcolor{red}{5.4-5} & \textbf{0.31} & \textbf{0.31} & 31.8 & 16.5 & 1.40 & \textcolor{red}{32.8} \\
\midrule
\texttt{DLBCLS} & $10^{-3}$ & 10 & 6.6-7 & 6.6-7 & 4.6-8 & 4.8-12 & 2.6-7 & 9.4-7 & \textbf{0.06} & \textbf{0.06} & 1.02 & 0.89 & 0.80 & 5.68 \\*
 &  & 2 & 3.4-7 & 3.4-7 & 6.9-7 & 7.9-7 & 9.3-7 & 9.4-7 & \textbf{0.06} & \textbf{0.06} & 1.26 & 1.25 & 0.73 & 6.14 \\*
 &  & 0.02 & 6.1-7 & 6.1-7 & 7.5-7 & 7.6-7 & 6.3-7 & \textcolor{red}{4.7-6} & \textbf{0.13} & \textbf{0.13} & 2.13 & 1.29 & 0.78 & \textcolor{red}{17.2} \\*
 &  & 0.01 & 4.6-8 & 4.6-8 & 5.4-7 & 1.9-11 & 4.3-7 & 9.5-7 & 0.14 & \textbf{0.13} & 1.27 & 1.41 & 0.82 & 15.0 \\
\texttt{DLBCLS} & $10^{-4}$ & 10 & 1.5-7 & 1.5-7 & 4.4-7 & 9.0-7 & 3.4-7 & 8.2-7 & \textbf{0.09} & \textbf{0.09} & 2.16 & 1.64 & 0.82 & 15.9 \\*
 &  & 2 & 2.2-8 & 2.2-8 & 3.7-7 & 2.3-7 & 5.8-7 & \textcolor{red}{4.0-5} & \textbf{0.08} & \textbf{0.08} & 3.87 & 2.00 & 0.75 & \textcolor{red}{16.9} \\*
 &  & 0.02 & 7.6-7 & 7.6-7 & 7.7-7 & 9.2-7 & 6.6-7 & \textcolor{red}{5.1-5} & \textbf{0.10} & 0.11 & 4.18 & 1.79 & 0.89 & \textcolor{red}{18.8} \\*
 &  & 0.01 & 6.4-7 & 6.4-7 & 1.8-7 & 2.4-10 & 7.6-7 & \textcolor{red}{5.1-5} & 0.18 & \textbf{0.16} & 5.26 & 1.91 & 0.80 & \textcolor{red}{15.7} \\
\midrule
\texttt{lungH1} & $10^{-3}$ & 10 & 7.0-7 & 7.0-7 & 7.9-7 & 7.4-7 & 1.2-7 & 9.6-7 & \textbf{0.35} & 0.36 & 3.87 & 2.18 & 1.62 & 50.6 \\*
 &  & 2 & 9.5-7 & 9.5-7 & 7.6-8 & 8.0-7 & 5.6-7 & \textcolor{red}{9.4-5} & 0.38 & \textbf{0.37} & 6.60 & 2.76 & 1.34 & \textcolor{red}{75.8} \\*
 &  & 0.02 & 7.0-7 & 7.0-7 & 6.1-7 & 1.1-8 & 3.1-7 & \textcolor{red}{2.3-4} & 0.50 & \textbf{0.49} & 9.61 & 8.19 & 1.23 & \textcolor{red}{74.6} \\*
 &  & 0.01 & 8.9-7 & 8.9-7 & 1.9-7 & 4.9-7 & 2.2-7 & \textcolor{red}{2.9-4} & \textbf{0.59} & 0.60 & 9.45 & 7.88 & 1.07 & \textcolor{red}{73.8} \\
\texttt{lungH1} & $10^{-4}$ & 10 & 4.6-7 & 4.6-7 & 1.7-8 & 3.0-8 & 3.6-7 & \textcolor{red}{2.5-4} & 0.66 & \textbf{0.61} & 16.8 & 18.8 & 1.44 & \textcolor{red}{77.4} \\*
 &  & 2 & 6.5-7 & 6.5-7 & 4.8-7 & 9.6-7 & 9.3-8 & \textcolor{red}{3.8-4} & 0.75 & \textbf{0.73} & 44.2 & 31.7 & 1.49 & \textcolor{red}{82.7} \\*
 &  & 0.02 & 7.3-7 & 7.3-7 & 8.7-7 & 9.1-7 & 2.6-7 & \textcolor{red}{5.0-4} & 0.77 & \textbf{0.73} & 64.7 & 40.0 & 1.38 & \textcolor{red}{74.3} \\*
 &  & 0.01 & 8.2-7 & 8.2-7 & 8.6-7 & 8.6-7 & 2.6-7 & \textcolor{red}{3.9-4} & \textbf{0.91} & 0.97 & 73.7 & 47.2 & 1.51 & \textcolor{red}{81.6} \\
\midrule
\texttt{lungH2} & $10^{-3}$ & 10 & 3.6-8 & 3.6-8 & 3.6-8 & 1.2-9 & 7.2-7 & 9.5-7 & \textbf{0.20} & \textbf{0.20} & 3.35 & 2.00 & 0.78 & 14.4 \\*
 &  & 2 & 2.4-9 & 2.4-9 & 5.2-8 & 3.5-7 & 9.8-8 & 8.6-7 & \textbf{0.28} & 0.29 & 8.54 & 1.95 & 0.96 & 31.1 \\*
 &  & 0.02 & 7.6-8 & 7.6-8 & 8.1-8 & 1.0-7 & 2.9-7 & \textcolor{red}{2.0-4} & \textbf{0.34} & 0.40 & 33.4 & 8.75 & 0.84 & \textcolor{red}{51.8} \\*
 &  & 0.01 & 5.0-9 & 5.0-9 & 6.5-8 & 8.1-7 & 5.7-7 & \textcolor{red}{5.4-5} & \textbf{0.29} & 0.33 & 26.5 & 6.71 & 0.92 & \textcolor{red}{64.3} \\
\texttt{lungH2} & $10^{-4}$ & 10 & 7.1-9 & 7.1-9 & 3.7-7 & 2.9-8 & 2.1-7 & \textcolor{red}{4.9-5} & \textbf{0.38} & 0.41 & 20.4 & 4.43 & 1.11 & \textcolor{red}{54.9} \\*
 &  & 2 & 9.8-7 & 9.8-7 & 9.1-7 & 9.8-7 & 2.7-7 & \textcolor{red}{1.5-4} & 0.48 & \textbf{0.38} & 49.1 & 6.46 & 1.25 & \textcolor{red}{53.6} \\*
 &  & 0.02 & 2.4-7 & 2.4-7 & 9.8-7 & 3.0-7 & 7.1-7 & \textcolor{red}{2.2-4} & 0.36 & \textbf{0.34} & 122 & 27.3 & 0.88 & \textcolor{red}{55.4} \\*
 &  & 0.01 & 9.7-7 & 9.7-7 & 7.2-7 & 7.8-7 & 9.0-7 & \textcolor{red}{2.7-4} & \textbf{0.36} & 0.40 & 125 & 21.3 & 0.98 & \textcolor{red}{47.5} \\
\midrule
\texttt{lungM} & $10^{-3}$ & 10 & 4.0-8 & 4.0-8 & 4.1-8 & 5.3-9 & 6.3-7 & 9.7-7 & 0.12 & \textbf{0.11} & 2.27 & 0.83 & 0.60 & 14.8 \\*
 &  & 2 & 2.2-7 & 2.2-7 & 2.7-7 & 7.0-9 & 1.5-7 & 7.6-7 & \textbf{0.11} & 0.12 & 2.27 & 1.22 & 0.60 & 20.3 \\*
 &  & 0.02 & 2.3-7 & 2.3-7 & 3.9-7 & 6.9-10 & 5.1-7 & \textcolor{red}{3.8-4} & \textbf{0.48} & 0.53 & 6.29 & 1.78 & 0.63 & \textcolor{red}{25.3} \\*
 &  & 0.01 & 9.6-7 & 9.6-7 & 1.7-8 & 2.4-7 & 3.2-7 & \textcolor{red}{1.8-4} & 0.34 & \textbf{0.32} & 5.89 & 1.90 & 0.62 & \textcolor{red}{25.5} \\
\texttt{lungM} & $10^{-4}$ & 10 & 4.0-7 & 4.0-7 & 6.7-7 & 9.6-7 & 2.5-7 & \textcolor{red}{1.1-4} & 0.20 & \textbf{0.18} & 10.2 & 2.66 & 0.84 & \textcolor{red}{27.5} \\*
 &  & 2 & 1.6-8 & 1.6-8 & 5.1-7 & 7.7-7 & 3.5-7 & \textcolor{red}{2.1-4} & \textbf{0.18} & \textbf{0.18} & 10.9 & 3.93 & 0.73 & \textcolor{red}{28.3} \\*
 &  & 0.02 & 9.0-7 & 9.0-7 & 2.8-7 & 8.1-7 & 2.8-7 & \textcolor{red}{1.9-4} & 0.29 & \textbf{0.27} & 21.4 & 5.44 & 0.67 & \textcolor{red}{25.7} \\*
 &  & 0.01 & 2.0-7 & 2.0-7 & 7.6-7 & 7.3-7 & 4.2-7 & \textcolor{red}{1.9-4} & \textbf{0.40} & 0.45 & 22.0 & 5.17 & 0.71 & \textcolor{red}{25.5} \\
\midrule
\texttt{lungO} & $10^{-3}$ & 10 & 2.4-7 & 2.4-7 & 5.5-8 & 3.6-12 & 2.1-7 & 9.3-7 & \textbf{0.07} & \textbf{0.07} & 0.78 & 0.52 & 0.45 & 7.31 \\*
 &  & 2 & 3.4-8 & 3.4-8 & 5.3-8 & 5.0-7 & 2.4-8 & \textcolor{red}{4.4-5} & \textbf{0.07} & \textbf{0.07} & 1.66 & 0.65 & 0.45 & \textcolor{red}{12.2} \\*
 &  & 0.02 & 3.4-7 & 3.4-7 & 5.8-7 & 4.0-10 & 5.8-7 & \textcolor{red}{3.2-4} & 0.14 & \textbf{0.13} & 2.08 & 0.74 & 0.50 & \textcolor{red}{11.3} \\*
 &  & 0.01 & 9.9-7 & 9.9-7 & 6.6-7 & 1.8-11 & 5.7-7 & \textcolor{red}{1.3-4} & 0.11 & \textbf{0.10} & 1.72 & 0.83 & 0.50 & \textcolor{red}{11.0} \\
\texttt{lungO} & $10^{-4}$ & 10 & 9.6-7 & 9.6-7 & 1.1-9 & 6.4-7 & 7.0-7 & \textcolor{red}{1.9-4} & \textbf{0.09} & 0.10 & 4.32 & 1.20 & 0.50 & \textcolor{red}{12.0} \\*
 &  & 2 & 1.4-7 & 1.4-7 & 6.5-7 & 9.4-7 & 1.9-7 & \textcolor{red}{1.5-4} & \textbf{0.08} & \textbf{0.08} & 7.07 & 1.30 & 0.51 & \textcolor{red}{12.1} \\*
 &  & 0.02 & 5.7-7 & 5.7-7 & 1.6-7 & 2.5-7 & 5.7-7 & \textcolor{red}{1.5-4} & \textbf{0.12} & 0.14 & 5.76 & 1.58 & 0.53 & \textcolor{red}{11.3} \\*
 &  & 0.01 & 6.3-7 & 6.3-7 & 8.1-7 & 3.6-7 & 5.9-7 & \textcolor{red}{1.6-4} & 0.14 & \textbf{0.13} & 7.31 & 1.39 & 0.50 & \textcolor{red}{11.2} \\
\midrule
\texttt{Nervous} & $10^{-3}$ & 10 & 4.2-8 & 4.2-8 & 9.4-9 & 2.1-8 & 1.8-7 & \textcolor{red}{2.1-6} & 0.10 & \textbf{0.09} & 2.14 & 0.94 & 0.59 & \textcolor{red}{24.9} \\*
\texttt{System} &  & 2 & 1.8-8 & 1.8-8 & 3.3-7 & 1.5-7 & 1.7-7 & 9.8-7 & \textbf{0.10} & 0.12 & 2.85 & 1.11 & 0.62 & 24.7 \\*
 &  & 0.02 & 8.7-12 & 8.7-12 & 3.6-8 & 3.0-8 & 3.5-8 & \textcolor{red}{7.9-5} & \textbf{0.21} & \textbf{0.21} & 3.01 & 1.19 & 0.58 & \textcolor{red}{22.8} \\*
 &  & 0.01 & 1.7-7 & 1.7-7 & 4.4-7 & 6.0-9 & 6.1-8 & \textcolor{red}{8.4-5} & 0.17 & \textbf{0.16} & 2.83 & 1.12 & 0.55 & \textcolor{red}{22.5} \\
\texttt{Nervous} & $10^{-4}$ & 10 & 3.2-8 & 3.2-8 & 1.5-8 & 4.0-9 & 5.9-7 & \textcolor{red}{2.5-4} & \textbf{0.12} & 0.13 & 5.42 & 1.72 & 0.62 & \textcolor{red}{25.2} \\*
\texttt{System} &  & 2 & 8.0-7 & 8.0-7 & 5.6-7 & 9.7-7 & 6.5-7 & \textcolor{red}{1.7-4} & \textbf{0.14} & 0.15 & 8.76 & 1.82 & 0.67 & \textcolor{red}{26.1} \\*
 &  & 0.02 & 8.5-7 & 8.5-7 & 8.9-7 & 9.2-7 & 1.4-7 & \textcolor{red}{1.6-4} & 0.26 & \textbf{0.25} & 15.3 & 2.85 & 0.62 & \textcolor{red}{23.0} \\*
 &  & 0.01 & 2.3-7 & 2.3-7 & 9.6-7 & 5.0-7 & 1.5-7 & \textcolor{red}{1.6-4} & \textbf{0.23} & 0.24 & 14.3 & 2.67 & 0.57 & \textcolor{red}{22.8} \\
\midrule
\texttt{ovarianP} & $10^{-3}$ & 10 & 3.6-8 & 3.6-8 & 8.4-7 & 4.3-7 & 6.6-7 & 9.1-7 & 0.45 & \textbf{0.37} & 35.7 & 8.91 & 1.24 & 110 \\*
 &  & 2 & 5.9-7 & 5.9-7 & 3.7-7 & 8.8-7 & 1.6-7 & \textcolor{red}{4.5-5} & \textbf{0.39} & 0.41 & 40.6 & 11.6 & 1.34 & \textcolor{red}{125} \\*
 &  & 0.02 & 4.4-8 & 4.4-8 & 6.0-8 & 2.8-7 & 4.7-7 & \textcolor{red}{4.6-4} & \textbf{0.67} & 0.71 & 84.4 & 26.8 & 1.24 & \textcolor{red}{120} \\*
 &  & 0.01 & 1.6-7 & 1.6-7 & 5.6-8 & 2.7-7 & 9.6-7 & \textcolor{red}{5.7-4} & 0.88 & \textbf{0.85} & 85.9 & 29.2 & 1.31 & \textcolor{red}{118} \\
\texttt{ovarianP} & $10^{-4}$ & 10 & 6.7-8 & 6.7-8 & 9.6-7 & 7.8-7 & 1.3-7 & \textcolor{red}{4.5-4} & \textbf{0.51} & 0.52 & 160 & 42.6 & 1.46 & \textcolor{red}{121} \\*
 &  & 2 & 1.3-7 & 1.3-7 & 8.9-7 & 6.1-7 & 2.7-7 & \textcolor{red}{3.6-4} & \textbf{0.96} & 0.97 & 144 & 57.2 & 1.63 & \textcolor{red}{122} \\*
 &  & 0.02 & 4.5-7 & 4.5-7 & 9.4-7 & 8.0-7 & 2.2-7 & \textcolor{red}{4.4-4} & \textbf{1.10} & 1.12 & 199 & 93.1 & 1.98 & \textcolor{red}{117} \\*
 &  & 0.01 & 6.1-7 & 6.1-7 & 5.8-7 & 9.8-7 & 5.6-7 & \textcolor{red}{4.6-4} & 1.35 & \textbf{1.23} & 179 & 120 & 1.94 & \textcolor{red}{115} \\
\midrule
\texttt{ovarianS} & $10^{-3}$ & 10 & 1.5-9 & 1.5-9 & \textcolor{red}{--} & 9.6-9 & 1.2-7 & \textcolor{red}{--} & \textbf{8.06} & 8.30 & \textcolor{red}{$>$600} & 368 & 15.1 & \textcolor{red}{$>$600} \\*
 &  & 2 & 2.4-7 & 2.4-7 & \textcolor{red}{--} & \textcolor{red}{--} & 8.9-8 & \textcolor{red}{--} & 13.0 & \textbf{12.7} & \textcolor{red}{$>$600} & \textcolor{red}{$>$600} & 19.2 & \textcolor{red}{$>$600} \\*
 &  & 0.02 & 8.1-7 & 8.1-7 & \textcolor{red}{--} & \textcolor{red}{--} & 7.1-7 & \textcolor{red}{--} & 13.6 & \textbf{13.5} & \textcolor{red}{$>$600} & \textcolor{red}{$>$600} & 23.9 & \textcolor{red}{$>$600} \\*
 &  & 0.01 & 5.0-8 & 5.0-8 & \textcolor{red}{--} & \textcolor{red}{--} & 8.2-7 & \textcolor{red}{--} & 17.2 & \textbf{16.5} & \textcolor{red}{$>$600} & \textcolor{red}{$>$600} & 22.8 & \textcolor{red}{$>$600} \\
\texttt{ovarianS} & $10^{-4}$ & 10 & 8.8-7 & 8.8-7 & \textcolor{red}{--} & \textcolor{red}{--} & 5.7-7 & \textcolor{red}{--} & 12.3 & \textbf{11.9} & \textcolor{red}{$>$600} & \textcolor{red}{$>$600} & 29.8 & \textcolor{red}{$>$600} \\*
 &  & 2 & 2.7-7 & 2.7-7 & \textcolor{red}{--} & \textcolor{red}{--} & 2.8-7 & \textcolor{red}{--} & \textbf{15.5} & 15.7 & \textcolor{red}{$>$600} & \textcolor{red}{$>$600} & 29.2 & \textcolor{red}{$>$600} \\*
 &  & 0.02 & 2.5-7 & 2.5-7 & \textcolor{red}{--} & \textcolor{red}{--} & 8.9-7 & \textcolor{red}{--} & 15.0 & \textbf{14.2} & \textcolor{red}{$>$600} & \textcolor{red}{$>$600} & 26.7 & \textcolor{red}{$>$600} \\*
 &  & 0.01 & 5.4-7 & 5.4-7 & \textcolor{red}{--} & \textcolor{red}{--} & 7.6-7 & \textcolor{red}{--} & \textbf{15.1} & \textbf{15.1} & \textcolor{red}{$>$600} & \textcolor{red}{$>$600} & 25.4 & \textcolor{red}{$>$600} \\
\end{longtable}
\endgroup

\begingroup
\scriptsize
\setlength{\tabcolsep}{1.5pt}
\setlength{\aboverulesep}{0pt}\setlength{\belowrulesep}{0pt}
\renewcommand{\arraystretch}{1.0}
\setlength{\LTcapwidth}{\linewidth}
\begin{longtable}{@{}ll*{6}{c}|*{6}{c}@{}}
\caption{Same as Table~\ref{tab:biomedical_lasso_detail}, but for the Lasso problems with the UCI/LIBSVM datasets, where $\lambda=a\|A^\top b\|_\infty$; the time limit is 1800 seconds, so ``$>$1800'' marks a run that reached it. $^\dagger$GSSN(A) stopped with a step-size failure and returned no iterate.}\label{tab:libsvm_lasso_detail}\\
\toprule
 &  & \multicolumn{6}{c|}{$\eta$} & \multicolumn{6}{c}{Time (s)} \\
\cmidrule(r){3-8}\cmidrule(l){9-14}
Problem & $a$ & GESN(A) & GESN(B) & GSSN(A) & GSSN(B) & SSNAL & ZeroFPR & GESN(A) & GESN(B) & GSSN(A) & GSSN(B) & SSNAL & ZeroFPR \\
\midrule
\endfirsthead
\multicolumn{14}{@{}l}{\normalsize\textit{Table~\thetable\ (continued)}}\\[2pt]
\toprule
 &  & \multicolumn{6}{c|}{$\eta$} & \multicolumn{6}{c}{Time (s)} \\
\cmidrule(r){3-8}\cmidrule(l){9-14}
Problem & $a$ & GESN(A) & GESN(B) & GSSN(A) & GSSN(B) & SSNAL & ZeroFPR & GESN(A) & GESN(B) & GSSN(A) & GSSN(B) & SSNAL & ZeroFPR \\
\midrule
\endhead
\multicolumn{14}{r@{}}{\textit{continued on next page}}\\
\endfoot
\bottomrule
\endlastfoot
\texttt{E2006.} & $10^{-2}$ & 8.5-8 & 8.5-8 & 2.3-10 & 2.3-10 & 4.2-9 & 9.9-14 & 0.24 & \textbf{0.19} & 1.08 & 1.07 & 0.92 & 1.32 \\*
\texttt{train} & $10^{-3}$ & 5.1-8 & 5.1-8 & 2.3-10 & 2.3-10 & 5.0-7 & 1.8-13 & \textbf{0.20} & \textbf{0.20} & 1.09 & 1.03 & 0.98 & 1.41 \\*
 & $10^{-4}$ & 3.5-10 & 3.5-10 & 7.8-13 & 3.3-12 & 1.6-7 & 5.2-8 & 0.29 & \textbf{0.25} & 1.29 & 1.58 & 1.04 & 1.79 \\
\midrule
\texttt{E2006.test} & $10^{-2}$ & 1.6-14 & 1.6-14 & 1.2-10 & 1.2-10 & 1.7-9 & 1.6-14 & 0.10 & \textbf{0.09} & 0.52 & 0.39 & 0.55 & 0.67 \\*
 & $10^{-3}$ & 1.6-7 & 1.6-7 & 1.2-10 & 1.2-10 & 3.4-9 & 8.0-14 & \textbf{0.07} & \textbf{0.07} & 0.51 & 0.39 & 0.59 & 0.69 \\*
 & $10^{-4}$ & 2.4-10 & 2.4-10 & 2.0-13 & 1.6-12 & 3.4-7 & 4.1-9 & \textbf{0.09} & \textbf{0.09} & 0.74 & 0.55 & 0.60 & 0.90 \\
\midrule
\texttt{log1p.} & $10^{-2}$ & 8.2-7 & 8.2-7 & 8.9-12 & 7.1-10 & 7.1-7 & 6.1-7 & 2.03 & \textbf{1.90} & 14.5 & 11.5 & 5.10 & 108 \\*
\texttt{E2006.test} & $10^{-3}$ & 2.9-11 & 3.0-11 & 6.9-10 & 1.1-12 & 1.7-7 & 9.4-7 & 4.47 & \textbf{4.45} & 22.5 & 17.5 & 7.29 & 517 \\*
 & $10^{-4}$ & 1.7-11 & 3.2-11 & 6.9-7 & 9.6-7 & 1.4-7 & \textcolor{red}{--} & 19.0 & 17.6 & 141 & 249 & \textbf{14.1} & \textcolor{red}{$>$1800} \\
\midrule
\texttt{pyrim5} & $10^{-2}$ & 1.9-10 & 1.9-10 & 1.4-7 & 8.7-11 & 9.4-8 & \textcolor{red}{4.9-3} & \textbf{2.31} & 2.36 & 13.8 & 7.16 & 2.40 & \textcolor{red}{514} \\*
 & $10^{-3}$ & 8.2-7 & 8.2-7 & 1.1-7 & 4.5-7 & 5.4-9 & \textcolor{red}{7.5-3} & 3.27 & 3.32 & 53.4 & 27.3 & \textbf{3.09} & \textcolor{red}{523} \\*
 & $10^{-4}$ & 2.4-8 & 2.4-8 & 8.1-7 & 6.2-7 & 1.6-7 & \textcolor{red}{1.2-2} & 3.90 & 3.66 & 366 & 135 & \textbf{3.57} & \textcolor{red}{502} \\
\midrule
\texttt{triazines4} & $10^{-2}$ & 8.7-11 & 8.7-11 & 3.2-7 & 4.3-7 & 1.0-7 & \textcolor{red}{--} & 25.0 & 25.2 & 134 & 69.0 & \textbf{13.3} & \textcolor{red}{$>$1800} \\*
 & $10^{-3}$ & 1.4-7 & 1.4-7 & 6.8-7 & 8.6-7 & 6.7-7 & \textcolor{red}{--} & 28.9 & 28.9 & 864 & 969 & \textbf{23.1} & \textcolor{red}{$>$1800} \\*
 & $10^{-4}$ & 4.5-8 & 4.5-8 & \textcolor{red}{--} & \textcolor{red}{--} & 7.4-7 & \textcolor{red}{--} & 25.1 & \textbf{24.9} & \textcolor{red}{$>$1800} & \textcolor{red}{$>$1800} & 47.5 & \textcolor{red}{$>$1800} \\
\midrule
\texttt{abalone7} & $10^{-2}$ & 4.6-10 & 4.6-10 & 7.4-14 & 2.3-7 & 1.9-7 & 5.9-7 & \textbf{0.72} & 0.74 & 2.91 & 2.42 & 1.06 & 18.0 \\*
 & $10^{-3}$ & 3.3-7 & 3.3-7 & 9.1-8 & 6.2-9 & 7.7-7 & 7.9-7 & 1.95 & 1.99 & 9.19 & 6.75 & \textbf{1.57} & 383 \\*
 & $10^{-4}$ & 5.6-8 & 5.6-8 & 6.2-7 & 7.7-9 & 3.6-7 & \textcolor{red}{1.7-3} & 3.19 & 3.14 & 28.6 & 19.2 & \textbf{2.64} & \textcolor{red}{500} \\
\midrule
\texttt{bodyfat7} & $10^{-2}$ & 4.0-9 & 4.0-9 & \textcolor{red}{$\dagger$} & 4.1-8 & 2.5-8 & 3.4-7 & \textbf{0.88} & 0.89 & \textcolor{red}{9.66} & 2.23 & 1.55 & 18.9 \\*
 & $10^{-3}$ & 3.5-12 & 3.5-12 & \textcolor{red}{$\dagger$} & 1.3-8 & 1.6-7 & 1.2-7 & 1.11 & \textbf{1.06} & \textcolor{red}{16.2} & 4.07 & 1.91 & 48.4 \\*
 & $10^{-4}$ & 6.2-7 & 6.2-7 & 1.8-7 & 1.4-15 & 8.5-8 & 6.9-7 & 2.03 & \textbf{2.00} & 20.9 & 11.4 & 3.22 & 122 \\
\midrule
\texttt{housing7} & $10^{-2}$ & 1.2-13 & 1.2-13 & 2.2-7 & 1.3-7 & 1.0-8 & 8.4-7 & 2.03 & \textbf{2.01} & 9.27 & 5.05 & 2.50 & 506 \\*
 & $10^{-3}$ & 1.4-9 & 1.4-9 & 2.6-7 & 3.7-11 & 9.8-7 & \textcolor{red}{9.2-3} & 4.11 & 4.09 & 33.2 & 14.1 & \textbf{3.45} & \textcolor{red}{941} \\*
 & $10^{-4}$ & 3.6-7 & 3.6-7 & 9.7-7 & 9.8-7 & 5.3-7 & \textcolor{red}{1.1-2} & 7.21 & 6.86 & 157 & 137 & \textbf{6.71} & \textcolor{red}{934} \\
\midrule
\texttt{mpg7} & $10^{-2}$ & 4.7-8 & 4.7-8 & 7.5-8 & 8.7-12 & 2.8-8 & 9.1-7 & \textbf{0.06} & \textbf{0.06} & 0.28 & 0.26 & 0.35 & 1.14 \\*
 & $10^{-3}$ & 1.6-9 & 1.6-9 & 1.3-7 & 3.2-7 & 6.6-8 & 9.8-7 & \textbf{0.11} & \textbf{0.11} & 0.50 & 0.32 & 0.45 & 3.46 \\*
 & $10^{-4}$ & 7.1-7 & 7.1-7 & 1.2-7 & 4.8-11 & 3.4-7 & 8.7-7 & \textbf{0.24} & 0.25 & 1.13 & 0.64 & 0.59 & 11.0 \\
\midrule
\texttt{space\_ga9} & $10^{-2}$ & 4.2-13 & 4.2-13 & 4.8-9 & 1.2-8 & 8.3-8 & 2.8-7 & \textbf{0.18} & \textbf{0.18} & 0.68 & 0.59 & 0.58 & 1.86 \\*
 & $10^{-3}$ & 3.4-8 & 3.4-8 & 5.9-11 & 2.4-11 & 9.0-7 & 9.7-7 & \textbf{0.29} & 0.31 & 1.14 & 0.99 & 0.74 & 8.47 \\*
 & $10^{-4}$ & 1.4-7 & 1.4-7 & 7.6-8 & 1.6-10 & 3.0-7 & 8.1-7 & 1.01 & \textbf{0.98} & 2.99 & 1.93 & 1.17 & 39.5 \\
\end{longtable}
\endgroup

\begingroup
\scriptsize
\setlength{\tabcolsep}{1.5pt}
\setlength{\aboverulesep}{0pt}\setlength{\belowrulesep}{0pt}
\renewcommand{\arraystretch}{1.0}
\setlength{\LTcapwidth}{\linewidth}
\begin{longtable}{@{}llr*{6}{c}|*{6}{c}@{}}
\caption{Same as Table~\ref{tab:biomedical_lasso_detail}, but for the fused Lasso problems with the UCI/LIBSVM datasets, where $\lambda_1=\alpha_1\|A^\top b\|_\infty$ and $\lambda_2=\alpha_2\lambda_1$; the time limit is 1800 seconds, so ``$>$1800'' marks a run that reached it. For GESN(B), the iteration limit includes its cap of 800 CG iterations per inner solve.}\label{tab:libsvm_fused_lasso_detail}\\
\toprule
 &  &  & \multicolumn{6}{c|}{$\eta$} & \multicolumn{6}{c}{Time (s)} \\
\cmidrule(r){4-9}\cmidrule(l){10-15}
Problem & $\alpha_1$ & $\alpha_2$ & GESN(A) & GESN(B) & GSSN(A) & GSSN(B) & SSNAL & ZeroFPR & GESN(A) & GESN(B) & GSSN(A) & GSSN(B) & SSNAL & ZeroFPR \\
\midrule
\endfirsthead
\multicolumn{15}{@{}l}{\normalsize\textit{Table~\thetable\ (continued)}}\\[2pt]
\toprule
 &  &  & \multicolumn{6}{c|}{$\eta$} & \multicolumn{6}{c}{Time (s)} \\
\cmidrule(r){4-9}\cmidrule(l){10-15}
Problem & $\alpha_1$ & $\alpha_2$ & GESN(A) & GESN(B) & GSSN(A) & GSSN(B) & SSNAL & ZeroFPR & GESN(A) & GESN(B) & GSSN(A) & GSSN(B) & SSNAL & ZeroFPR \\
\midrule
\endhead
\multicolumn{15}{r@{}}{\textit{continued on next page}}\\
\endfoot
\bottomrule
\endlastfoot
\texttt{E2006.} & $10^{-3}$ & 1 & 1.9-9 & 1.9-9 & 5.9-11 & 5.9-11 & 3.1-7 & 9.3-13 & 0.23 & \textbf{0.22} & 1.05 & 1.00 & 1.08 & 1.33 \\*
\texttt{train} &  & 0.5 & 5.9-9 & 5.9-9 & 6.0-11 & 6.0-11 & 9.7-8 & 1.3-13 & \textbf{0.21} & \textbf{0.21} & 1.12 & 0.91 & 0.99 & 1.23 \\*
 &  & 0.1 & 9.9-8 & 9.9-8 & 5.9-11 & 5.9-11 & 4.2-7 & 1.8-13 & \textbf{0.21} & 0.22 & 1.05 & 0.90 & 1.26 & 1.41 \\*
 &  & 0.01 & 6.6-8 & 6.6-8 & 5.9-11 & 5.9-11 & 4.9-7 & 4.0-13 & \textbf{0.21} & \textbf{0.21} & 1.03 & 0.90 & 1.21 & 1.41 \\
\texttt{E2006.} & $10^{-4}$ & 1 & 2.0-10 & 2.0-10 & 4.0-13 & 2.9-12 & 5.2-10 & 3.5-8 & 0.32 & \textbf{0.31} & 1.27 & 0.92 & 1.27 & 1.34 \\*
\texttt{train} &  & 0.5 & 2.7-10 & 2.7-10 & 1.9-13 & 2.7-12 & 1.2-8 & 4.3-8 & 0.30 & \textbf{0.29} & 1.54 & 1.02 & 1.50 & 1.66 \\*
 &  & 0.1 & 3.4-10 & 3.4-10 & 6.7-13 & 1.5-12 & 3.6-8 & 2.0-7 & \textbf{0.28} & 0.30 & 1.94 & 1.15 & 1.31 & 1.55 \\*
 &  & 0.01 & 3.5-10 & 3.5-10 & 1.0-12 & 7.7-12 & 3.3-8 & 2.8-9 & \textbf{0.29} & 0.30 & 1.73 & 1.14 & 1.30 & 1.68 \\
\midrule
\texttt{E2006.test} & $10^{-3}$ & 1 & 1.7-7 & 1.7-7 & 3.1-11 & 3.1-11 & 1.9-9 & 2.5-13 & \textbf{0.09} & \textbf{0.09} & 0.66 & 0.48 & 0.88 & 0.78 \\*
 &  & 0.5 & 1.7-7 & 1.7-7 & 3.1-11 & 3.1-11 & 2.7-9 & 3.1-13 & \textbf{0.09} & \textbf{0.09} & 0.59 & 0.48 & 0.76 & 0.71 \\*
 &  & 0.1 & 1.5-7 & 1.5-7 & 3.1-11 & 3.1-11 & 3.3-9 & 5.3-13 & \textbf{0.09} & \textbf{0.09} & 0.69 & 0.48 & 0.88 & 0.93 \\*
 &  & 0.01 & 1.7-7 & 1.7-7 & 3.1-11 & 3.1-11 & 3.4-9 & 2.4-13 & \textbf{0.08} & 0.09 & 0.72 & 0.47 & 0.86 & 0.72 \\
\texttt{E2006.test} & $10^{-4}$ & 1 & 2.7-10 & 2.7-10 & 8.2-14 & 3.9-12 & 1.3-7 & 1.7-8 & \textbf{0.13} & \textbf{0.13} & 0.83 & 0.63 & 0.80 & 0.70 \\*
 &  & 0.5 & 2.7-10 & 2.7-10 & 1.6-13 & 2.5-12 & 1.0-7 & 1.6-8 & \textbf{0.13} & \textbf{0.13} & 0.86 & 0.63 & 0.72 & 0.96 \\*
 &  & 0.1 & 2.5-10 & 2.5-10 & 4.7-13 & 2.6-12 & 7.4-8 & 1.1-8 & \textbf{0.12} & \textbf{0.12} & 0.82 & 0.70 & 0.83 & 0.86 \\*
 &  & 0.01 & 2.4-10 & 2.4-10 & 1.2-12 & 8.6-12 & 6.8-8 & 1.4-8 & \textbf{0.13} & 0.14 & 0.82 & 0.65 & 0.77 & 0.87 \\
\midrule
\texttt{log1p.} & $10^{-3}$ & 1 & 6.5-12 & 6.4-12 & 1.9-13 & 1.4-10 & 3.7-7 & 5.3-7 & 3.97 & \textbf{3.93} & 63.3 & 47.7 & 12.1 & 298 \\*
\texttt{E2006.test} &  & 0.5 & 1.8-10 & 1.8-10 & 1.2-7 & 1.8-13 & 3.8-7 & 9.7-7 & 4.70 & \textbf{4.16} & 55.7 & 36.3 & 11.9 & 267 \\*
 &  & 0.1 & 2.3-10 & 2.3-8 & 6.1-9 & 1.1-7 & 2.5-7 & 9.3-7 & 7.78 & \textbf{5.92} & 50.2 & 35.4 & 14.2 & 803 \\*
 &  & 0.01 & 5.9-9 & 5.9-9 & 2.0-7 & 1.6-9 & 2.3-7 & 8.7-7 & \textbf{10.6} & 11.6 & 70.0 & 41.2 & 14.9 & 1044 \\
\texttt{log1p.} & $10^{-4}$ & 1 & 2.0-8 & \textcolor{red}{5.2+0} & 7.6-8 & 7.5-7 & 4.4-7 & \textcolor{red}{--} & \textbf{16.1} & \textcolor{red}{11.6} & 183 & 194 & 30.0 & \textcolor{red}{$>$1800} \\*
\texttt{E2006.test} &  & 0.5 & 3.1-9 & \textcolor{red}{2.6+1} & 6.0-7 & 9.1-7 & 2.1-7 & \textcolor{red}{--} & \textbf{17.2} & \textcolor{red}{10.1} & 184 & 404 & 31.9 & \textcolor{red}{$>$1800} \\*
 &  & 0.1 & 3.7-11 & 5.3-11 & 1.8-8 & 8.0-7 & 6.5-7 & \textcolor{red}{--} & 32.3 & 31.7 & 570 & 740 & \textbf{28.1} & \textcolor{red}{$>$1800} \\*
 &  & 0.01 & 4.2-11 & 5.0-12 & 1.1-8 & 9.5-7 & 6.1-7 & \textcolor{red}{--} & 50.6 & 53.3 & 336 & 900 & \textbf{25.4} & \textcolor{red}{$>$1800} \\
\midrule
\texttt{pyrim5} & $10^{-3}$ & 1 & 5.9-10 & 5.9-10 & 7.7-7 & 2.0-8 & 2.4-7 & \textcolor{red}{4.0-3} & 5.48 & 5.82 & 116 & 36.2 & \textbf{4.57} & \textcolor{red}{753} \\*
 &  & 0.5 & 7.5-9 & 7.4-9 & 2.2-7 & 4.1-7 & 3.2-7 & \textcolor{red}{7.7-3} & 4.78 & 4.47 & 78.0 & 36.6 & \textbf{3.17} & \textcolor{red}{702} \\*
 &  & 0.1 & 2.0-8 & 2.0-8 & 9.4-8 & 9.1-10 & 5.7-7 & \textcolor{red}{9.9-3} & \textbf{3.96} & 3.97 & 262 & 55.6 & 4.21 & \textcolor{red}{729} \\*
 &  & 0.01 & 9.4-10 & 9.4-10 & 5.0-7 & 4.3-8 & 1.2-9 & \textcolor{red}{5.2-3} & 4.53 & 4.55 & 231 & 73.1 & \textbf{3.49} & \textcolor{red}{677} \\
\texttt{pyrim5} & $10^{-4}$ & 1 & 5.6-8 & 5.6-8 & 7.7-7 & 8.0-7 & 5.6-7 & \textcolor{red}{9.3-3} & \textbf{5.69} & 6.61 & 476 & 220 & 6.01 & \textcolor{red}{678} \\*
 &  & 0.5 & 1.1-9 & 1.1-9 & 7.2-7 & 1.7-7 & 5.1-7 & \textcolor{red}{6.0-3} & 8.35 & 8.05 & 462 & 196 & \textbf{4.59} & \textcolor{red}{706} \\*
 &  & 0.1 & 7.1-7 & 7.1-7 & 3.4-7 & 2.3-7 & 3.3-7 & \textcolor{red}{7.0-3} & 8.24 & 8.02 & 929 & 314 & \textbf{5.08} & \textcolor{red}{658} \\*
 &  & 0.01 & 4.5-8 & 4.5-8 & 7.3-7 & 8.7-7 & 4.7-8 & \textcolor{red}{7.9-3} & 9.30 & 9.01 & 1100 & 443 & \textbf{4.82} & \textcolor{red}{618} \\
\midrule
\texttt{triazines4} & $10^{-3}$ & 1 & 1.0-11 & 1.0-11 & \textcolor{red}{--} & 4.8-8 & 8.3-8 & \textcolor{red}{--} & 51.5 & 47.3 & \textcolor{red}{$>$1800} & 482 & \textbf{16.8} & \textcolor{red}{$>$1800} \\*
 &  & 0.5 & 3.9-9 & 3.9-9 & \textcolor{red}{--} & 2.9-7 & 3.5-7 & \textcolor{red}{--} & 36.0 & 34.5 & \textcolor{red}{$>$1800} & 994 & \textbf{18.0} & \textcolor{red}{$>$1800} \\*
 &  & 0.1 & 1.3-10 & \textcolor{red}{1.8-6} & \textcolor{red}{--} & 4.1-7 & 4.6-8 & \textcolor{red}{--} & 47.6 & \textcolor{red}{44.0} & \textcolor{red}{$>$1800} & 1192 & \textbf{20.4} & \textcolor{red}{$>$1800} \\*
 &  & 0.01 & 1.3-10 & 1.3-10 & \textcolor{red}{--} & \textcolor{red}{--} & 1.7-7 & \textcolor{red}{--} & 38.5 & 38.9 & \textcolor{red}{$>$1800} & \textcolor{red}{$>$1800} & \textbf{21.0} & \textcolor{red}{$>$1800} \\
\texttt{triazines4} & $10^{-4}$ & 1 & 3.9-8 & 3.9-8 & \textcolor{red}{--} & \textcolor{red}{--} & 1.4-7 & \textcolor{red}{--} & 32.6 & 32.6 & \textcolor{red}{$>$1800} & \textcolor{red}{$>$1800} & \textbf{24.3} & \textcolor{red}{$>$1800} \\*
 &  & 0.5 & 2.4-7 & 2.4-7 & \textcolor{red}{--} & \textcolor{red}{--} & 1.6-7 & \textcolor{red}{--} & 32.8 & 32.9 & \textcolor{red}{$>$1800} & \textcolor{red}{$>$1800} & \textbf{25.6} & \textcolor{red}{$>$1800} \\*
 &  & 0.1 & 7.7-7 & 7.7-7 & \textcolor{red}{--} & \textcolor{red}{--} & 7.6-8 & \textcolor{red}{--} & 36.1 & 36.1 & \textcolor{red}{$>$1800} & \textcolor{red}{$>$1800} & \textbf{27.9} & \textcolor{red}{$>$1800} \\*
 &  & 0.01 & 7.8-10 & 7.8-10 & \textcolor{red}{--} & \textcolor{red}{--} & 1.5-7 & \textcolor{red}{--} & 41.1 & 40.8 & \textcolor{red}{$>$1800} & \textcolor{red}{$>$1800} & \textbf{27.8} & \textcolor{red}{$>$1800} \\
\midrule
\texttt{abalone7} & $10^{-3}$ & 1 & 5.0-7 & 5.0-7 & 2.8-8 & 4.9-7 & 9.6-7 & 9.8-7 & 1.39 & 1.38 & 12.1 & 8.08 & \textbf{1.28} & 65.7 \\*
 &  & 0.5 & 8.0-8 & 8.0-8 & 2.6-9 & 1.2-8 & 3.9-8 & 9.5-7 & 1.46 & 1.42 & 12.7 & 5.82 & \textbf{1.30} & 82.7 \\*
 &  & 0.1 & 7.7-7 & 7.7-7 & 3.8-9 & 8.7-8 & 3.5-7 & 9.8-7 & 2.12 & 2.10 & 21.5 & 11.2 & \textbf{1.60} & 204 \\*
 &  & 0.01 & 2.4-11 & 2.4-11 & 1.2-7 & 3.7-8 & 8.1-8 & 9.2-7 & 1.99 & 1.96 & 23.1 & 11.9 & \textbf{1.58} & 258 \\
\texttt{abalone7} & $10^{-4}$ & 1 & 7.3-7 & 7.3-7 & 1.3-8 & 3.5-9 & 2.1-7 & 9.6-7 & \textbf{2.31} & 2.33 & 37.7 & 18.0 & 2.45 & 352 \\*
 &  & 0.5 & 2.6-7 & 2.6-7 & 5.1-7 & 1.4-8 & 2.6-7 & 9.9-7 & 2.48 & 2.50 & 36.3 & 20.2 & \textbf{2.44} & 395 \\*
 &  & 0.1 & 5.2-8 & 5.2-8 & 7.3-8 & 1.9-7 & 5.0-7 & \textcolor{red}{4.2-4} & 3.59 & 3.56 & 62.6 & 30.9 & \textbf{3.22} & \textcolor{red}{477} \\*
 &  & 0.01 & 6.4-7 & 6.4-7 & 3.1-7 & 6.5-8 & 5.6-8 & \textcolor{red}{4.0-4} & 3.86 & 3.78 & 74.5 & 28.0 & \textbf{3.66} & \textcolor{red}{474} \\
\midrule
\texttt{bodyfat7} & $10^{-3}$ & 1 & 9.9-13 & 9.9-13 & 3.2-14 & 6.1-12 & 6.1-8 & 7.0-7 & \textbf{1.06} & 1.07 & 6.39 & 6.88 & 1.60 & 45.3 \\*
 &  & 0.5 & 2.5-12 & 2.5-12 & 3.6-7 & 6.8-15 & 6.2-7 & 6.5-7 & 1.03 & \textbf{1.02} & 7.27 & 7.08 & 1.54 & 43.3 \\*
 &  & 0.1 & 4.5-12 & 4.5-12 & 3.2-14 & 1.1-7 & 8.4-8 & 1.2-7 & \textbf{1.50} & 1.51 & 10.2 & 7.95 & 1.74 & 51.2 \\*
 &  & 0.01 & 7.0-13 & 7.0-13 & 2.2-7 & 2.1-7 & 1.1-7 & 4.5-7 & 1.35 & \textbf{1.29} & 16.1 & 7.67 & 1.75 & 52.8 \\
\texttt{bodyfat7} & $10^{-4}$ & 1 & 7.0-13 & 7.0-13 & 2.0-14 & 1.4-10 & 1.7-7 & 8.5-7 & \textbf{1.53} & 1.54 & 18.0 & 9.94 & 2.19 & 75.6 \\*
 &  & 0.5 & 6.0-9 & 6.0-9 & 3.9-14 & 3.2-14 & 2.9-7 & 4.6-7 & 1.96 & \textbf{1.92} & 23.5 & 11.7 & 2.09 & 95.2 \\*
 &  & 0.1 & 1.2-8 & 1.2-8 & 8.1-9 & 2.5-13 & 1.2-8 & 8.1-7 & \textbf{2.38} & 2.39 & 31.9 & 14.3 & 2.69 & 126 \\*
 &  & 0.01 & 7.5-12 & 7.5-12 & 1.7-9 & 3.4-13 & 9.1-7 & 9.8-7 & \textbf{2.16} & \textbf{2.16} & 29.8 & 17.0 & 2.56 & 125 \\
\midrule
\texttt{housing7} & $10^{-3}$ & 1 & 7.6-7 & 7.6-7 & 8.3-7 & 3.5-8 & 5.9-7 & 1.0-6 & 3.40 & 3.42 & 53.1 & 24.9 & \textbf{3.08} & 912 \\*
 &  & 0.5 & 6.2-7 & 6.2-7 & 4.3-8 & 1.8-10 & 4.9-8 & \textcolor{red}{4.2-5} & 4.37 & 3.94 & 124 & 29.4 & \textbf{3.04} & \textcolor{red}{977} \\*
 &  & 0.1 & 1.1-7 & 1.1-7 & 2.4-7 & 1.5-7 & 2.5-7 & \textcolor{red}{2.8-3} & 5.32 & 5.31 & 124 & 50.0 & \textbf{3.60} & \textcolor{red}{1029} \\*
 &  & 0.01 & 2.6-8 & 2.6-8 & 1.5-7 & 5.3-8 & 2.6-7 & \textcolor{red}{3.1-2} & 6.03 & 6.02 & 140 & 59.0 & \textbf{3.57} & \textcolor{red}{883} \\
\texttt{housing7} & $10^{-4}$ & 1 & 2.9-12 & 2.9-12 & 1.6-7 & 2.0-7 & 7.5-7 & \textcolor{red}{6.3-3} & 6.86 & 6.92 & 868 & 89.0 & \textbf{4.09} & \textcolor{red}{918} \\*
 &  & 0.5 & 8.4-7 & 8.4-7 & 4.5-8 & 3.2-12 & 5.7-7 & \textcolor{red}{1.1-2} & 7.34 & 7.43 & 739 & 159 & \textbf{4.33} & \textcolor{red}{978} \\*
 &  & 0.1 & 6.9-8 & 6.9-8 & 5.6-8 & 6.8-7 & 6.7-7 & \textcolor{red}{2.0-2} & 7.92 & 7.88 & 400 & 489 & \textbf{4.69} & \textcolor{red}{894} \\*
 &  & 0.01 & 6.4-7 & 6.4-7 & 3.9-8 & \textcolor{red}{--} & 6.1-7 & \textcolor{red}{1.9-2} & 9.31 & 9.43 & 804 & \textcolor{red}{$>$1800} & \textbf{4.85} & \textcolor{red}{874} \\
\midrule
\texttt{mpg7} & $10^{-3}$ & 1 & 8.8-8 & 8.8-8 & 1.9-8 & 6.5-7 & 4.6-8 & 9.0-7 & 0.09 & \textbf{0.08} & 0.58 & 0.42 & 0.45 & 2.66 \\*
 &  & 0.5 & 9.6-9 & 9.6-9 & 3.3-8 & 6.7-13 & 5.9-7 & 1.0-6 & \textbf{0.09} & \textbf{0.09} & 0.59 & 0.42 & 0.48 & 3.15 \\*
 &  & 0.1 & 5.1-7 & 5.1-7 & 5.4-8 & 8.3-8 & 2.4-7 & 9.8-7 & \textbf{0.13} & \textbf{0.13} & 0.65 & 0.40 & 0.50 & 4.16 \\*
 &  & 0.01 & 6.7-8 & 6.7-8 & 2.8-8 & 2.6-9 & 1.3-7 & 1.0-6 & 0.14 & \textbf{0.13} & 0.82 & 0.52 & 0.46 & 4.87 \\
\texttt{mpg7} & $10^{-4}$ & 1 & 1.4-7 & 1.4-7 & 1.1-7 & 1.6-10 & 7.1-7 & 1.0-6 & \textbf{0.20} & \textbf{0.20} & 1.81 & 0.80 & 0.52 & 11.0 \\*
 &  & 0.5 & 7.8-8 & 7.8-8 & 2.8-7 & 9.3-7 & 9.1-7 & 9.9-7 & \textbf{0.20} & 0.22 & 1.95 & 0.97 & 0.62 & 11.8 \\*
 &  & 0.1 & 8.1-8 & 8.1-8 & 2.0-7 & 1.6-7 & 3.4-7 & 9.3-7 & \textbf{0.28} & 0.29 & 2.62 & 1.19 & 0.53 & 14.6 \\*
 &  & 0.01 & 1.9-7 & 1.9-7 & 2.2-7 & 3.2-8 & 1.9-7 & 8.3-7 & \textbf{0.35} & \textbf{0.35} & 3.01 & 1.08 & 0.62 & 15.6 \\
\midrule
\texttt{space\_ga9} & $10^{-3}$ & 1 & 3.6-11 & 3.6-11 & 2.3-7 & 1.6-13 & 3.6-7 & 9.1-7 & 0.26 & \textbf{0.25} & 2.15 & 0.81 & 0.73 & 3.20 \\*
 &  & 0.5 & 1.1-11 & 1.1-11 & 3.6-8 & 1.1-8 & 6.7-7 & 7.6-7 & \textbf{0.24} & 0.27 & 2.64 & 1.19 & 0.64 & 3.65 \\*
 &  & 0.1 & 4.0-9 & 4.0-9 & 7.7-7 & 1.0-8 & 1.1-7 & 8.8-7 & \textbf{0.29} & \textbf{0.29} & 3.69 & 1.24 & 0.92 & 8.88 \\*
 &  & 0.01 & 3.5-11 & 3.5-11 & 1.0-8 & 1.1-9 & 1.6-7 & 9.5-7 & \textbf{0.33} & \textbf{0.33} & 4.36 & 1.44 & 0.85 & 9.72 \\
\texttt{space\_ga9} & $10^{-4}$ & 1 & 2.0-10 & 2.0-10 & 2.5-8 & 2.7-12 & 1.6-7 & 7.9-7 & \textbf{0.72} & 0.75 & 13.2 & 2.63 & 1.15 & 22.3 \\*
 &  & 0.5 & 1.5-8 & 1.5-8 & 2.5-8 & 4.3-8 & 9.6-7 & 6.4-7 & 0.82 & \textbf{0.81} & 11.6 & 4.71 & 0.85 & 21.1 \\*
 &  & 0.1 & 2.8-8 & 2.8-8 & 7.9-8 & 1.3-8 & 7.3-7 & 8.9-7 & 1.16 & 1.23 & 15.4 & 4.31 & \textbf{1.15} & 32.5 \\*
 &  & 0.01 & 2.7-7 & 2.7-7 & 1.7-8 & 8.8-8 & 8.7-8 & 9.4-7 & 1.15 & \textbf{1.10} & 14.5 & 5.98 & 1.48 & 36.9 \\
\end{longtable}
\endgroup